\documentclass[a4paper,11pt]{article}
\usepackage{amsmath,amssymb,amsthm}
\usepackage{wasysym}
\usepackage{mathrsfs}
\usepackage{euscript}
\usepackage{color}
\usepackage[shortlabels]{enumitem}
\usepackage{bm}
\usepackage{charter}
\usepackage{comment}
\usepackage{srcltx}
\usepackage{xcolor}
\usepackage{graphicx}
\usepackage{amsmath}
\usepackage{amssymb}
\usepackage{euscript}
\usepackage{epic,eepic}
\usepackage{pstricks}
\usepackage{parskip}
\usepackage{tikz}
\usepackage{color}
\usepackage{multirow}
\usepackage{float}
\usepackage{multirow, array}
\usepackage{booktabs}
\usepackage{mathrsfs}
\usepackage{mathdots}
\usepackage{stackrel}
\usepackage{relsize}
\usepackage{ dsfont }
\usepackage{listings}
\usepackage{color}
\usepackage{graphicx}
\usepackage{setspace} 
\usepackage[utf8]{inputenc}
\usepackage{xcolor}
\usepackage[USenglish]{babel}
\usepackage{mathtools}
\usepackage{subfigure}
\usepackage{amsmath}
\usepackage{amssymb}
\usepackage{euscript}
\usepackage{epic,eepic}
\usepackage{pstricks}
\usepackage{parskip}
\usepackage{tikz}
\usepackage{multirow}
\usepackage{float}
\usepackage{array}
\usepackage{booktabs}
\usepackage{mathrsfs}
\usepackage{tabularx}
\usepackage{mathdots}
\usepackage{stackrel}
\usepackage{fancybox, calc}
\usepackage{verbatim}
\usepackage{fancyhdr}
\usepackage{tabularx}

\usepackage{enumitem} % Per personalizzare le liste
\usepackage{hyperref} % Per riferimenti ipertestuali
\usepackage{xstring} % Per gestire testi personalizzati nei riferimenti

\definecolor{mygreen}{rgb}{0,0.6,0}
\definecolor{mygray}{rgb}{0.5,0.5,0.5}
\definecolor{mymauve}{rgb}{0.58,0,0.82}
\definecolor{labelkey}{rgb}{0,0.08,0.45}
\definecolor{refkey}{rgb}{0,0.6,0.0}
\definecolor{Brown}{rgb}{0.45,0.0,0.05}
\definecolor{dgreen}{rgb}{0.00,0.49,0.00}
\definecolor{dblue}{rgb}{0,0.08,0.75}
\DeclareSymbolFont{largesymbolsA}{U}{jkpexa}{m}{n}
\SetSymbolFont{largesymbolsA}{bold}{U}{jkpexa}{bx}{n}
\DeclareMathSymbol{\varprod}{\mathop}{largesymbolsA}{16}

\definecolor{labelkey}{rgb}{0,0.08,0.45}
\definecolor{refkey}{rgb}{0,0.6,0.0}
\definecolor{Brown}{rgb}{0.45,0.0,0.05}
\definecolor{dgreen}{rgb}{0.00,0.49,0.00}
\definecolor{dblue}{rgb}{0,0.08,0.75}
\newtheorem{theorem}{Theorem}[section]
\newtheorem{corollary}[theorem]{Corollary}
\newtheorem{lemma}[theorem]{Lemma}

\newtheorem{proposition}[theorem]{Proposition}

\newtheorem{assumption}[theorem]{Assumption}
\theoremstyle{definition}
\newtheorem{definition}[theorem]{Definition}

\newtheorem{remark}[theorem]{Remark}
\newtheorem{example}[theorem]{Example}

\def\acknowledgement{\par\addvspace{17pt}\small\rmfamily
\trivlist\if!\ackname!\item[]\else
\item[\hskip\labelsep
{\bfseries\ackname}]\fi}
\numberwithin{equation}{section}

\providecommand{\norm}[1]{\lVert#1\rVert}

\newcommand{\R}{\ensuremath \mathbb{R}}

\newcommand{\N}{\ensuremath \mathbb{N}}

\DeclareMathOperator*{\dom}{\ensuremath{\text{\rm dom}}}
\DeclareMathOperator*{\argmin}{\text{\rm argmin}}

\newcommand{\customlabel}[1]{%
  A\arabic{enumi}'\protected@write\@auxout{}{%
    \string\newlabel{#1}{{A\arabic{enumi}'}{\thepage}}}}
\makeatother

\newcommand{\theu}{\ensuremath \theta_1}
\newcommand{\thed}{\ensuremath \theta_2}
\newcommand{\W}{\ensuremath W}
\newcommand{\w}{\ensuremath w}

\author{Cristian Vega}
\title{\sffamily Fast iterative regularization by reusing data
\footnote{Contact author: 
Cristian Vega Cere\~no, {\ttfamily cristian.vega.14@sansano.usm.cl},
}}
\author{Cristian Vega$^1$ 
\\[5mm]
\small MaLGa Center - DIMA: Universit`a degli Studi di Genova, Via Dodecaneso 35, Genoa, Italy. \\
}

\date{\ttfamily \today}
\newcommand{\scal}[2]{{\left\langle{{#1}, \ {#2}}\right\rangle}}

\newcommand{\LLL}{\ensuremath{\mathcal L}}

\newcommand{\Id}{\ensuremath{\operatorname{Id}}\,}
\newcommand{\RR}{\ensuremath{\mathbb{R}}}

\newcommand{\ran}{\ensuremath{\operatorname{ran}}}

\newcommand{\NN}{\ensuremath{\mathbb N}}

\newcommand{\vertiii}[1]{{\left\vert\kern-0.25ex\left\vert\kern-0.25ex\left\vert #1 
\right\vert\kern-0.25ex\right\vert\kern-0.25ex\right\vert}}
\newcommand{\normi}{\vert\kern-0.25ex\vert\kern-0.25ex\vert}

\renewcommand{\abstractname}{Abstract}
\ProvideTextCommand{\DJ}{OT1}{\raisebox{0.25ex}{-}\kern-0.4em D}
\renewcommand\theenumi{(\roman{enumi})}
\renewcommand\theenumii{(\alph{enumii})}

\numberwithin{equation}{section}
\usepackage{graphicx}

\def\cL{{\mathcal{L}}}

\def\cR{{\mathcal{R}}}

\newcommand{\be}{\begin{equation}}
\newcommand{\ee}{\end{equation}}
\newcommand{\br}{\begin{remark}}
\newcommand{\er}{\end{remark}}
\newcommand{\bq}{\begin{qu}}
\newcommand{\eq}{\end{qu}}
\newcommand{\bn}{\begin{enumerate}}
\newcommand{\en}{\end{enumerate}}
\newcommand{\bi}{\begin{itemize}}
\newcommand{\ei}{\end{itemize}}
\newcommand{\beas}{\begin{eqnarray*}}
\newcommand{\eeas}{\end{eqnarray*}}
\newcommand{\bea}{\begin{eqnarray}}
\newcommand{\eea}{\end{eqnarray}}

\newcommand{\bb}{\beta}

\newcommand{\Diag}{\operatorname{Diag}}
 \usepackage[backend=biber, style=ieee, citestyle=numeric-comp, sorting=nty]{biblatex}
\usepackage{csquotes}
\usepackage{chngcntr}
\begin{document}

\title{ {\sffamily{
 Optimization by Reparametrization:\\
Time-Warped Mirror Flows with Singular Geometry}}}

\author{
Cristian Vega\thanks{Departamento de Matemática, Universidad Técnica Federico Santa María,
Santiago, Chile ({\tt cristian.vegac@usm.cl}). This work was carried out while the author was at
MaLGa Center, DIMA - Dipartimento di Matematica, Università di Genova, Genoa, Italy.},
Cesare Molinari\thanks{MaLGa Center, DIMA - Dipartimento di Matematica,
Università di Genova, Via Dodecaneso 35, Genoa, Italy
({\tt molinari@dima.unige.it}, {\tt silvia.villa@unige.it})},
Lorenzo Rosasco\thanks{MaLGa Center, DIBRIS, Università di Genova, Genoa, Italy
({\tt lorenzo.rosasco@unige.it})}
\thanks{Istituto Italiano di Tecnologia, Genoa, Italy},
Silvia Villa\footnotemark[2]
}

\date{\ttfamily \today}

\maketitle
\begin{abstract}
Solving data-driven problems requires defining complex models and fitting them to data, neural networks being a motivating example. The fitting procedure can be seen as an optimization problem, which is often non-convex, and hence optimization guarantees are hard to derive. An opportunity is provided by viewing the model of interest as a redundant reparameterization---an overparameterization---of some simpler model for which optimization results are easier to achieve. In this paper, after formalizing the above idea, we revisit some recent results and derive new ones. In particular, we consider the gradient flow of some classes of linear overparameterizations and show that they correspond to suitable mirror flows on the original parameters. Our main contribution relates to the study of the latter, for which we establish well-posedness and convergence. Several specific instances are provided, including fully connected linear networks and networks with weight normalization. The results yield insight into the role of overparameterization in optimization and corresponding implicit regularization properties.
\end{abstract}

\noindent
{\bf\small Keywords.} {\small Overparameterization, Implicit Regularization, Time-Warping Mirror Flow, Fully Connected Normalized Linear Networks, Weight Normalization.}\\[1ex]
\noindent
{\bf\small AMS Mathematics Subject Classification (2020):} {\small 34A12, 34A55, 37N40, 90C26, 65K10, 68T07}
%\tableofcontents

%\maketitle
%\tableofcontents

\maketitle

\section{Introduction}
%%%%%%%%%%%%%%%%%

Data-driven approaches based on machine learning start from a parametric model
$f_\theta$, with $\theta\in\RR^k$. Given data
$\{(x_i,y_i)\}_{i=1}^n$ and a loss function $\ell$, the parameter is chosen by
minimizing the empirical error
$$
    \sum_{i=1}^n \ell(f_\theta(x_i),y_i),
$$
typically using a variant of stochastic gradient descent. The resulting
optimization problem is often nonconvex, and the parameter dynamics
$\theta(t)$ induced by gradient methods can be difficult to characterize.
The parameter trajectory may, however, be of limited interest, since what
matters is  usually the induced trajectory of the model $f_{\theta(t)}$.
Unfortunately, characterizing this trajectory is still difficult in general.
A starting point is the simpler setting in which $f_\theta$ is linear in the
input. More precisely, assume that there exists a surjective map
$q:\RR^k\to\RR^p$ such that

$$
    f_\theta(x)=\langle q(\theta),x\rangle.
$$

In this case, the model can equivalently be parameterized directly by
$\bb\in\RR^p$ as $f_\bb(x)=\langle\bb,x\rangle$; the map $q$ then
reparameterizes $\bb$ in terms of $\theta$. When $k>p$, or more generally when
$q$ is not injective, the reparameterization is redundant and hence defines an
overparameterization of the original model.

Parameter fitting can therefore be formulated in the original variable as
\begin{equation}
\label{probl_original}
\min_{\bb\in\RR^p}\cL(\bb),
\qquad
\cL(\bb):=\sum_{i=1}^n\ell(\langle\bb,x_i\rangle,y_i),
\end{equation}
or, using the reparameterization, as
\begin{equation}
\label{probl_ex}
\min_{\theta\in\RR^k}\cL(q(\theta)).
\end{equation}
Optimization by gradient methods can be idealized by considering gradient flow
on \eqref{probl_ex},
\begin{equation}
\label{eqqq}
\dot\theta(t)=-\nabla_\theta\cL(q(\theta(t))),
\end{equation}
for a suitable initialization. Studying the induced trajectory
$\bb(t)=q(\theta(t))$ in the original variable raises two key questions:

\begin{itemize}
\item Can the trajectory $\bb(t)$ be characterized? In particular, does it
converge to some $\bb_\infty\in\mathcal S$, where
\(\mathcal S:=\argmin_{\bb\in\RR^p}\cL(\bb)\)?
\item Is there a bias functional $\cR:\RR^p\to\RR$ such that
$\bb_\infty$ solves
\(\min_{\bb\in\mathcal S}\cR(\bb)\)?
If so, can $\cR$ be expressed explicitly in terms of the
reparameterization map $q$ and the initialization?
\end{itemize}

To place these questions in context, the relation between
\eqref{probl_ex} and \eqref{probl_original} can be read in two directions.

Starting from an overparameterized model, passing to the original
variable $\bb=q(\theta)$ allows one to study the optimization dynamics and
implicit bias induced by optimization in $\theta$
\cite{amid2020reparameterizing,li2022implicit,azulay2021implicit}. In
classical algorithm design, a model is combined with an explicit bias
reflecting prior knowledge about the problem \cite{hastie2001elements}. For
linear models, this bias can be encoded by a functional $\cR(\bb)$ on the
model parameters, such as a norm. By contrast, overparameterized models can
achieve excellent learning performance without an explicit regularizer. This
apparent puzzle can be understood by examining the interplay between the
reparameterization and the optimization procedure. Indeed, several examples
show that these two choices jointly induce an implicit bias on the original
parameters. This perspective has been considered in a number of recent works
\cite{gunasekar2018characterizing,gunasekar2018implicit,li2022implicit,
moroshko2020implicit,nacson2022implicit,neyshabur2014search,
vaskevicius2019implicit,woodworth2020kernel,arora2019implicit,
azulay2021implicit}.

Conversely, starting from a problem in the variable $\bb$, one can introduce a
map $q$ so that a standard algorithm applied to \eqref{probl_ex} induces more
favorable dynamics on the original problem. This perspective is useful beyond
machine learning: reparameterization can turn constrained problems into
unconstrained ones \cite{chou2022non}, improve the constants in convergence rates
\cite{labarriere2024optimization}, or replace nonsmooth problems by smooth ones
\cite{Poon2023}.

In both perspectives, the key idea is to relate gradient flow on the
reparameterized problem to mirror flow on the original one. By the chain
rule, omitting the dependence on time for readability,
$$
    \dot\bb
    =J_q(\theta)\dot\theta
    =-J_q(\theta)\nabla_\theta\cL(q(\theta))
    =-J_q(\theta)J_q(\theta)^\top\nabla\cL(\bb),
$$
where $J_q$ denotes the Jacobian of $q$. Suppose that there exists a function
$F:\RR^p\to\RR$ such that, for every $\theta\in\RR^k$,
\begin{equation}
\label{eq:jac}
J_q(\theta)J_q(\theta)^\top
=\left[\nabla^2F(q(\theta))\right]^{-1}.
\end{equation}
Then $\bb$ follows the mirror-flow dynamics
\begin{equation}
\label{eq:MF0}
\dot\bb=-\left[\nabla^2F(\bb)\right]^{-1}\nabla\cL(\bb).
\end{equation}
Thus, gradient flow in $\theta$ is equivalent to mirror flow in $\bb$, with a
geometry determined by the reparameterization through the mirror function
$F$. The corresponding implicit bias is therefore determined jointly by the
algorithm, the map $q$, and the initialization.

A general construction realizing continuous mirror descent as
gradient descent after reparameterization was given in
\cite[Theorem 1]{amid2020reparameterizing}. The converse perspective was
developed in \cite{li2022implicit}, which characterizes the equivalence for
commuting parameterizations. This framework applies, in particular, to
diagonal linear networks, for which the induced bias is related to sparsity in
the original linear model
\cite{chou2023more,woodworth2020kernel,zhao2019implicit}. Component-wise
reparameterizations are studied systematically in \cite{chou2023induce}; in
the appendix, we also give explicit computations of $F$ for two examples from
\cite{chou2023more,li2022implicit} (see Examples \ref{Ex: uL} and
\ref{Ex: UV}).

The mirror-flow representation is nevertheless restrictive. To obtain
\eqref{eq:MF0}, one must first express
$J_q(\theta)J_q(\theta)^\top$ solely in terms of the global variable
$\bb=q(\theta)$ and then find a function $F$ whose inverse Hessian equals the
resulting matrix field. For some relevant reparameterizations, such an $F$ does not
exist. The following example, introduced in \cite{azulay2021implicit}, is used throughout the paper, studied in detail
in Section \ref{SS: AFCLN} and illustrates this obstruction.

\begin{example}[Multi-neuron fully connected linear network of depth 2]
\label{Ex: MV}
Consider $\bb=q(\theta)$ with
$\theta=(\W,\w)\in\RR^{p\times m}\times\RR^m$ and
$q(\theta)=\W\w$. Gradient flow on the reparameterized problem is
\begin{align}
\label{eq:multineuron}
\dot\W(t)
&=-\nabla_\W\cL(\W(t)\w(t))
=-\nabla\cL(\bb(t))\w(t)^\top,\\
\dot\w(t)
&=-\nabla_\w\cL(\W(t)\w(t))
=-\W(t)^\top\nabla\cL(\bb(t)).\nonumber
\end{align}
Assume, for simplicity, that the initialization satisfies
$\W(0)^\top\W(0)=\w(0)\w(0)^\top.$

By the Poincar\'e lemma (see Lemma \ref{poincare}), there is no function $F$
satisfying \eqref{eq:jac}. Nevertheless, as observed in
\cite{azulay2021implicit}, the induced dynamics in the original variable can
be written as
\begin{equation}
\label{eq: Uvanot0 Flow H1}
\left[\nabla^2F(\bb(t))\right]\dot\bb(t)
=-G(\bb(t))\nabla\cL(\bb(t)),
\end{equation}
where
\begin{equation}
\label{eq:F}
F(\bb)=\frac{2}{3}\|\bb\|^{3/2},
\qquad
G(\bb)=\|\bb\|^{1/2}.
\end{equation}
See Section \ref{SS: AFCLN} for a proof.
\end{example}

Equation \eqref{eq: Uvanot0 Flow H1} differs from standard mirror flow in two
important respects. First, it contains the state-dependent time-warping factor
$G$. Second, the mirror function in \eqref{eq:F} is not twice differentiable at
the origin. The global well-posedness requires showing that
the trajectory remains away from this singular point. So, since $G$ is scalar and positive away from the origin, it changes only the
speed along the mirror-flow trajectory. This prevent to simply apply classical results on mirror flow \cite{alvarez2004hessian}.
 Time changes of Hessian flows have also been studied as
conformal mirror descent \cite{kainth2022conformal}, while classical
well-posedness and convergence results for Hessian Riemannian flows are given
in \cite{alvarez2004hessian}. These results do not directly cover the interior
singularity and reparameterization-induced time factors considered here.
Motivated by this example, we study the generalized mirror flow (GMF)
\eqref{eq: Uvanot0 Flow H1}. Our main contributions are as follows.

\begin{itemize}
\item We establish a general framework for generalized mirror
flow under assumptions that allow the Hessian of the mirror function to be
undefined at one point. We prove global existence and uniqueness, show that
the loss converges to its global minimum with a last-iterate rate
$\mathcal O(1/t)$, and prove convergence of the entire trajectory.

\item For a general convex loss, we give a variational
characterization of the selected minimizer. When the loss is the composition
of a convex function and a linear operator and its minimizer set is affine,
the limit is the Bregman projection of the initialization onto that set,
yielding an explicit characterization of the implicit bias.

\item We apply the framework to reparameterizations for which
the classical mirror-flow equivalence fails in the original time scale. For
depth-two fully connected linear networks, we recover the
time-warped representation and implicit-bias functional identified in
\cite{azulay2021implicit} and place them under global well-posedness and
convergence guarantees. For radial weight normalization, we characterize the
relation between the norm reparameterization and the induced mirror geometry,
recover polynomial and exponential schemes as special cases, and analyze a
combined weight-normalized fully connected construction
\cite{morwani2022inductive,salimans2016weight,wu2020implicit,
chou2023robust}.
\end{itemize}

The paper is organized as follows.  In Section \ref{S: Main result}, we study
the well-posedness and convergence properties of the generalized mirror flow
in \eqref{eq: Uvanot0 Flow H1}. In Section \ref{SS: AFCLN}, we apply these
results to depth-two fully connected linear networks. Section \ref{Sec: WN}
considers radial mirror functions and time-warping factors, leading to a general
analysis of weight-normalization schemes and their implicit bias. Section
\ref{sec:conclusion} concludes the paper. The appendices collect the notation,
preliminary results, and proofs, and extend the analysis to
Legendre mirror functions defined on proper subsets of $\RR^p$.

Before developing the analysis, we discuss the closest related
work and clarify the scope of our contributions.

\subsection{Related work}\label{sec:relwork}
%%%%%%%%%%%%%%%%%%%%%%%%%%%%%
We review the works closest to the two aspects of our
contribution: the representation of reparameterized gradient flow as 
intrinsic dynamics in the original variable, and the implicit bias induced by
specific linear overparameterizations.

\noindent\textbf{Mirror-flow representations and intrinsic dynamics.}
Amid and Warmuth \cite{amid2020reparameterizing} give a general construction
that realizes continuous mirror descent as gradient descent after a change of
parameters. Li et al.\ \cite{li2022implicit} characterize the converse
correspondence for commuting parameterizations and establish convergence and
implicit-bias results in that setting. Further developments for linear and
component-wise parameterizations include
\cite{chou2023more,chou2023induce,chou2022non,jacobs2025mask,
pesme2021implicit}.
Existence and convergence of Hessian Riemannian gradient flows are studied in
the classical work \cite{alvarez2004hessian}. State-dependent time changes of
such flows also appear in conformal mirror descent
\cite{kainth2022conformal}, where the conformal factor is induced by
logarithmic-divergence geometry. More broadly, the question of when gradient
flow in a high-dimensional parameter induces an intrinsic lower-dimensional
dynamics is studied in \cite{marcotte2025intrinsic}, building on the
conservation-law framework of \cite{marcotte2023abide}. This intrinsic
Riemannian viewpoint is geometrically more general than the conformally
Hessian dynamics considered here.

Our analysis concerns reparameterizations for which the induced dynamics are
not a standard mirror flow in the original time scale. For the matrix-times-vector model studied in Section \ref{SS: AFCLN}, the inverse
metric fails the Hessian integrability condition but becomes a Hessian after
multiplication by a scalar state-dependent factor, as observed in
\cite{azulay2021implicit}. We provide  global well-posedness and convergence
analysis for the resulting time-warped flow when the mirror function may fail
to be twice differentiable at an interior point.

\noindent\textbf{Diagonal linear networks.}
Diagonal linear networks provide the simplest setting in which
reparameterized gradient flow induces a non-Euclidean geometry and a
sparsity-promoting bias. For least squares, the dependence of this bias on
depth and initialization is studied in
\cite{zhao2019implicit,woodworth2020kernel,chou2023more,
labarriere2024optimization}; in the small-initialization regime, the selected
solution approaches a minimum-$\ell_1$ solution, at the cost of slower
optimization. Component-wise reparameterizations and the inverse problem of
designing a reparameterization for a prescribed regularizer are treated
systematically in \cite{chou2023induce}. Stochastic gradient methods can
produce a stronger sparsity bias than their deterministic counterparts
\cite{ali2020implicit,pesme2021implicit}. Related geometric analyses for deep
convolutional networks are given in \cite{achour2025riemannian}.

\noindent\textbf{Fully connected linear networks.}
The matrix-times-vector parameterization underlying a depth-two fully
connected linear network has been studied from several complementary
perspectives. Azulay et al.\ \cite{azulay2021implicit} introduce the
time-warped mirror-flow representation used here and derive the corresponding
implicit-bias functional conditionally on convergence to an interpolating
solution. Min et al.\ \cite{min2021explicit} study the role of initialization
imbalance under square loss, proving exponential loss convergence and
minimum-norm bias under suitable initialization conditions. Varre et al.\
\cite{varre2023spectral} establish convergence and a variational
characterization for a particular initialization shape and expose a mirror
flow governing the singular values. Kunin et al.\ \cite{kunin2024rich}
extend time-warped implicit-bias calculations to unbalanced, wide, and deep
linear networks, again under trajectory-avoidance and interpolation
conditions. For the balanced multi-neuron model considered here, our general
theory yields global well-posedness and convergence for convex losses
satisfying our assumptions and recovers the bias of
\cite{azulay2021implicit} as an exact Bregman projection in the
affine-composite case.

\noindent\textbf{Weight normalization.}
Weight normalization separates the magnitude and direction of a parameter
\cite{salimans2016weight}. For overparameterized least squares, Wu et al.\
\cite{wu2020implicit} analyze standard weight normalization in continuous and
discrete time and establish convergence and minimum-norm or
near-minimum-norm guarantees in different regimes. Standard and exponential
weight normalization for homogeneous classifiers are studied in
\cite{morwani2022inductive}, while \cite{chou2023robust} combines weight
normalization with a diagonal linear model to obtain a robust sparsity bias.
Our results complement these works: we characterize a general class of radial
norm reparameterizations in continuous time, identify the compatibility
relation between the scale map and the induced mirror geometry, and obtain an
exact Bregman-projection bias for affine-composite losses. Polynomial and
exponential weight normalization arise as special cases, and the framework
also covers their combination with the fully connected factorization.

\noindent\textbf{Classification and homogeneous models.}
Implicit bias has also been studied extensively for losses whose infimum is
not attained at a finite parameter. For linearly separable classification,
gradient descent on logistic or exponential losses converges in direction to
the Euclidean maximum-margin classifier
\cite{soudry2018implicit,nacson2019convergence,ji2018gradient,
ji2019implicit}; stochastic-gradient extensions are given in
\cite{nacson2019stochastic}. Corresponding results for diagonal linear
networks, linear convolutional networks, and more general homogeneous models
appear in \cite{moroshko2020implicit,gunasekar2018implicit,lyu2019gradient}.
The infinitely wide two-layer ReLU setting is studied in
\cite{chizat2020implicit}, and extensions involving weight normalization and
mirror flow are developed in
\cite{morwani2022inductive,pesme2024implicitbiasmirrorflow,
jacobs2026implicitbiasmirrorflow}. These works concern directional or
maximum-margin limits, whereas our results address convergence to finite
minimizers of a convex loss.

\noindent\textbf{Matrix factorizations and deep linear models.}
Related implicit-bias phenomena arise when a matrix is represented as a
product of factors. For commuting positive-semidefinite sensing matrices and
small initialization, gradient flow can select minimum-nuclear-norm solutions
\cite{gunasekar2017implicit,li2018algorithmic}; deeper factorizations and
their spectral bias are studied in \cite{arora2019implicit}. Results under
restricted-isometry assumptions appear in
\cite{wu2020continuous,vaskevicius2019implicit,you2020robust}, while
mirror-descent and tensor formulations are considered in
\cite{wu2021implicit,wang2021large,yun2020unifying}. Further optimization and
geometric analyses of deep linear networks include
\cite{arora2018optimization,maly2021robust,gidel2019implicit,
gissin2019implicit,menon2023geometry}.

\section{Global existence and asymptotic analysis} 
\label{S: Main result}
In the same setting of the Introduction, given a differentiable convex loss function $\mathcal{L}: \ \RR^p \to \R$, $\bb\mapsto \mathcal{L}(\bb)$, we consider the trajectory $\bb(t)$ generated by the gradient flow dynamics on a reparameterization $\bb=q(\theta)$. We analyze the case in which the gradient flow on the reparameterization $\theta$ may not be expressed as a mirror flow on the original variable $\bb$, but it can still be written as a dynamical system involving the inverse of the Hessian of $F$ and a positive function $G$, as in \eqref{eq: Uvanot0 Flow H1}:
\begin{equation}
\tag{GMF}
\label{eq: x(t)flow}
\begin{cases}
\left [\nabla^{2}F(\bb(t))\right]\dot{\bb}(t)=-G(\bb(t))
\nabla\mathcal{L}(\bb(t)), \quad\quad  t>0\\
\bb(0)=\bb_0\in \RR^p.
\end{cases}
\end{equation}
In the case when $G(\bb)\equiv 1$, the dynamical system \eqref{eq: x(t)flow} is indeed a mirror flow.  
\begin{remark}[Relation to conformal mirror descent]
On any interval where $G(\bb(t))>0$, the time change
\(
    s(t)=\int_0^t G(\bb(u))\,du
\)
transforms \eqref{eq: x(t)flow} into the mirror flow
\[
    \frac{d\widetilde{\bb}}{ds}
    =-\left[\nabla^2F(\widetilde{\bb})\right]^{-1}
      \nabla\mathcal{L}(\widetilde{\bb}),
    \qquad \widetilde{\bb}(s(t))=\bb(t).
\]
This connection is established for conformal mirror descent
in \cite[Theorem~2]{kainth2022conformal}
under suitable assumptions on the relation between $F$ and $G$
that we do not require. In our case, this point of view does not seem advantageous with respect to a direct analysis of \eqref{eq: x(t)flow}.
\end{remark}

We present a comprehensive framework to study the existence and uniqueness of the trajectory of \eqref{eq: x(t)flow} and its minimization properties, such as the decrease of the loss' values along it and the corresponding convergence rates (see Theorems \ref{T: Existence} and \ref{T: Main result}). Moreover, we show the convergence of the trajectory to $\mathcal{S}=\argmin \mathcal{L}$ and we explicitly compute its implicit bias towards a specific solution in $\mathcal{S}$ (see Theorem \ref{T: Main result}). 
With respect to mirror flow, our analysis shows that for the system \eqref{eq: x(t)flow} the function $G$ does not affect the decrease of the loss in time, and the convergence of the trajectory. 

Our analysis generalizes previous works in different directions: first, 
we prove the well-posedness and the convergence over time of the trajectory $\bb(t)$ of the dynamical system \eqref{eq: x(t)flow}, while in \cite{azulay2021implicit,arora2018optimization,li2022implicit} these properties are assumed a priori. Additionally, we derive the implicit bias induced by reparameterization for a general convex loss function $\mathcal{L}$, while in previous work only the least squares loss function has been considered. For the least squares loss, we recover that the implicit bias $\mathcal{R}$ is the Bregman divergence of the entropy $F$ as in {\color{green}\cite{amid2020reparameterizing,li2022implicit}} (see Corollary~\ref{Remark: Saving energy}). \\ \\

\subsection{Well-posedness}
In this section we show that the system \eqref{eq: x(t)flow} is well-posed by proving the existence of a unique solution that is defined in the entire time-interval $[0, +\infty)$. To do so, we first list the assumptions that we require on the loss function $\mathcal{L}$, the initialization $\bb(0)=\bb_0$, the mirror map $F$ and the time-warping function $G$. These hypotheses are also useful to verify, in Theorem \ref{T: Main result}, the convergence of the global variable $\bb(t)$ to a specific minimizer of the loss function. Define
\[
\mathcal{S}:=\argmin_{\bb \in \R^p}\mathcal{L}(\bb) \quad \quad \text{and} \quad \quad \mathcal{L}^*:=\min_{\bb \in \R^p}\mathcal{L}(\bb).
\]
\\
\begin{assumption}\label{Assumption}\ \\
\ \\
Let $b\in\RR^p$. 
\begin{enumerate}[label=A\arabic*]
\item\label{A: D1} The function $\mathcal{L}\colon \ \RR^{p}\to\RR$ is convex, differentiable and $\nabla \LLL$ is locally Lipschitz continuous. Moreover, $\mathcal{S} \neq \emptyset$ and $b \notin \mathcal{S}$.
\item\label{A: D3} The initialization satisfies $\mathcal{L}(\bb_0) < \mathcal{L}(b)$. 
\item\label{A: D2} The function $F\colon \RR^p \to \RR$ is strictly-convex and twice-differentiable on $\RR^{p}\setminus\left\{b\right\}$. For every $\bb \in \RR^{p}\setminus\left\{b\right\}$, $\nabla^{2}F(\bb)$ is invertible and the map $\bb\in\RR^{p}\setminus\left\{b\right\} \mapsto (\nabla^{2}F(\bb))^{-1}$ is locally Lipschitz continuous. Moreover, the function $G: \ \RR^{p}\setminus\left\{b\right\} \to \RR_{+}$ is locally Lipschitz continuous. For every radius $R>0$ there exists a constant $C(R)>0$ such that, for all $\bb\not\in\mathbb{B}(b,R)$, $G(\bb)\geq C(R)$.
\item\label{A: D4} There exists $\bar{z}\in \mathcal{S}$ such that the function $D_{F}(\bar{z},\cdot)$ is coercive, where $D_{F}$ is the Bregman divergence associated with $F$ (see \cite{bregman1966relaxation}), defined as
\begin{align}
    D_{F}: \ \RR^{p}\times\RR^{p}\setminus\left\{b\right\} \to \RR, \quad D_F(y,\bb)= F(y)-F(\bb)-\scal{\nabla F(\bb)}{y-\bb}.
  \label{Def:BDA}
\end{align}
\end{enumerate}
\end{assumption}\vspace{4mm}

\begin{remark} Assumption~\ref{A: D3} implies that $\bb_0\neq b$ and $b\notin \mathcal{S}$, therefore $\nabla\mathcal{L}(b)\neq 0$.
\end{remark}

\begin{remark} Assumption~\ref{Assumption} is key to establish the well-posedness and the convergence analysis of the flow given by \eqref{eq: x(t)flow}. 
 Assumptions~\ref{A: D2} and ~\ref{A: D1} are standard in the classical theory of local existence and uniqueness of solutions of ordinary differential equations \cite[Subsection 1.2]{barreira2012ordinary}. 
We require convexity of the loss function $\mathcal{L}$, which is important to derive global convergence results. A similar equation has been considered under different assumptions  in \cite[Section 3.2]{alvarez2004hessian}. See also Section~\ref{sec:legendre}.
As we will see Assumptions~\ref{A: D3} ensures a suitable initialization for the flow, which avoids convergence of the solution towards $b$. 
% Condition~\ref{A: D3} is the generalization of the one presented in \cite[Lemma 2.3]{wu2020implicit} for least squares problems.  
Assumption~\ref{A: D4} is crucial to establish boundedness of the trajectory, which is an intermediate step for proving convergence. 
\end{remark} The following theorem establishes the existence and uniqueness of the global solution of the dynamical system \eqref{eq: x(t)flow}.
\begin{theorem}\label{T: Existence}
Under Assumption \ref{Assumption}, the dynamical system \eqref{eq: x(t)flow} has a unique solution $\bb$ defined on $[0,+\infty)$.
Moreover, for every ${z}\in \mathcal{S}$, both the Bregman divergence $D_{F}(z,\bb(t))$ and $\mathcal{L}(\bb(t))$ are non-increasing functions of $t\geq 0$.
\end{theorem}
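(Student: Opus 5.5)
The plan is to combine local Cauchy--Lipschitz theory on the open set $\Omega:=\RR^p\setminus\{b\}$ with two Lyapunov functions, $\mathcal{L}$ and $D_F(z,\cdot)$. These Lyapunov functions show that the maximal solution can neither blow up nor approach the singular point $b$.

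\textbf{Local existence.} On $\Omega$ we rewrite \eqref{eq: x(t)flow} as
\[
\dot\bb=V(\bb):=-G(\bb)\,[\nabla^2F(\bb)]^{-1}\nabla\mathcal{L}(\bb).
\]
By \ref{A: D1} and \ref{A: D2}, $V$ is a product of locally Lipschitz maps on $\Omega$, hence locally Lipschitz there. Since $\bb_0\in\Omega$ by \ref{A: D3}, the Picard--Lindel\"of theorem gives a unique maximal solution $\bb\colon[0,T_{\max})\to\Omega$. The standard continuation criterion then says: if $T_{\max}<\infty$, the trajectory eventually leaves every compact subset of $\Omega$. Concretely, either $\|\bb(t)\|\to\infty$ along a sequence, or $\bb(t)$ gets arbitrarily close to $b$.

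\textbf{Monotonicity on $[0,T_{\max})$.} For the loss, the chain rule gives
\[
\tfrac{d}{dt}\mathcal{L}(\bb)=-G(\bb)\langle\nabla\mathcal{L},[\nabla^2F]^{-1}\nabla\mathcal{L}\rangle\le 0 .
\]
Here we use that $\nabla^2F(\bb)$ is positive semidefinite and invertible, hence positive definite, and so is its inverse; strict convexity and twice differentiability give semidefiniteness, and invertibility upgrades it. For the Bregman divergence, fix $z\in\mathcal{S}$. Since $F$ is $C^2$ on $\Omega$,
\[
\tfrac{d}{dt}D_F(z,\bb)=-\langle\nabla^2F(\bb)\dot\bb,\,z-\bb\rangle=G(\bb)\langle\nabla\mathcal{L}(\bb),z-\bb\rangle\le G(\bb)\bigl(\mathcal{L}^*-\mathcal{L}(\bb)\bigr)\le0,
\]
using convexity of $\mathcal{L}$ and $G>0$. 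Both monotonicity claims therefore hold on the maximal interval, and they will extend to $[0,\infty)$ once $T_{\max}=\infty$.

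\textbf{Ruling out finite-time escape.} Taking $z=\bar z$ from \ref{A: D4}, we get $D_F(\bar z,\bb(t))\le D_F(\bar z,\bb_0)$ for all $t$. Coercivity then confines the trajectory to a bounded sublevel set, which excludes blow-up. For the singular point, monotonicity of the loss gives $\mathcal{L}(\bb(t))\le\mathcal{L}(\bb_0)<\mathcal{L}(b)$. Since $\mathcal{L}$ is continuous, the set $\{\mathcal{L}\le\mathcal{L}(\bb_0)\}$ is closed and does not contain $b$, so its distance $\delta$ to $b$ is positive. Hence $\|\bb(t)-b\|\ge\delta$ for all $t$. The trajectory therefore stays in the compact set
\[
K=\{D_F(\bar z,\cdot)\le D_F(\bar z,\bb_0)\}\cap\{\mathcal{L}\le\mathcal{L}(\bb_0)\}\subset\Omega,
\]
which contradicts $T_{\max}<\infty$. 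Thus the solution is global, unique, and the monotonicity statements hold on $[0,+\infty)$.

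\textbf{Main obstacle.} The delicate point is the compactness of $K$ inside $\Omega$. Closedness of the sublevel set of $D_F(\bar z,\cdot)$ requires care, because $D_F(\bar z,\cdot)$ is only defined on $\Omega$ and $\nabla F$ may be unbounded near $b$. I would avoid this issue by working with the closed set $\{\mathcal{L}\le\mathcal{L}(\bb_0)\}\cap\overline{\mathbb{B}}(0,R)$, where $R$ bounds the Bregman sublevel set by coercivity. This set is compact, contained in $\Omega$, and contains the whole trajectory. It is also where $G\ge C(\delta)>0$, which will be needed later for the rates.
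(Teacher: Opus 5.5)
Your proof is correct and follows essentially the same route as the paper. Both use Picard--Lindel\"of on $\RR^p\setminus\{b\}$, monotonicity of $\mathcal{L}$ and $D_F(z,\cdot)$ along the trajectory (Lemma~\ref{L: Loss-Decreasing}), boundedness from coercivity of $D_F(\bar z,\cdot)$, separation from $b$ via the sublevel set of the loss, and the continuation criterion. The one difference is how the separation from $b$ is obtained: the paper uses the gradient inequality at $b$ to get the explicit radius $r=\frac{\mathcal{L}(b)-\mathcal{L}(\bb_0)}{2\|\nabla\mathcal{L}(b)\|}$ (Lemma~\ref{L: Bounded away x(t)}), and it reuses this $r$ for the constant $C(r)$ in the rate of Theorem~\ref{T: Main result}. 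Your topological distance $\delta$ also works but is not explicit, though it satisfies $\delta\ge r$.
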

 The proof is given in the Appendix; see Subsection \ref{Append: Well-posedness}. Theorem~\ref{T: Existence} provides sufficient conditions to ensure the well-posedness of the system on $[0,+\infty)$. This is fundamental to guarantee that the implicit bias characterization is meaningful. % Indeed, it is well-known that the trajectory of a differential equation exists globally if and only if it is locally bounded. 
 Indeed, if the solution is defined on a bounded interval and not on $[0,+\infty)$, there is a finite time for which the norm of the trajectory diverges. In this case, the limit of the trajectory for that finite time cannot have any interesting properties since it clearly cannot converge to some specific minimizer of $\mathcal{L}$. Our result generalizes the ones in \cite{chou2023induce} and \cite{chou2023robust}, which hold for specific examples. 

 % Indeed,  Theorem \ref{T: Existence} and Theorem \ref{T: Main result} would still hold replacing Assumption~\ref{A: D4}  by boundedness of the trajectory.  Observe that the coercivity hypothesis could be replaced for instance by the boundedness of the level set of the loss function. 
 
 % the authors show the existence of solution only in the specific case $G \equiv 1$ and $q$ separable. Similarly,  demonstrates well-posedness for weight normalization by modifying the gradient flow of the reparameterization to achieve a preconditioned gradient flow over the global variable. But none of them study time-warping mirror flow. 
 \subsection{Minimization properties and implicit bias}
 The next theorem illustrates the minimization properties of the solution of the dynamical system \eqref{eq: x(t)flow} for $t\to+\infty$. It states the convergence rates for the loss function and the convergence of the trajectory towards a specific minimizer of the loss, characterized by an implicit bias. 
 
\begin{theorem}\label{T: Main result}
Assume that Assumption \ref{Assumption} holds and that $\bb$ is the unique solution of the dynamical system \eqref{eq: x(t)flow}. Then $\mathcal{L}(\bb(t))$ is non-increasing and $\mathcal{L}(\bb(t))\to \mathcal{L}^*$ for $t\to+\infty$. In addition, $\left[\mathcal{L}(\bb(t))-\mathcal{L}^{*}\right]\in L^{1}([0,+\infty))$, and, for every ${z}\in \mathcal{S}$ and $t > 0$,
\begin{align*}
  \LLL(\bb(t))-\LLL^{*} \quad \leq \quad \frac{D_{F}(z,\bb_0)}{C(r) t },
\end{align*}
where $C$ is the function introduced in Assumption \ref{A: D2} and $r:=\frac{\mathcal{L}(b)-\mathcal{L}(\bb_0)}{2 \|\nabla \mathcal{L}(b)\|}$.
Moreover, $\lim_{t\rightarrow +\infty}\bb(t)=\bb_{\infty}$ where
\begin{align}\label{minbreg1}
\left\{\bb_{\infty}\right\} \ = \ \argmin\limits_{z\in\mathcal{S}} \left\{ D_{F}(z,\bb_0)+\int_{0}^{+\infty}G(\bb(t))\nabla\mathcal{L}(\bb(t)), z \big\rangle dt\right \}.
\end{align}
\end{theorem}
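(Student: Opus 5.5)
Since Theorem~\ref{T: Existence} provides a unique global solution, the plan is to run a Lyapunov argument on the Bregman divergence, in the same way as for classical mirror flow. The factor $G$ will only enter through a uniform lower bound along the trajectory. Throughout, we use $\frac{d}{dt}\nabla F(\bb(t))=\nabla^2F(\bb(t))\dot\bb(t)=-G(\bb(t))\nabla\LLL(\bb(t))$, which is valid because $\bb(t)\neq b$.

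\textbf{Step 1: keep the trajectory away from $b$.} By Theorem~\ref{T: Existence}, $\LLL(\bb(t))\le\LLL(\bb_0)<\LLL(b)$ for all $t$. Since $\nabla\LLL$ is continuous, there is a neighbourhood of $b$ on which $\LLL$ stays above $\LLL(\bb_0)$. To obtain the explicit radius $r$, I would use the convexity bound
\[
\LLL(\bb)\ge \LLL(b)+\scal{\nabla\LLL(b)}{\bb-b}\ge \LLL(b)-\|\nabla\LLL(b)\|\,\|\bb-b\|.
\]
For $\|\bb-b\|\le r=\frac{\LLL(b)-\LLL(\bb_0)}{2\|\nabla\LLL(b)\|}$ this gives $\LLL(\bb)\ge \frac{\LLL(b)+\LLL(\bb_0)}{2}>\LLL(\bb_0)$. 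Hence $\bb(t)\notin\mathbb{B}(b,r)$ for all $t$, and Assumption~\ref{A: D2} yields $G(\bb(t))\ge C(r)$.

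\textbf{Step 2: rate and integrability.} For $z\in\mathcal S$, differentiate $D_F(z,\bb(t))$:
\[
\tfrac{d}{dt}D_F(z,\bb(t))=-\scal{\tfrac{d}{dt}\nabla F(\bb)}{z-\bb}=G(\bb)\scal{\nabla\LLL(\bb)}{z-\bb}\le -G(\bb)\big(\LLL(\bb)-\LLL^*\big).
\]
The last inequality is convexity of $\LLL$. With Step 1 this becomes $\frac{d}{dt}D_F\le -C(r)(\LLL(\bb)-\LLL^*)$. Integrating on $[0,t]$ gives
\[
C(r)\int_0^t(\LLL(\bb(s))-\LLL^*)\,ds\le D_F(z,\bb_0),
\]
which is the $L^1$ claim. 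Since $\LLL(\bb(\cdot))$ is non-increasing, $t(\LLL(\bb(t))-\LLL^*)\le\int_0^t(\LLL(\bb(s))-\LLL^*)\,ds$, which gives the $O(1/t)$ bound and $\LLL(\bb(t))\to\LLL^*$. The same computation also shows that $\int_0^\infty G(\bb)\scal{\nabla\LLL(\bb)}{\bb-z}\,dt<\infty$.

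\textbf{Step 3: convergence of the trajectory.} This is an Opial-type argument and is the main obstacle. Boundedness follows from Assumption~\ref{A: D4}, because $D_F(\bar z,\bb(t))\le D_F(\bar z,\bb_0)$ and $D_F(\bar z,\cdot)$ is coercive. Cluster points lie in $\mathcal S$ by lower semicontinuity, they avoid $\mathbb{B}(b,r)$, and $D_F(z,\bb(t))$ converges for each $z\in\mathcal S$. The difficulty is that the standard proof needs $D_F(\cdot,\cdot)$ to behave well near cluster points and $\nabla F$ to be continuous there. Both hold because cluster points $\bb^*$ satisfy $\bb^*\neq b$, and $F$ is $C^2$ away from $b$. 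Given two cluster points $z_1,z_2$, I would use the three-point identity
\[
D_F(z_1,\bb)-D_F(z_2,\bb)=F(z_1)-F(z_2)-\scal{\nabla F(\bb)}{z_1-z_2}.
\]
Its left side converges along the whole trajectory. Evaluating the limit along subsequences tending to $z_1$ and to $z_2$ gives $D_F(z_1,z_2)+D_F(z_2,z_1)=0$, and strict convexity then forces $z_1=z_2$.

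\textbf{Step 4: the variational characterization.} Integrating $\frac{d}{dt}\nabla F(\bb)=-G\nabla\LLL$ gives
\[
\nabla F(\bb_\infty)=\nabla F(\bb_0)-\int_0^\infty G(\bb)\nabla\LLL(\bb)\,dt.
\]
The integral converges since $\nabla F(\bb(t))\to\nabla F(\bb_\infty)$ by continuity away from $b$. The objective in \eqref{minbreg1} is strictly convex in $z$, and its gradient is $\nabla F(z)-\nabla F(\bb_0)+\int_0^\infty G\nabla\LLL\,dt$. This gradient vanishes at $z=\bb_\infty\in\mathcal S$. One subtle point is when $b\in\mathcal S$... but this is excluded by Assumption~\ref{A: D1}, while $F$ is only differentiable at $b$ if at all. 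Since $\bb_\infty$ is then an unconstrained minimizer of a strictly convex function that lies in $\mathcal S$, it is the unique minimizer over $\mathcal S$.
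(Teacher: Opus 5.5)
Your proposal is correct and follows essentially the paper's route: the bounded-away lemma giving $G(\beta(t))\ge C(r)$, integration of $\frac{d}{dt}D_F(z,\beta(t))\le -C(r)(\mathcal{L}(\beta(t))-\mathcal{L}^*)$, boundedness from coercivity of $D_F(\bar z,\cdot)$, uniqueness of cluster points, and identification of the limit through the integrated identity. The differences are minor. The paper proves uniqueness of cluster points more directly, via $D_F(\beta_*,\bar\beta)=\lim_t D_F(\beta_*,\beta(t))=D_F(\beta_*,\beta_*)=0$, instead of your three-point identity. It characterizes the limit by passing to the limit in $D_F(z,\beta(t))=D_F(z,\beta_0)+\int_0^t G\langle\nabla\mathcal{L},z-\beta\rangle\,ds$, so the objective equals $D_F(z,\beta_*)$ up to a constant, rather than by first-order optimality. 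Your version has the advantage of showing explicitly why $\int_0^\infty G(\beta)\nabla\mathcal{L}(\beta)\,dt$ converges, which the paper leaves implicit.
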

\vspace{2mm}
\noindent The proof is given in the Appendix; see Subsection \ref{Append: Main result}. % In the case $U=\RR^p\setminus\{b\}$, the function $F$ may be not twice differentiable at $b$. Therefore the proof techniques are different from the ones in \cite{alvarez2004hessian}, where the authors study the vanilla mirror flow trajectory on a convex set.
%  . Therefore the results in \cite{alvarez2004hessian} cannot be generalized in a straightforward manner to deal with the time-warping mirror flow in our setting. We tackle the nonconvexity of $U$, relying on Assumption \ref{A: D3i} and Lemma \ref{L: Bounded away x(t)}, which proves that $\|\bb(t)\|$ remains bounded away from $0$, thereby avoiding the point where the Hessian of $F$ is not defined which may be a stationary point, but not a minimizer of the loss function. 
The next corollary characterizes the implicit bias for time-warping mirror flow when the loss function is the composition of a convex function with a linear operator. This implicit bias corresponds to the Bregman distance between the set of minimizers of the loss and the initialization, extending the results presented in \cite{azulay2021implicit,wu2020implicit}.
\begin{corollary}
\label{Remark: Saving energy} Suppose that Assumption \ref{Assumption} holds and let  $\bb$ be the unique solution of the dynamical system \eqref{eq: x(t)flow}. Let $X\in \RR^{d\times p}$ and let $\ell\colon\RR^d\to \RR$ be differentiable. Assume that $\LLL=\ell \circ X$ and $\mathcal{S} = \left\{\bb\in\RR^{p}\mid X\bb=y\right\}$ for some $y\in\RR^{d}$. Let $P: \ \RR^p \to \RR^p$ be the projection operator onto the kernel of $X$. Then, for every $t>0$,
\begin{align}
\frac{d}{dt}\left(P(\nabla F\left(\bb(t)\right)\right)&=0.
\nonumber
\end{align}
Moreover, $\lim\limits_{t\rightarrow+\infty}\bb(t)=\bb_{\infty}$ with
\begin{align*} \{ \bb_{\infty} \} \ = \ \argmin\limits_{z\in\mathcal{S}} \ D_F(z,\bb_0). 
\end{align*}
\end{corollary}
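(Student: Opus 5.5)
The plan is to show directly that $P\nabla F(\bb(t))$ is conserved, and then to pass to the limit using Theorem~\ref{T: Main result}.

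\textbf{Step 1 (conservation law).} By Theorem~\ref{T: Existence}, $\bb$ is defined on $[0,+\infty)$. It never reaches $b$, since $\mathcal{L}(\bb(t))\le\mathcal{L}(\bb_0)<\mathcal{L}(b)$. Hence $t\mapsto\nabla F(\bb(t))$ is $C^1$, and the chain rule gives
\[
\frac{d}{dt}\nabla F(\bb(t))=\nabla^2F(\bb(t))\dot\bb(t)=-G(\bb(t))\nabla\mathcal{L}(\bb(t)).
\]
Since $\mathcal{L}=\ell\circ X$, we have $\nabla\mathcal{L}(\bb)=X^\top\nabla\ell(X\bb)\in\ran X^\top=(\ker X)^\perp$. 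Applying the linear, time-independent projection $P$ therefore yields $\frac{d}{dt}P\nabla F(\bb(t))=0$. Consequently $P\nabla F(\bb(t))=P\nabla F(\bb_0)$ for all $t\ge0$.

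\textbf{Step 2 (limit characterization).} Theorem~\ref{T: Main result} gives $\bb(t)\to\bb_\infty\in\mathcal{S}$. I need to ensure $\bb_\infty\neq b$. This holds because $b\notin\mathcal{S}$ by \ref{A: D1}, or alternatively because $\mathcal{L}(\bb_\infty)\le\mathcal{L}(\bb_0)<\mathcal{L}(b)$. Since $\nabla F$ is continuous on $\RR^p\setminus\{b\}$, we get $P\nabla F(\bb_\infty)=P\nabla F(\bb_0)$, that is, $\nabla F(\bb_\infty)-\nabla F(\bb_0)\in(\ker X)^\perp$.

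Now take any $z\in\mathcal{S}$. Then $z-\bb_\infty\in\ker X$. The three-point identity for Bregman divergences gives
\[
D_F(z,\bb_0)=D_F(z,\bb_\infty)+D_F(\bb_\infty,\bb_0)+\scal{\nabla F(\bb_\infty)-\nabla F(\bb_0)}{z-\bb_\infty}.
\]
The last term vanishes by orthogonality. Hence $D_F(z,\bb_0)\ge D_F(\bb_\infty,\bb_0)$, with equality only if $D_F(z,\bb_\infty)=0$. By strict convexity of $F$ this forces $z=\bb_\infty$, which proves uniqueness of the minimizer.

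The identity requires $\nabla F$ at $\bb_0$ and at $\bb_\infty$, both of which differ from $b$. It does not involve $\nabla F(z)$, so it is fine if some $z\in\mathcal{S}$ has $F$ nondifferentiable there. In fact $z\neq b$ anyway, since $b\notin\mathcal{S}$.

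\textbf{Alternative route and main obstacle.} One could instead plug the conservation law into \eqref{minbreg1}. The linear term $\int_0^\infty G(\bb)\scal{\nabla\mathcal{L}(\bb)}{z}\,dt$ differs across $z\in\mathcal{S}$ only through components in $\ker X$, where $\nabla\mathcal{L}$ is orthogonal, so this term is constant on $\mathcal{S}$. That route, however, requires justifying convergence of the improper integral. The direct argument above avoids this. The only delicate point is the continuity of $\nabla F$ at $\bb_\infty$, which is secured by $\bb_\infty\in\mathcal{S}\not\ni b$.
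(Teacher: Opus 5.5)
Your proof is correct. Step 1 is exactly the paper's conservation argument, but Step 2 identifies the limit by a different route.

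The paper does not use the conserved quantity to characterize $\beta_\infty$. It specializes the general bias formula \eqref{minbreg1} of Theorem~\ref{T: Main result}, with integrand $G(\beta(t))\langle\nabla\mathcal{L}(\beta(t)),z-\beta(t)\rangle$. For $z\in\mathcal{S}$ this integrand equals $G(\beta(t))\langle\nabla\ell(X\beta(t)),y-X\beta(t)\rangle$, which does not depend on $z$ at any time $t$. The integral is therefore constant on $\mathcal{S}$, and the argmin reduces to that of $D_F(\cdot,\beta_0)$.

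You instead pass the conserved quantity to the limit and obtain $\nabla F(\beta_\infty)-\nabla F(\beta_0)\in(\ker X)^\perp$. This is the first-order optimality condition for the Bregman projection onto the affine set, and you close with the three-point identity.

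What each route buys:
\begin{itemize}
\item The paper's route makes the corollary a two-line specialization of the main theorem, and its bias statement does not need the conservation law at all.
\item Your route uses only the trajectory convergence $\beta(t)\to\beta_\infty\in\mathcal{S}$. It also gives the Pythagorean identity $D_F(z,\beta_0)=D_F(z,\beta_\infty)+D_F(\beta_\infty,\beta_0)$ on $\mathcal{S}$ as a bonus.
\item Your route ties the bias directly to the conservation law. This is exactly the mechanism the proofs of Corollaries~\ref{L: Dsol-Uv} and~\ref{L: CWNS} later use to extract explicit formulas for $P(\beta_\infty)$.
\end{itemize}

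On your worry about the improper integral: in the $z-\beta(t)$ form it is harmless. Indeed, $\int_0^t G\langle\nabla\mathcal{L}(\beta),z-\beta\rangle\,ds=D_F(z,\beta(t))-D_F(z,\beta_0)$, and this converges as $t\to+\infty$ because $D_F(z,\cdot)$ is continuous at $\beta_\infty\neq b$. The difficulty you point to only arises if one splits off $\int_0^{+\infty}G\langle\nabla\mathcal{L}(\beta),z\rangle\,dt$ on its own, as the display in \eqref{minbreg1} loosely does.
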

\begin{proof}
Using linearity of $P$, the dynamical system and the fact that $\ran(X^{\top})=\ker(X)^{\perp}$, we get
\begin{align*}
\frac{d}{dt}\left(P(\nabla F\left(\bb(t)\right)\right)& = P\left(\nabla^2 F\left(\bb(t)\right)\right)\dot{\bb}(t)\\
&=P\left(-G(\bb(t))\nabla \mathcal{L}\left(\bb(t)\right)\right)\dot{\bb}(t) \\
&= -G(\bb(t))P\left(X^{\top}\nabla \mathcal{\ell}\left(X\bb(t)\right)\right) \dot{\bb}(t)= 0.
\end{align*}
It follows from Theorem \ref{T: Main result} that $\lim\limits_{t\rightarrow+\infty} \bb(t)=\bb_{\infty}$ with 
\begin{align*}
\left\{ \bb_{\infty}\right\} = \argmin\limits_{z\in\mathcal{S}} \left\{D_F(z,\bb_0)+\int_{0}^{+\infty}G(\bb(t))\scal{\nabla\mathcal{L}(\bb(t))}{z-\bb(t)}dt \right\}.
\end{align*}
To conclude the assertion, it is sufficient to prove that the integral is constant with respect to $z$. By the chain rule, for every $z\in\mathcal{S}$,
\begin{align*}
\int_{0}^{+\infty}G(\bb(t))\scal{\nabla\mathcal{L}(\bb(t))}{z-\bb(t)}dt=&\int_{0}^{+\infty}G(\bb(t))\scal{X^{\top}\nabla \ell(X\bb(t))}{z-\bb(t)}dt\nonumber\\=&\int_{0}^{+\infty}G(\bb(t))\scal{\nabla \ell(X\bb(t))}{y-X\bb(t)}dt
\end{align*}
which is independent of $z\in\mathcal{S}$. Consequently, from \eqref{minbreg1}, we obtain that the limit of the trajectory satisfies
\begin{align*}
   \left\{ \bb_{\infty} \right\} = \argmin\limits_{z\in\mathcal{S}} \quad D_F(z,\bb_0).
\end{align*}\hfill
\end{proof}
\begin{remark}
Consider the case of $\LLL=\ell \circ X$ with $X$ linear operator. A sufficient condition for the set of minimizers of $\LLL$ to be of the form $\mathcal{S} = \left\{\bb\in\RR^{p}\mid X\bb=y\right\}$ for some $y\in\RR^{d}$ is to assume that $\ell$ is strictly-convex. Indeed, denote by $y$ the unique minimizer of the problem $$\min_{p\in \ran(X)} \ell(p).$$
If $X\bar{\beta}=y$ then $\bar{\beta} \in \argmin \LLL$. Now we want to show that if $\bar{\beta} \in \argmin \LLL$ then $X\bar{\beta}=y$. We have $$\ell(X\bar{\beta})=\LLL(\bar{\beta})=\min_{\beta\in\R^p} \LLL(\beta)=\min_{\beta \in \R^p} \ell(X\beta)=\min_{p\in \ran(X)} \ell(p)=\ell(y).$$
Since $y$ is the unique minimizer of $\ell$ on $\ran(X)$, $X\bar{\beta}\in\ran(X)$ and $\ell(X\bar{\beta})=\ell(y)$, then  $X\bar{\beta} = y$.  
\end{remark}
\ \\
\ \\
% The point $\bb_{\infty}$ is  the (unique) Bregman projection, with respect to the entropy $F$, of $\bb_0$ on $\mathcal{S}$. In particular, $\bb_{\infty}$ belongs to the domain of the entropy $F$, which therefore can also implicitly enforce some constraints. For instance, 
% the parameterization $\bb=\theta\odot\theta$ implicitly enforces the positivity of the solution: in this case the domain of $F$ is the positive orthant, see the case $L=2$ in equation~\eqref{eq:had}. 
\vspace{2mm}
\section{Application to fully connected linear networks}\label{SS: AFCLN}
We consider the setting of Example \ref{Ex: MV}, namely multi-neuron fully connected linear networks of depth 2, see \cite{arora2019implicit,azulay2021implicit}.  In this case, $\bb=q(\theta)$ with $\theta=(\W,w)\in\RR^{p\times m} \times\RR^{m}$ and $q(\theta)=\W \w$. The gradient flow on the reparameterization is then given by
\begin{equation}\label{eq: UVector-Flow v-WNN}
\begin{cases}
     \Dot{\W}(t)&=-\nabla_{\W}\mathcal{L}(\W(t)\w(t))=-\nabla \mathcal{L}(\bb(t)) \w^{\top}(t)\\ 
     \Dot{\w}(t)&=-\nabla_{\w}\mathcal{L}(\W(t)\w(t))=- \W^{\top}(t)\nabla\mathcal{L}(\bb(t)) \hspace{6mm}\text{ with }\W(0)=W_0, \w(0)=w_0.
  \end{cases}    
\end{equation}

The following theorem  shows that the  dynamics in the variable $\beta$ is a generalized mirror flow.

\begin{theorem}
    
Let  Assumption~\ref{A: D1} and \ref{A: D3} hold with $b=0$ and  $(\W(t),\w(t))$ be the solution of \eqref{eq: UVector-Flow v-WNN} with
\begin{align}
\W^{\top}_0\W_0=\w_0\w^{\top}_0.
\label{Balance initialization}
\end{align}
Then $\bb(t)=\W(t)\w(t)$ satisfies \eqref{eq: x(t)flow} with $F(\bb)=(2/3)\|\bb\|^{3/2}$ and $G(\bb)=\|\bb\|^{1/2}$. 
\end{theorem}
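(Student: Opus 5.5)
The plan is to exploit the conservation law that the balanced initialization \eqref{Balance initialization} propagates, and use it to reduce $(\W,\w)$ to a rank-one structure expressible purely through $\bb$. First, differentiating along \eqref{eq: UVector-Flow v-WNN},
\[
\tfrac{d}{dt}\bigl(\W^\top\W-\w\w^\top\bigr)
=-\w\nabla\cL(\bb)^\top\W-\W^\top\nabla\cL(\bb)\w^\top+\W^\top\nabla\cL(\bb)\w^\top+\w\nabla\cL(\bb)^\top\W=0,
\]
so $\W(t)^\top\W(t)=\w(t)\w(t)^\top$ for every $t$ in the existence interval of the $\theta$-flow.

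Second, I rule out the singular point $\bb=0$. Along the gradient flow in $\theta$, $t\mapsto\cL(q(\theta(t)))$ is non-increasing. Hence, by \ref{A: D3} with $b=0$,
\[
\cL(\bb(t))\le\cL(\bb_0)<\cL(0),
\]
so $\bb(t)\neq0$ and in particular $\w(t)\neq0$. The balance identity then says that $\W$ has a single nonzero singular value $\|\w\|$ with right singular vector $\w/\|\w\|$. Therefore $\W=u\w^\top$ for some unit vector $u=u(t)$, which yields
\[
\bb=\W\w=\|\w\|^2u,\qquad \|\bb\|=\|\w\|^2=\|\W\|_F^2,\qquad \W\W^\top=\|\w\|^2uu^\top=\frac{\bb\bb^\top}{\|\bb\|}.
\]
This identity also shows that $\theta(t)$ stays bounded as long as $\bb(t)$ does. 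Consequently the $\theta$-flow, which is locally Lipschitz by \ref{A: D1}, exists on the same interval as the $\bb$-trajectory; whenever the hypotheses of Theorem \ref{T: Existence} hold, uniqueness identifies it with the global GMF solution.

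Third, the computation. By the chain rule,
\[
\dot\bb=\dot\W\w+\W\dot\w=-\bigl(\|\w\|^2I+\W\W^\top\bigr)\nabla\cL(\bb)=-\Bigl(\|\bb\|I+\frac{\bb\bb^\top}{\|\bb\|}\Bigr)\nabla\cL(\bb).
\]
For $F(\bb)=\tfrac23\|\bb\|^{3/2}$ and $\bb\ne0$ we have $\nabla F(\bb)=\|\bb\|^{-1/2}\bb$ and
\[
\nabla^2F(\bb)=\|\bb\|^{-1/2}\Bigl(I-\tfrac12 P\Bigr),\qquad P:=\frac{\bb\bb^\top}{\|\bb\|^2}.
\]
Since $P^2=P$, one gets $(I-\tfrac12P)(I+P)=I$, and therefore
\[
\nabla^2F(\bb)\dot\bb=-\|\bb\|^{-1/2}\|\bb\|\nabla\cL(\bb)=-G(\bb)\nabla\cL(\bb),
\]
which is exactly \eqref{eq: x(t)flow}.

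The main obstacle is the second step: deriving the factorization $\W=u\w^\top$ from the balance condition, which needs $\w\ne0$, together with keeping the trajectory away from the origin where $\nabla^2F$ is undefined. I would handle both through monotonicity of the loss and \ref{A: D3}, as above. The rank-one factorization itself I would justify via the SVD of $\W$, or directly by checking $\|\W\w\|=\|\w\|^2$ and $\|\W\|_F=\|\w\|$ and then invoking the equality case of Cauchy--Schwarz.
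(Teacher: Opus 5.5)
Your proof is correct and follows essentially the same route as the paper: propagate the balance identity $W^\top W = ww^\top$, express $\|w\|^2$ and $WW^\top$ in terms of $\beta$, and multiply the resulting equation for $\dot\beta$ by $\nabla^2 F(\beta)$. The differences are minor. You obtain $WW^\top=\beta\beta^\top/\|\beta\|$ from the rank-one factorization $W=uw^\top$, whereas the paper uses uniqueness of the positive semidefinite square root of $(WW^\top)^2=\beta\beta^\top$. You also explicitly rule out $\beta(t)=0$, using monotonicity of $\mathcal{L}(q(\theta(t)))$ and the condition $\mathcal{L}(\beta_0)<\mathcal{L}(0)$; the paper's proof leaves this point implicit.
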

\begin{proof}
If we multiply the expression of $\Dot{\W}(t)$ on the left by $\W^{\top}(t)$ and the one of $\Dot{\w}(t)$ on the right by $\w^{\top}(t)$, we get that
\begin{align}
      \W^{\top}(t)\Dot{\W}(t)=- \W^{\top}(t)\nabla\mathcal{L}(\bb(t))\w^{\top}(t)=\Dot{\w}(t)\w^{\top}(t).\nonumber
\end{align}
Adding to the previous equality the one obtained by transposing it, we have that,
\begin{align}
     \frac{d}{dt}\left(\W^{\top}(t)\W(t)\right)=\W^{\top}(t)\Dot{\W}(t)+\dot{\W}^{\top}(t)\W(t)=\Dot{\w}(t)\w^{\top}(t)+\w(t)\Dot{\w}^{\top}(t)=\frac{d}{dt}\left(\w(t)\w^{\top}(t)\right).\nonumber
\end{align}  
Then, from the latter equation and the initialization $\W^{\top}(0)\W(0)=\w(0)\w^{\top}(0)$, we deduce that, for every $t\geq 0$, 
\begin{align}
 \W^{\top}(t)\W(t)=\w(t)\w^{\top}(t).\label{eq: Matrix-eqq}\end{align}
Multiplying by $\w(t)$ on the right, by $\w^{\top}(t)$ on the left and recalling that $\bb(t)=\W(t)w(t)$, we have that $\|\bb(t)\|^{2}=\|\w(t)\|^{4}$ and so $\|\bb(t)\|=\|\w(t)\|^{2}.$ 
\\ \\Analogously, multiplying \eqref{eq: Matrix-eqq} by $\W(t)$ on the left and by $\W^{\top}(t)$ on the right, we deduce that $(\W(t)\W^{\top}(t))^{2}=\bb(t)\bb^{\top}(t)$. Since both $\W(t)\W^{\top}(t)$ and $\bb(t)\bb^{\top}(t)$ are symmetric positive semi-definite, equality $(\W(t)\W^{\top}(t))^{2}=\bb(t)\bb^{\top}(t)$, implies
\begin{equation*}\W(t)\W^{\top}(t)=\frac{\bb(t)\bb^{\top}(t)}{\|\bb(t)\|}.\end{equation*}
Equation~\eqref{eq: UVector-Flow v-WNN} yields
\begin{align}
\dot{\bb}(t)=\dot{\W}(t)\w(t)+\W(t)\dot{\w}(t)=-\nabla \mathcal{L}(\bb(t)) \|\w(t)\|^{2}- \W(t)\W^{\top}(t)\nabla\mathcal{L}(\bb(t)).\label{eq: Matrix-eq33}
     \end{align}
Plugging in \eqref{eq: Matrix-eq33} the expressions of $\|\w(t)\|^{2}$
 and $\W(t)\W^{\top}(t)$ in terms of $\bb(t)$, we get
\begin{align}
\dot{\bb}(t)=&-\left(\|\bb(t)\|\Id+\frac{\bb(t)\bb^{\top}(t)}{\|\bb(t)\|} \right)\nabla\mathcal{L}(\bb(t)).
     \label{eq: Uvanot0 Flow11}
     \end{align}
Now notice that the Hessian of $F$ and its inverse are given by
\begin{equation*}
\nabla^2F(\bb)=\|\bb\|^{-1/2}\left(\Id-\frac{\bb\bb^{\top}}{2\|\bb\|^{2}}\right)\hspace{5mm}\text{ and }\hspace{5mm} \left[\nabla^2F(\bb)\right]^{-1}=\|\bb\|^{1/2}\left(\Id+\frac{\bb\bb^{\top}}{\|\bb\|^{2}}\right).
\end{equation*} 
Then, multiplying \eqref{eq: Uvanot0 Flow11} on the left by $\left[\nabla^2F(\bb)\right]$, we obtain the following time-warped dynamical system:
\begin{align} \label{eq: Uvanot0 Flow H} 
   \nabla^{2}\left(\frac{2}{3}{\|\bb(t)\|^{3/2}}\right)\dot{\bb}(t)=&-\|\bb(t)\|^{1/2}\nabla\mathcal{L}(\bb(t)),
\end{align}
which is indeed \eqref{eq: x(t)flow} with $F$ and $G$.\hfill
\end{proof}
\begin{remark}[Connection with intrinsic training dynamics]
The preceding proof illustrates the role of conservation laws
in obtaining intrinsic dynamics, as studied systematically in
\cite{marcotte2025intrinsic}.
The conserved quantity
\(
    \W(t)^\top\W(t)-\w(t)\w(t)^\top
\)
and the balanced initialization allow us to express the
induced dynamics solely in terms of $\bb=\W\w$:
\[
    \dot{\bb}=-K(\bb)\nabla\mathcal{L}(\bb),
    \qquad
    K(\bb)=\|\bb\|\Id+
           \frac{\bb\bb^\top}{\|\bb\|}.
\]
An intrinsic representation does not, by itself, guarantee
a mirror-flow representation.
Here, for $\bb\neq 0$, the additional identity
\(
    K(\bb)=G(\bb)\left[\nabla^2F(\bb)\right]^{-1}
\)
provides the time-warped mirror structure needed to apply
our well-posedness, convergence, and implicit-bias results.
\end{remark}
\begin{corollary}\label{cor:7.2}
    Suppose Assumption~\ref{A: D1} and \ref{A: D3} hold with $b=0$. Let $(\W(t),\w(t))$ be the solution of \eqref{eq: UVector-Flow v-WNN} with $\W^{\top}_0\W_0=\w_0\w^{\top}_0$
and let $\bb(t)=\W(t)\w(t)$. Then, for every $z\in\mathcal{S}$ and for every $t > 0$,
\begin{align*}
  \LLL(\bb(t))-\LLL^{*} \quad \leq \quad  \frac{D_{F}(z,\bb_0)}{\sqrt{r} t },
\end{align*}
where $r:=\frac{\mathcal{L}(0)-\mathcal{L}(\bb_0)}{2 \|\nabla \mathcal{L}(0)\|}$. Moreover, $\lim\limits_{t\rightarrow +\infty}\bb(t)=\bb_{\infty}$, where
\begin{align}
\label{implb} 
\left\{\bb_{\infty}\right\} \ = \ \argmin\limits_{z\in\mathcal{S}}  \quad \left\{ D_{F}(z,\bb_0)+\int_{0}^{+\infty}{\|\bb(t)\|^{1/2}}\scal{\nabla\mathcal{L}(\bb(t))}{z-\bb(t)}dt \right\}. 
\end{align}

\end{corollary}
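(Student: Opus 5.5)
The plan is to obtain Corollary~\ref{cor:7.2} as a direct specialization of Theorem~\ref{T: Existence} and Theorem~\ref{T: Main result}. We take $b=0$, $F(\bb)=\tfrac23\|\bb\|^{3/2}$ and $G(\bb)=\|\bb\|^{1/2}$. The preceding theorem already shows that $\bb(t)=\W(t)\w(t)$ satisfies \eqref{eq: x(t)flow} with these $F$ and $G$. So the work reduces to three things: verifying the remaining items of Assumption~\ref{Assumption}, identifying the constant $C(r)$, and checking that the trajectory produced by \eqref{eq: UVector-Flow v-WNN} coincides with the unique solution of \eqref{eq: x(t)flow}.

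First, \ref{A: D1} and \ref{A: D3} are hypotheses of the corollary. For \ref{A: D2}, I check the properties of $F$ in turn.
\begin{itemize}
\item $F$ is $C^2$ on $\RR^p\setminus\{0\}$, and it is strictly convex on all of $\RR^p$, since it is a strictly increasing convex function of the norm.
\item The Hessian computed in the previous proof has eigenvalues $\|\bb\|^{-1/2}$ and $\tfrac12\|\bb\|^{-1/2}$, so it is invertible away from $0$.
\item Its inverse $\|\bb\|^{1/2}(\Id+\bb\bb^\top/\|\bb\|^2)$ is smooth, hence locally Lipschitz, on $\RR^p\setminus\{0\}$.
\end{itemize}
For $G$, the function $\|\bb\|^{1/2}$ is locally Lipschitz and positive away from $0$. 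Moreover, for $\bb\notin\mathbb B(0,R)$ we have $G(\bb)\ge\sqrt R$, so we may take $C(R)=\sqrt R$. Substituting $C(r)=\sqrt r$ into the rate of Theorem~\ref{T: Main result} gives exactly the stated bound $D_F(z,\bb_0)/(\sqrt r\, t)$, with $r$ as in that theorem for $b=0$.

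For \ref{A: D4}, I show that $D_F(\bar z,\cdot)$ is coercive for any $\bar z\in\mathcal S$. Since $\nabla F(\bb)=\|\bb\|^{-1/2}\bb$, we have $\langle\nabla F(\bb),\bb\rangle=\|\bb\|^{3/2}$. This gives
\[
D_F(\bar z,\bb)=F(\bar z)+\tfrac13\|\bb\|^{3/2}-\|\bb\|^{-1/2}\langle \bb,\bar z\rangle\ \ge\ F(\bar z)+\tfrac13\|\bb\|^{3/2}-\|\bb\|^{1/2}\|\bar z\|,
\]
which tends to $+\infty$ as $\|\bb\|\to\infty$. Note that $D_F$ extends continuously to $\bb=0$, although this is not needed.

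The step I expect to need the most care is the identification of trajectories. Theorem~\ref{T: Existence} gives uniqueness of the solution of \eqref{eq: x(t)flow} only as a curve avoiding $b=0$. The map $(\W,\w)\mapsto\W\w$ is polynomial, so \eqref{eq: UVector-Flow v-WNN} has a unique local solution. The derivation of \eqref{eq: Uvanot0 Flow H} is valid on any interval on which $\bb(t)\neq0$.
\begin{itemize}
\item On the maximal such interval, $\bb(t)$ coincides with the unique solution of \eqref{eq: x(t)flow} given by Theorem~\ref{T: Existence}.
\item That solution stays in the sublevel set $\{\mathcal L\le\mathcal L(\bb_0)\}$, which excludes $0$ by \ref{A: D3}. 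By continuity, $\bb(t)$ can therefore never reach $0$ in finite time.
\item Boundedness of $\bb$ yields boundedness of $\|\w\|^2=\|\bb\|$. Since $\W^\top\W=\w\w^\top$, we get $\|\W\|_F^2=\mathrm{tr}(\W^\top\W)=\|\w\|^2$, so $\W$ is bounded as well. Hence $(\W,\w)$ exists globally.
\end{itemize}
Thus $\bb(t)=\W(t)\w(t)$ is the global solution for all $t\ge0$.

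With this in place, Theorem~\ref{T: Main result} directly yields the rate, the convergence $\bb(t)\to\bb_\infty$, and the characterization \eqref{implb}, obtained by inserting $G(\bb)=\|\bb\|^{1/2}$ into \eqref{minbreg1}.
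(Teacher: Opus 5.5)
Your proposal is correct and follows the paper's proof. Like the paper, you verify \ref{A: D2} with $C(R)=\sqrt R$, obtain coercivity of $D_F(z,\cdot)$ from the same Cauchy--Schwarz bound $D_F(z,\bb)\ge \tfrac23\|z\|^{3/2}+\|\bb\|^{1/2}\bigl(\tfrac13\|\bb\|-\|z\|\bigr)$, and then invoke Theorems~\ref{T: Existence} and~\ref{T: Main result}. Your extra step identifying $\W(t)\w(t)$ with the unique solution of \eqref{eq: x(t)flow} is something the paper leaves implicit; it shows $\bb(t)\neq 0$ and gives global existence of $(\W,\w)$ via $\|\W\|_F^2=\|\w\|^2=\|\bb\|$, and your argument for it is sound.
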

\begin{proof}
    For $b=0$, Assumption \ref{A: D2} is satisfied. In particular, it holds with $C(R)=\sqrt{R}$. Next we show that assumption \ref{A: D4} holds, namely that there exists $\bar{z}\in\mathcal{S}$ such that $D_{F}(\bar{z},\cdot)$ is coercive. Indeed, the Bregman divergence $D_{F}(z,\cdot)$ is coercive for every $z\in\RR^p$: from the Cauchy-Schwarz inequality, 
\begin{align*}
D_{F}(z,\bb)=&\hspace{1mm}\frac{2}{3}\|z\|^{3/2}-\frac{2}{3}\|\bb\|^{3/2}-\|\bb\|^{-1/2}\scal{\bb}{z-\bb}\nonumber\\
% =&\hspace{1mm} \frac{2}{3}\|z\|^{3/2}+\frac{1}{3}\|\bb\|^{3/2}-\|\bb\|^{-1/2}\scal{\bb}{z} \\ 
\geq &\hspace{1mm}\frac{2}{3}{\|z\|^{3/2}}+\frac{1}{3}\|\bb\|^{3/2}-{\|\bb\|}^{1/2}\|z\|\\
 = &\hspace{1mm}\frac{2}{3}{\|z\|^{3/2}}+{\|\bb\|^{1/2}}\left(\frac{1}{3}\|\bb\|-\|z\|\right).
\end{align*}
Then, when the norm of $\bb$ tends to infinity, $D_{F}(z,\bb)$ also tends to infinity. The statement follows from Theorem \ref{T: Existence} and Theorem \ref{T: Main result}.\hfill
\end{proof}
\begin{remark}
    As noted in \cite{azulay2021implicit},  there does not exist an entropy function $F$ such that the dynamical system \ref{eq: Uvanot0 Flow11} can be written as a mirror flow (see Lemma \ref{poincare}), and the results in \cite[Theorem 4.6]{li2022implicit} cannot be applied.
\end{remark}

\begin{remark}
When $\LLL$ is the square loss, this setting has been considered in \cite{azulay2021implicit}. 
Our result generalizes the work in \cite{azulay2021implicit}, where the authors compute the implicit bias for the square loss. Moreover, we prove the existence, uniqueness and convergence of the trajectory to a solution, while in \cite{azulay2021implicit} the authors assume these properties to hold.
\end{remark}

\begin{remark} Our results can be contrasted to those in \cite{arora2018optimization}. In the latter paper, given $N$ matrices $W_i\in\mathbb{R}^{p_i}\times\mathbb{R}^{p_{i+1}}$ for every $i$ with $p_1=p$, $p_{N+1}=1$, the authors consider the reparameterization $\bb=W_1\ldots W_N$. For the special case $N=2$, this setting coincides with ours. In \cite[Claim 2]{arora2018optimization}, the flow \eqref{eq: Uvanot0 Flow11} has been derived, and interpreted in a different way. Our results exploit the to time warping mirror flow interpretation. Thanks to this, we can transform the impossibility result \cite[Theorem 2]{arora2018optimization}, stating that the trajectory $\bb(t)$ does not follow any gradient flow, into an existence result if the geometry is modified through a time warped mirror function. Moreover, the mirror flow interpretation gives the possibility of naturally deriving an implicit bias. 
\end{remark}
\begin{remark}
The case where $\W^{\top}(0)\W(0)-\w(0)\w^{\top}(0)=\gamma\Id$ for $\gamma\neq 0$ could also be considered, as in \cite{azulay2021implicit}. In this case, we could compute the implicit bias, which is the generalization of the one proposed in \cite{arora2018optimization}\cite[Theorem 2]{azulay2021implicit} for the least square, see \cite[Appendix B]{azulay2021implicit}. The computations are very technical and are beyond the scope of this paper. 
\end{remark}
We can further characterize the limit point for fully connected linear networks of depth 2, in the linear setting of Corollary \ref{Remark: Saving energy}. 
\begin{corollary}\label{L: Dsol-Uv}
Suppose that assumptions \ref{A: D1} and \ref{A: D3} are satisfied, assume that $\LLL(\bb)=\ell(X\bb)$ and $\mathcal{S}=\left\{\bb\in\RR^{p}\mid X\bb=y\right\}$, where $X\in \RR^{d\times p}$ and $y\in\RR^{d}$, and let $P: \ \RR^p \to \RR^p$ be the projection operator onto the kernel of $X$. Let $(\W(t),\w(t))$ be the solution of \eqref{eq: UVector-Flow v-WNN} with
$\W^{\top}_0\W_0=\w_0\w^{\top}_0$
and let $\bb(t)=\W(t)\w(t)$. Then $\bb(t)$ satisfies that 
\begin{align}
\frac{d}{dt}\left(\frac{P(\bb(t))}{\|\bb(t)\|^{1/2}}\right)&=0
\nonumber
\end{align}
and that $\lim\limits_{t\to +\infty} \bb(t)=\bb_{\infty}$ with
\begin{align*}
\{ \bb_{\infty}\} \ = \ &\argmin\limits_{z\in\mathcal{S}} \left\{ \frac{2}{3}\|z\|^{3/2}-\frac{\scal{\bb_{0}}{z}}{\sqrt{\|\bb_{0}\|}} \right\}. 
\end{align*}
Moreover, let
\begin{align}
    \{ \bb_{*} \} = \argmin_{z\in\mathcal{S}}\ \|z\|^2.
    \label{P: minimal norm Uv}
\end{align}
Then,
\begin{align}\label{eq:LSreparam}
    \bb_{\infty}= \ \bb_{*}+ \alpha P(\bb_{0}), \text{\quad where \quad}\alpha^2=&\frac{\|P(\bb_{0})\|^2+\sqrt{\|P(\bb_{0})\|^4+4\|\bb_{0}\|^2\|\bb_{*}\|^2}}{2\|\bb_{0}\|^2}.
\end{align}   

\end{corollary}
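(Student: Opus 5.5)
We will obtain the statement by specializing Corollary~\ref{Remark: Saving energy} to the data $F(\bb)=\frac23\|\bb\|^{3/2}$, $G(\bb)=\|\bb\|^{1/2}$, $b=0$, and then solving the resulting first-order optimality conditions explicitly. The argument has three steps.

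\emph{Step 1: reduction to the time-warped mirror flow and the conservation law.} By the theorem preceding Corollary~\ref{cor:7.2}, the balanced initialization $\W_0^\top\W_0=\w_0\w_0^\top$ ensures that $\bb(t)=\W(t)\w(t)$ solves \eqref{eq: x(t)flow} with these $F$ and $G$. In the proof of Corollary~\ref{cor:7.2} we already checked that Assumption~\ref{A: D2} holds with $C(R)=\sqrt R$ and that Assumption~\ref{A: D4} holds. Hence the whole of Assumption~\ref{Assumption} is in force, and Corollary~\ref{Remark: Saving energy} applies. It gives $\frac{d}{dt}P(\nabla F(\bb(t)))=0$. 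Since $\nabla F(\bb)=\bb/\|\bb\|^{1/2}$ for $\bb\neq 0$, and $P$ is linear, this is exactly the claimed identity for $P(\bb(t))/\|\bb(t)\|^{1/2}$. Here we use that $\bb(t)\neq0$ along the trajectory, which is part of the well-posedness in Theorem~\ref{T: Existence}.

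\emph{Step 2: the variational characterization of the limit.} Corollary~\ref{Remark: Saving energy} also gives $\{\bb_\infty\}=\argmin_{z\in\mathcal S}D_F(z,\bb_0)$. Expanding,
\[
D_F(z,\bb_0)=\tfrac23\|z\|^{3/2}-\tfrac23\|\bb_0\|^{3/2}-\|\bb_0\|^{-1/2}\scal{\bb_0}{z-\bb_0}.
\]
The terms not depending on $z$ do not affect the minimizer, so we may drop them. What remains is the stated objective $\frac23\|z\|^{3/2}-\scal{\bb_0}{z}/\sqrt{\|\bb_0\|}$, which is strictly convex; uniqueness of the minimizer follows.

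\emph{Step 3: closed form.} This is the only computational step. The function $F$ is $C^1$ on all of $\RR^p$, with $\nabla F(0)=0$. Therefore the optimality condition for minimizing $D_F(\cdot,\bb_0)$ over the affine set $\mathcal S=\bb_*+\ker X$ is
\[
\nabla F(\bb_\infty)-\nabla F(\bb_0)\in\ker(X)^\perp .
\]
Equivalently, $P(\bb_\infty)/\|\bb_\infty\|^{1/2}=P(\bb_0)/\|\bb_0\|^{1/2}$, which is also what Step 1 gives in the limit $t\to+\infty$. We have $\bb_\infty\neq0$ because $\LLL(\bb_\infty)=\LLL^*<\LLL(0)$ by Assumption~\ref{A: D3}.

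The minimal-norm solution satisfies $\bb_*\in\ran(X^\top)=\ker(X)^\perp$. Hence $\bb_\infty=\bb_*+P(\bb_\infty)$ is an orthogonal decomposition, and $P(\bb_\infty)=\alpha P(\bb_0)$ with $\alpha:=\|\bb_\infty\|^{1/2}/\|\bb_0\|^{1/2}>0$. Pythagoras gives $\|\bb_\infty\|^2=\|\bb_*\|^2+\alpha^2\|P(\bb_0)\|^2$. Substituting $\|\bb_\infty\|=\alpha^2\|\bb_0\|$ yields the quadratic equation in $s=\alpha^2$:
\[
\|\bb_0\|^2s^2-\|P(\bb_0)\|^2s-\|\bb_*\|^2=0 .
\]
Its unique positive root is the formula in \eqref{eq:LSreparam}.

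The main point requiring care is justifying this optimality condition and the sign choice. For the condition, note that $F$ is differentiable at the origin even though its Hessian is not, so the KKT condition on the affine set $\mathcal S$ is legitimate. For the sign, $\alpha$ is a ratio of norms and hence positive, which selects the positive root. The degenerate case $P(\bb_0)=0$ is consistent with the formula: it gives $\bb_\infty=\bb_*$.
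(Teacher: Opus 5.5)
Your proof is correct and follows essentially the same route as the paper. The conservation law from Corollary~\ref{Remark: Saving energy} gives $P(\bb_\infty)=\alpha P(\bb_0)$ with $\alpha=(\|\bb_\infty\|/\|\bb_0\|)^{1/2}$, the minimal-norm solution $\bb_*\in\ker(X)^\perp$ supplies the orthogonal component (the paper's Lemma~\ref{lemma_alga}), and Pythagoras yields the quadratic in $\alpha^2$. The only differences are that you also derive the relation $P(\bb_\infty)/\|\bb_\infty\|^{1/2}=P(\bb_0)/\|\bb_0\|^{1/2}$ from the first-order optimality condition of the Bregman projection (valid because $F$ is $C^1$ at the origin), where the paper simply passes to the limit in the conserved quantity, and that you write out the expansion of $D_F(\cdot,\bb_0)$ that the paper leaves implicit.
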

\noindent The proof is given in the Appendix; see Subsection \ref{P: Dsol-Uv}.
The previous result establishes that when we run gradient flow on $\theta=(\W,\w)$, the corresponding trajectory is biased towards a specific element of $\mathcal{S}$, that is given by the sum of two terms: $\bb_*$, the minimal-norm vector in $\mathcal{S}$; and a scaling of $P(\bb_0)$, the projection of the initialization onto the kernel of $X$. 
\begin{remark}
We compare the previous result with the well-known one for vanilla gradient flow, namely
\begin{equation} \label{eq:vgf}
    \begin{cases}
        \dot{\tilde{\bb}}=-\nabla \mathcal{L}(\tilde{\bb}(t))\\
        \tilde{\bb}(0)=\bb_0.
    \end{cases}
\end{equation}
On the one hand, the system~\eqref{eq:vgf} is an instance of \eqref{eq: x(t)flow} with $F(\bb)=(1/2)\norm{\bb}^2$ and $G(\bb)=1$. Therefore, $D_F(z,\bb_0)=(1/2)\norm{z-\bb_0}^2$, and we derive from Corollary \ref{Remark: Saving energy} that $\tilde{\bb}(t) \to \tilde{\bb}_{\infty}$, where 
$\tilde{\bb}_{\infty}$ is the projection onto $\mathcal{S}$ of $\bb_0$, and so
\begin{equation}\label{eq:LS}
\tilde{\bb}_{\infty}=\bb_*+P(\bb_0).
\end{equation}
Since $\bb_{\infty}$ and $\tilde{\bb}_{\infty}$ lie in $\mathcal{S}$, they can be written as $\bb_*$ plus an element in $\mathrm{ker} X$. Equations~\ref{eq:LSreparam} and \ref{eq:LS} show that they  both lie on  the line passing through $\bb_*$ and having direction $P(\bb_0)$. Recall that, under our assumptions $\mathcal{L}(\beta_0)<\mathcal{L}(0)$, thus $\beta_0\neq 0$. However, $\bb_{\infty}=\tilde{\bb}_{\infty}$ if $\bb_0\in\ker(X)^\perp$. Otherwise, the main difference in the implicit bias between running vanilla gradient flow on $\bb$ and gradient flow on the reparameterization $\bb=\W\w$ is given by the scaling factor of $P(\bb_0)$, namely $\alpha$. Also note that when $\|\beta_0\|\to 0$, both $\beta_\infty$ e $\tilde{\beta}_\infty$ tend to $\beta_*$, but with a different speed, since $\alpha$ goes to $+\infty$. 
\end{remark}
\section{Application to weight normalization}\label{Sec: WN}
In this section we consider reparameterizations in terms of polar coordinates, related to weight normalization \cite{wu2020implicit,morwani2022inductive,salimans2016weight}, namely of the form 
$\beta=q(\theta)$, with $\theta=(\alpha,w)\in\mathbb{R}\times\mathbb{R}^p$ where
\begin{equation}
q(\alpha,w)=h(\alpha)\frac{w}{\|w\|},
    \label{reparam_norm}
\end{equation}

for a given $h\colon\mathbb{R}\to\mathbb{R}$. The associated gradient flow on the reparameterization is given by:
\begin{align}
     \Dot{\w}(t)=&-\nabla_{\w}\mathcal{L}(\bb(t))=-\frac{h(\alpha(t))}{\|\w(t)\|}\left(\Id-\frac{\w(t)\w^{\top}(t)}{\|\w(t)\|^{2}}\right)\nabla \mathcal{L}(\bb(t)) \label{eq: FWnormalvector-Flow-u}\\ \Dot{\alpha}(t)=&-\nabla_{\alpha}\mathcal{L}(\bb(t))=-h'(\alpha(t))\scal{\frac{\w(t)}{\|\w(t)\|}}{\nabla \mathcal{L}(\bb(t))}.
     \label{eq: FWnormalvector-Flow-alpha}
\end{align}
In the following, we will show that this flow is equivalent to a generalized mirror one, for a mirror function of the form $F(\bb)=f(\|\bb\|)$ and a time warping of the form $G(\bb)=g(\|\bb\|)$.  
To derive the main result of this section, we need an auxiliary lemma, providing a reformulation of \eqref{eq: x(t)flow} for radial functions. 
From now on, unless explicitly stated otherwise, when we refer to assumptions \ref{A: D2} and \ref{A: D1}, we mean them with $b=0$. \vspace{2mm}
\begin{lemma}
\label{L: Hessian-Norm} Let $\mathcal{L}\colon \R^p \to \R$ be differentiable. Let $f\colon [0,+\infty)\to[0,+\infty)$ be a twice differentiable function on $(0,+\infty)$ such that $f'>0$ and $f''>0$ and let $g\colon [0,+\infty)\to[0,+\infty)$. Consider the following dynamical system:
\begin{align}
\label{eq: MF-Norm}
\begin{cases}
\Dot{\bb}(t)&=-\dfrac{g(\|\bb(t)\|)\|\bb(t)\|}{f'(\|\bb(t)\|)}\left(\Id -\dfrac{\bb(t)\bb^{\top}(t)}{\|\bb(t)\|^{2}}\right) \nabla\mathcal{L}(\bb(t))-\dfrac{g(\|\bb(t)\|)}{f''(\|\bb(t)\|)}\dfrac{\bb(t)\bb^{\top}(t)}{\|\bb(t)\|^{2}}\nabla\mathcal{L}(\bb(t))\\
\bb(0)&=\bb_0.
\end{cases}
\end{align}
Then the latter is equivalent to \eqref{eq: x(t)flow} with $F(\bb)=f(\|\bb\|)$ and $G(\bb)=g(\|\bb\|)$. Moreover, assume that $1/f^{'}$ and $1/f^{''}$ are locally Lipschitz on $(0,+\infty)$ and $g$ is locally Lipschitz on $(0,+\infty)$ and such that, for every $r>0$, there exists $c$ such that $g(s)>c$ for every $s>r$. Then assumption~\ref{A: D2} holds.
\end{lemma}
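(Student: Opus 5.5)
The plan is to compute the Hessian of the radial function $F(\bb)=f(\|\bb\|)$ on $\RR^p\setminus\{0\}$, invert it explicitly, and read off both claims from the resulting closed form. Write $r=\|\bb\|$ and $P_\bb=\bb\bb^\top/r^2$ for the orthogonal projection onto $\mathrm{span}\{\bb\}$. By the chain rule, $\nabla F(\bb)=f'(r)\,\bb/r$. Differentiating once more, using $\nabla r=\bb/r$ and $\nabla(\bb/r)=(\Id-P_\bb)/r$, gives
\[
\nabla^2F(\bb)=\frac{f'(r)}{r}\left(\Id-P_\bb\right)+f''(r)\,P_\bb .
\]
This is a spectral decomposition with respect to the complementary orthogonal projections $\Id-P_\bb$ and $P_\bb$. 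Since $f'>0$ and $f''>0$, both eigenvalues are positive. Hence $\nabla^2F(\bb)$ is positive definite and invertible, with
\[
\left[\nabla^2F(\bb)\right]^{-1}=\frac{r}{f'(r)}\left(\Id-P_\bb\right)+\frac{1}{f''(r)}P_\bb .
\]
Multiplying by $-G(\bb)\nabla\mathcal L(\bb)=-g(r)\nabla\mathcal L(\bb)$ reproduces exactly the right-hand side of \eqref{eq: MF-Norm}. Conversely, multiplying \eqref{eq: MF-Norm} on the left by $\nabla^2F(\bb)$ gives \eqref{eq: x(t)flow}. This proves the equivalence wherever $\bb(t)\neq 0$, which is the only region where \eqref{eq: x(t)flow} makes sense.

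For Assumption~\ref{A: D2} with $b=0$, I would check the items in turn.

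\emph{Twice differentiability.} $F$ is $C^2$ on $\RR^p\setminus\{0\}$, as a composition of $f$ (twice differentiable on $(0,\infty)$) with the smooth map $\bb\mapsto\|\bb\|$.

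\emph{Invertibility and local Lipschitz inverse.} Invertibility was shown above. The inverse Hessian is assembled from $r\mapsto r/f'(r)$, $r\mapsto 1/f''(r)$ and $\bb\mapsto P_\bb$. On any compact $K\subset\RR^p\setminus\{0\}$, the map $\bb\mapsto\|\bb\|$ is Lipschitz with values in a compact subinterval of $(0,\infty)$. On that subinterval, $1/f'$ and $1/f''$ are Lipschitz and bounded by hypothesis, and $r$ is Lipschitz and bounded. The map $\bb\mapsto P_\bb$ is smooth, hence Lipschitz, on $K$. Sums and products of bounded Lipschitz maps are Lipschitz, so the inverse Hessian is locally Lipschitz.

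\emph{The function $G$.} $G=g\circ\|\cdot\|$ is locally Lipschitz on $\RR^p\setminus\{0\}$ by the same composition argument. Given $R>0$, the hypothesis gives $c$ with $g(s)>c$ for $s>R$; for $\bb\notin\mathbb B(0,R)$ this yields $G(\bb)\ge c$, so we may take $C(R)=c$ (up to the boundary sphere, handled by applying the hypothesis with $R/2$).

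\emph{Strict convexity on all of $\RR^p$.} This is the step I expect to be the main obstacle, since the Hessian is only available off the origin. I would argue directly. Because $f'>0$ and $f''>0$, the function $f$ is strictly increasing and strictly convex on $[0,\infty)$, using continuity at $0$; if needed, I would add this mild continuity condition at $0$ as an assumption on $f$. Take $x\neq y$ and $\lambda\in(0,1)$, and set $z=\lambda x+(1-\lambda)y$. The triangle inequality and monotonicity of $f$ give $f(\|z\|)\le f(\lambda\|x\|+(1-\lambda)\|y\|)$. Convexity of $f$ then bounds this by $\lambda f(\|x\|)+(1-\lambda)f(\|y\|)$. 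It remains to show that one of these two inequalities is strict. If $\|x\|\neq\|y\|$, strict convexity of $f$ gives strictness in the second inequality. If $\|x\|=\|y\|$, then $x,y$ are distinct vectors of equal norm, so they are not positively collinear. In that case the triangle inequality is strict, and strict monotonicity of $f$ gives strictness in the first inequality. This establishes strict convexity of $F$ and completes the verification of Assumption~\ref{A: D2}.
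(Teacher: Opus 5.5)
Your proposal is correct and follows essentially the paper's route: compute the radial Hessian of $F(\bb)=f(\|\bb\|)$, invert it, and read off both the equivalence of the two systems and the conditions of Assumption~\ref{A: D2}. The differences are minor. You invert via the orthogonal splitting $P_\bb$, $\Id-P_\bb$ instead of the Sherman--Morrison formula, which also gives positive definiteness directly. Your strict-convexity argument (strict triangle inequality for distinct vectors of equal norm, strict convexity of $f$ otherwise) is more careful than the paper's one-line composition remark. You are right that continuity of $f$ at $0$ is tacitly needed for $F$ to be convex on all of $\RR^p$, and handling the boundary sphere via $R/2$ is a correct refinement.
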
\hspace{-6mm} 
\noindent The proof is given in the Appendix; see Subsection \ref{Proof: Hessian-Norm}.\vspace{2mm}
\begin{remark}
 By \eqref{eq: MF-Norm}, we get that
 \begin{align*}
 \frac{d}{dt} \frac{\bb(t)}{\|\bb(t)\|}=& \dfrac{1}{\|\bb(t)\|}\left(\Id -\dfrac{\bb(t)\bb^{\top}(t)}{\|\bb(t)\|^{2}}\right)\Dot{\bb}(t)=-\dfrac{g(\|\bb(t)\|)}{f'(\|\bb(t)\|)}\left(\Id -\dfrac{\bb(t)\bb^{\top}(t)}{\|\bb(t)\|^{2}}\right) \nabla\mathcal{L}(\bb(t)).\\
   \frac{1}{2}\frac{d}{dt} \|\bb(t)\|^2=&\scal{\bb(t)}{\dot{\bb}(t)}=- \frac{g(\|\bb(t)\|)}{f''(\|\bb(t)\|)}\scal{\bb(t)}{\nabla\mathcal{L}(\bb(t))}\nonumber
 \end{align*}
 This computation shows that the two terms on the right-hand side  of equation~\ref{eq: MF-Norm} can  be related separately to the derivative of intensity and direction of $\beta(t)$. In particular, if the second term on the right-hand side of \eqref{eq: MF-Norm} is zero, then the norm is constant.  
\end{remark}

Next, we  show that the gradient flow in \eqref{eq: FWnormalvector-Flow-u}-\eqref{eq: FWnormalvector-Flow-alpha} corresponds to a (radial) generalized mirror flow of the form \eqref{eq: MF-Norm}. 

\begin{theorem}\label{T: dynamics of x(t) Fwn}
Let assumption \ref{A: D1} hold. Consider the dynamics on $(\alpha,w)$ defined in \eqref{eq: FWnormalvector-Flow-u}-\eqref{eq: FWnormalvector-Flow-alpha} with initialization $(\alpha_0,w_0)$ such that $h(\alpha_0)>0$ and $\|\w_0\|=\gamma_0>0$ and assume that assumption \ref{A: D3} holds for $\bb_0=h(\alpha_0)w_0/\|w_0\|$.
Assume that there exists a function $f\colon [0,+\infty)\to[0,+\infty)$ twice differentiable on $(0,+\infty)$ satisfying the following equality for every $\alpha >0$
\begin{align}
(h'(\alpha))^2=\frac{f'(h(\alpha))h(\alpha)}{\gamma_0^2 f''(h(\alpha))}.
\label{A5: f-g-h assumption radial}
\end{align}
Then $\bb=h(\alpha)\frac{\w}{\|\w\|}$ satisfies
\begin{align}\Dot{\bb}(t)=&-\frac{\|\bb(t)\|^{2}}{\gamma_0^2}\left(\Id-\frac{\bb(t)\bb^{\top}(t)}{\|\bb(t)\|^{2}}\right)\nabla\mathcal{L}(\bb(t)) - \frac{f'(\|\bb(t)\|)\|\bb(t)\|}{\gamma_0^2 f''(\|\bb(t)\|)}\frac{\bb(t)\bb^{\top}(t)}{\|\bb(t)\|^2}\nabla\mathcal{L}(\bb(t)).\nonumber\end{align}
Moreover, assume that $f'>0$ and $f''>0$ on $(0,+\infty)$ and that $1/f'$ and $1/f''$ are locally Lipschitz on $(0,+\infty)$. Suppose in addition that for every $r>0$ there exists $c>0$ such that $f'(s)>c/s$ for every $s>r$. Define $g\colon \ \RR_{+}\to\RR$ as 
\begin{align}
g(s):=\frac{s f'(s)}{\gamma_0^2}.
\end{align}
Then, for every $t>0$, $\|\w(t)\|=\gamma_0$ and $h(\alpha(t))\geq \tilde{r}:= \frac{\mathcal{L}(0)-\mathcal{L}(\bb_0)}{2 \|\nabla \mathcal{L}(0)\|}$. Moreover, the dynamics of $\bb(t)$ satisfies \eqref{eq: x(t)flow} with $F(\bb)=f(\|\bb\|)$ and $G(\bb)=g(\|\bb\|)$
and assumption \ref{A: D2} holds. Finally, if $\lim_{s\to+\infty}f'(s)=+\infty$, then Assumption~\ref{A: D4} holds.
\end{theorem}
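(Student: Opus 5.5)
The plan is to read off, from the $(\alpha,\w)$ flow, one conserved quantity (the norm of $\w$) and one lower bound on $h(\alpha(t))$, and then compute $\dot\bb$ by the chain rule so that it matches the radial form \eqref{eq: MF-Norm} of Lemma~\ref{L: Hessian-Norm}.

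\textbf{Step 1: $\|\w(t)\|$ is constant.} By \eqref{eq: FWnormalvector-Flow-u}, $\dot\w(t)$ lies in the range of $\Id-\w\w^\top/\|\w\|^2$, which is orthogonal to $\w$. Hence $\frac{d}{dt}\|\w(t)\|^2=2\scal{\w}{\dot\w}=0$, and $\|\w(t)\|=\gamma_0$ on the interval of existence.

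\textbf{Step 2: computing $\dot\bb$.} Set $u=\w/\|\w\|$, so that $\bb=h(\alpha)u$ and $\|\bb\|=|h(\alpha)|$. Then
\[
\dot\bb=h'(\alpha)\dot\alpha\,u+\frac{h(\alpha)}{\gamma_0}(\Id-uu^\top)\dot\w .
\]
Substituting \eqref{eq: FWnormalvector-Flow-u}--\eqref{eq: FWnormalvector-Flow-alpha} gives
\[
\dot\bb=-(h'(\alpha))^2\,uu^\top\nabla\mathcal{L}(\bb)-\frac{h(\alpha)^2}{\gamma_0^2}(\Id-uu^\top)\nabla\mathcal{L}(\bb).
\]
While $h(\alpha)>0$ we have $uu^\top=\bb\bb^\top/\|\bb\|^2$ and $h(\alpha)=\|\bb\|$. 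Using \eqref{A5: f-g-h assumption radial} to replace $(h')^2$ by $f'(\|\bb\|)\|\bb\|/(\gamma_0^2f''(\|\bb\|))$ yields the displayed equation. With $g(s)=sf'(s)/\gamma_0^2$ we get $g\|\bb\|/f'=\|\bb\|^2/\gamma_0^2$ and $g/f''=\|\bb\|f'/(\gamma_0^2f'')$. So the equation is exactly \eqref{eq: MF-Norm}, and Lemma~\ref{L: Hessian-Norm} converts it into \eqref{eq: x(t)flow} with $F=f(\|\cdot\|)$ and $G=g(\|\cdot\|)$. The same lemma gives \ref{A: D2}: $g$ is locally Lipschitz on $(0,+\infty)$ as a product of locally Lipschitz functions, and $g(s)=sf'(s)/\gamma_0^2>c/\gamma_0^2$ for $s>r$.

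\textbf{Step 3: $h(\alpha(t))\ge\tilde r$ (the main obstacle).} Step 2 uses $h(\alpha(t))>0$, so we need the trajectory to stay away from $\bb=0$. I would argue by continuity. The original flow is a gradient flow of $\theta\mapsto\mathcal{L}(q(\theta))$, so $\mathcal{L}(\bb(t))\le\mathcal{L}(\bb_0)$ as long as it is defined. Convexity gives
\[
\mathcal{L}(0)-\mathcal{L}(\bb)\le\scal{\nabla\mathcal{L}(0)}{-\bb}+\text{(Lipschitz remainder)}.
\]
More cleanly, I would reuse the lemma from the proof of Theorem~\ref{T: Existence} (the appendix bound with $r=(\mathcal{L}(b)-\mathcal{L}(\bb_0))/(2\|\nabla\mathcal{L}(b)\|)$). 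It shows that $\mathcal{L}(\bb)\le\mathcal{L}(\bb_0)$ forces $\|\bb\|\ge\tilde r$, roughly through $\mathcal{L}(\bb)\ge\mathcal{L}(0)-\|\nabla\mathcal{L}(0)\|\|\bb\|-o(\|\bb\|)$. Then let $T=\sup\{t:h(\alpha(s))>0 \text{ on } [0,t]\}$. On $[0,T)$ we have $h(\alpha(t))=\|\bb(t)\|\ge\tilde r>0$, so by continuity $T$ cannot be finite. This step is delicate because the constant $\tilde r$ must be extracted exactly as in the appendix lemma. It also requires global existence of $(\alpha,\w)$; this follows from existence for $\bb$ (Theorem~\ref{T: Existence}) together with $\|\w\|=\gamma_0$ and $\dot\alpha$ being controlled.

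\textbf{Step 4: \ref{A: D4}.} For $F=f(\|\cdot\|)$ and $\|\bb\|\to\infty$, Cauchy--Schwarz gives
\[
D_F(z,\bb)\ge f(\|z\|)-f(\|\bb\|)+f'(\|\bb\|)(\|\bb\|-\|z\|).
\]
Since $f$ is convex, $f'(s)s-f(s)=\int_0^s(f'(s)-f'(\tau))\,d\tau+f(0)$, and this grows at least like $\tfrac{s}{2}(f'(s)-f'(s/2))$. Combined with $f'(s)\|z\|$ being dominated, this still needs care. I would instead use
\[
f(\|\bb\|)\le f(2\|z\|)+f'(\|\bb\|)(\|\bb\|-2\|z\|)
\]
(convexity at $\|\bb\|$), which gives $D_F(z,\bb)\ge f(\|z\|)-f(2\|z\|)+f'(\|\bb\|)\|z\|\to+\infty$ when $z\ne0$ and $f'\to\infty$. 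Any $\bar z\in\mathcal{S}$ is nonzero because $0\notin\mathcal{S}$ by \ref{A: D3}, so \ref{A: D4} holds.
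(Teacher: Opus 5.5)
Your proposal is correct. Steps 1--3 follow the paper's proof. The ingredients are the same: conservation of $\|\w\|$; the chain-rule formula $\dot\bb=-(h')^2uu^\top\nabla\LLL(\bb)-\frac{h^2}{\gamma_0^2}(\Id-uu^\top)\nabla\LLL(\bb)$; the lower bound $\|\bb(t)\|>\tilde r$ from monotonicity of the loss plus the convexity estimate of Lemma~\ref{L: Bounded away x(t)} with $b=0$; continuity of $h\circ\alpha$ to fix the sign; and Lemma~\ref{L: Hessian-Norm}.

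The one variation there is how you get $\frac{d}{dt}\LLL(\bb(t))\le 0$. You take it directly from the gradient-flow structure in $\theta$, which is legitimate because $\|\w(t)\|=\gamma_0>0$ keeps $q$ differentiable along the trajectory. The paper instead computes $\scal{\nabla\LLL(\bb)}{\dot\bb}$ explicitly and uses Cauchy--Schwarz. Your version is cleaner and does not depend on the sign of $h$.

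Two small corrections to Step 3 and its surroundings:
\begin{itemize}
\item Step 3 is not delicate. The gradient inequality gives $\LLL(\bb)\ge\LLL(0)-\|\nabla\LLL(0)\|\,\|\bb\|$ exactly, so drop the ``Lipschitz remainder'' and the $o(\|\bb\|)$ term. With such a term you would not recover the constant $\tilde r$.
\item Global existence of $(\alpha,\w)$ is not something you need to prove; the statement, like the paper, works along the given trajectory. Your sketch of it would also invoke Theorem~\ref{T: Existence}, hence \ref{A: D4}, which you only have under $f'\to+\infty$.
\end{itemize}

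The genuinely different part is Step 4. The paper first shows that $F=f(\|\cdot\|)$ is supercoercive: the tangent inequality gives $\liminf_{\|\bb\|\to\infty}F(\bb)/\|\bb\|\ge f'(s)$ for every $s>0$. It then cites a lemma of Bauschke, Borwein and Combettes to conclude that $D_F(z,\cdot)$ is coercive for every $z$.

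You instead combine Cauchy--Schwarz with the tangent inequality of $f$ at $\|\bb\|$, evaluated at $2\|z\|$. This gives the explicit bound $D_F(z,\bb)\ge f(\|z\|)-f(2\|z\|)+f'(\|\bb\|)\,\|z\|$, which tends to $+\infty$ whenever $z\neq 0$.

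Your route is self-contained. It also avoids having to check that the cited lemma applies to an $F$ that may fail to be differentiable at the origin, as happens for instance when $f'(0^+)>0$ in the polynomial weight-normalization example. Its cost is that it covers only $z\neq0$. That suffices here, since $0\notin\mathcal{S}$ by \ref{A: D3} and \ref{A: D4} needs only one $\bar z\in\mathcal S$. The paper's route yields coercivity for every $z$ at once, at the price of relying on the external result.
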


\noindent The proof is given in the Appendix; see Subsection \ref{Proof: dynamics of x(t) Fwn}.
\begin{remark}
\begin{enumerate}
\item The previous theorem guarantees that, if the initialization satisfies $h(\alpha_0)>0$, then $h(\alpha(t))\geq \tilde{r}>0$ for every $t>0$. Hence, from the definition of weight normalization reparameterization $\bb=h(\alpha)\frac{\w}{\|\w\|}$, we have that $\|\bb(t)\|=h(\alpha(t))$ for every $t>0$.
\item The previous theorem gives a sufficient condition on the function $h$ to cast the gradient flow over $\alpha$ and $\w$ into a time-warping mirror flow on $\bb$, as in Theorems \ref{T: Existence} and \ref{T: Main result}. These results allow us to establish the existence and uniqueness of the trajectory $\bb$ and convergence to the implicit bias. 
\item Note that not all choices of $h$ are viable. For instance, if $h(\alpha)=1$ for every $\alpha$, it is not possible do find a function $f$ satisfying the assumptions of our theorem. 
\item If $h'(\alpha)$ can be expressed in terms of $h(\alpha)$ as $h'(\alpha)=\varphi(h(\alpha))$ for some function $\varphi: \ \R_+\to \R_+$, we can determine $f'$ in \eqref{A5: f-g-h assumption radial} by solving the following differential equation:
\begin{equation*}\gamma_0^2 \left(\varphi(s)\right)^2 f''(s)-sf'(s)=0, \quad \quad s>0.\end{equation*}
\end{enumerate}
\end{remark}
In the context of Corollary \ref{Remark: Saving energy}, where the loss is a composition of a convex function with a linear operator and the solution set is a linear subspace, the next corollary allows us to further characterize the limit point of the flow deriving from weight normalization. Similarly to Corollary \ref{L: Dsol-Uv}, the limit point can be expressed as the sum of the minimal norm solution and a term in the kernel of the linear operator. We generalize the techniques used in \cite[Theorem 2.6]{wu2020implicit} to a broader class of weight normalization reparameterizations.
\begin{corollary}\label{L: CWNS} Under the same assumptions of Theorem~\ref{T: dynamics of x(t) Fwn}, suppose that $\LLL(\bb)=\ell(X\bb)$ and that $\mathcal{S}=\left\{\bb\in\RR^{p}\mid X\bb=y\right\}$, where $X\in \RR^{d\times p}$, $\ell: \RR^d \to \RR$ and $y\in\RR^{d}$. Additionally, suppose that assumption~\ref{A: D4} is satisfied. 
Moreover, let $P: \ \RR^p \to \RR^p$ be the projection operator onto the kernel of $X$. Then, $\beta(t)$ satisfies
\begin{align}
\frac{d}{dt}\left(\frac{P(\bb(t))f'(\|\bb(t)\|)}{\|\bb(t)\|}\right)=0.
\nonumber
\end{align}
and for $t\to +\infty$, $\bb(t)\to \bb_{\infty}$ with
\[\{\beta_\infty\}=\argmin_{z\in\mathcal{S}} \left\{f(\|z\|)-\frac{f'(\|\bb_0\|)}{\|\bb_0\|}\langle\bb_0,z\rangle\right\}.
\]
Moreover, let $\bb_{*}$ be the (unique) solution of $\min_{z\in\mathcal{S}}\ \|z\|^2$.
\begin{align}
\bb_{\infty}=\bb_{*}+P(\bb_{0})\frac{f'(\|\bb_{0}\|)\|\bb_{\infty}\|} {f'(\|\bb_{\infty}\|)\|\bb_{0}\|}.\nonumber
\end{align}

\end{corollary}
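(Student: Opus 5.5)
The approach is to reduce everything to Corollary~\ref{Remark: Saving energy}, applied with the radial mirror function $F(\bb)=f(\|\bb\|)$ and time warping $G(\bb)=g(\|\bb\|)$. Theorem~\ref{T: dynamics of x(t) Fwn} guarantees that $\bb(t)$ solves \eqref{eq: x(t)flow} with these $F,G$, that Assumption~\ref{A: D2} holds with $b=0$, and that $\|\bb(t)\|=h(\alpha(t))\ge\tilde r>0$. Assumptions~\ref{A: D1} and \ref{A: D3} are hypotheses, and \ref{A: D4} is assumed explicitly. So Assumption~\ref{Assumption} holds in full. Since $F$ is differentiable away from $0$, we have $\nabla F(\bb)=f'(\|\bb\|)\bb/\|\bb\|$. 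Linearity of $P$ then turns the conserved quantity $P(\nabla F(\bb(t)))$ of Corollary~\ref{Remark: Saving energy} into exactly $P(\bb(t))f'(\|\bb(t)\|)/\|\bb(t)\|$, which gives the first claim.

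For the variational characterization, Corollary~\ref{Remark: Saving energy} gives $\{\bb_\infty\}=\argmin_{z\in\mathcal S}D_F(z,\bb_0)$. Expanding,
\[
D_F(z,\bb_0)=f(\|z\|)-f(\|\bb_0\|)-\frac{f'(\|\bb_0\|)}{\|\bb_0\|}\scal{\bb_0}{z-\bb_0}.
\]
Dropping the terms that do not depend on $z$ yields the stated objective $f(\|z\|)-\frac{f'(\|\bb_0\|)}{\|\bb_0\|}\scal{\bb_0}{z}$. Uniqueness is already part of Theorem~\ref{T: Main result}; it also follows from strict convexity of $z\mapsto f(\|z\|)$, since $f'>0$ and $f''>0$.

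For the explicit formula, write $\mathcal S=\bb_*+\ker X$, where $\bb_*\in\ker(X)^\perp$ because it is the minimal-norm element. Note that $\bb_\infty\neq 0$: indeed $\|\bb(t)\|\ge\tilde r$ for all $t$, so $\|\bb_\infty\|\ge\tilde r$. Hence the objective is differentiable at $\bb_\infty$. The first-order optimality condition on the affine set $\mathcal S$ is that the gradient is orthogonal to $\ker X$:
\[
P\Bigl(\frac{f'(\|\bb_\infty\|)}{\|\bb_\infty\|}\bb_\infty-\frac{f'(\|\bb_0\|)}{\|\bb_0\|}\bb_0\Bigr)=0.
\]
The same identity can also be read off by passing to the limit in the conserved quantity, using continuity of $f'$ on $(0,+\infty)$. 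Since $P\bb_\infty=\bb_\infty-\bb_*$, solving for $\bb_\infty-\bb_*$ gives
\[
\bb_\infty=\bb_*+P(\bb_0)\frac{f'(\|\bb_0\|)\|\bb_\infty\|}{f'(\|\bb_\infty\|)\|\bb_0\|}.
\]
This uses $f'(\|\bb_\infty\|)>0$.

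The only delicate step is justifying that the singular point $0$ is avoided, so that $\nabla F$ is well defined both along the trajectory and at the limit. This is exactly what the lower bound $h(\alpha(t))\ge\tilde r$ from Theorem~\ref{T: dynamics of x(t) Fwn} provides. Everything else is bookkeeping on top of Corollary~\ref{Remark: Saving energy}.
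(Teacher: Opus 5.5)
Your proposal is correct and follows essentially the same route as the paper. Both use the conservation law from Corollary~\ref{Remark: Saving energy} with $\nabla F(\bb)=f'(\|\bb\|)\bb/\|\bb\|$, then the identity $(\Id-P)\bb_\infty=\bb_*$ (Lemma~\ref{lemma_alga} in the paper), and then solve for $P(\bb_\infty)$. The small differences are these: you obtain $P(\nabla F(\bb_\infty))=P(\nabla F(\bb_0))$ mainly from the first-order optimality condition of the Bregman projection, whereas the paper lets $t\to+\infty$ in the conserved quantity (a route you also mention), and you spell out the expansion of $D_F(z,\bb_0)$ that the paper leaves implicit.
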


\noindent The proof is given in the Appendix in Subsection \ref{P: CWNS}.
Note that, compared to the analogous result in Corollary \ref{L: Dsol-Uv}, this is a fixed point equation, in the sense that we cannot compute the norm of $\bb_{\infty}$ explicitly.

\section{Weight normalization: Examples}
In the following two examples, we explore the flexibility of our formulation. By choosing appropriately $h$ and $f$, we obtain the two well-known reparameterizations corresponding to polynomial and exponential weight normalization \cite{wu2020implicit,morwani2022inductive,salimans2016weight}. Moreover, we consider another reparametrization, where we combine polar coordinates weight normalization for fully connected networks.
\subsection{Polynomial Weight normalization}\label{R: WN}  Let Assumption \ref{A: D1} be satisfied and consider a reparameterization as in \eqref{reparam_norm} with 
$  h(\alpha)=\alpha^{L} .  
 $ 
The dynamics defined in \eqref{eq: FWnormalvector-Flow-u}-\eqref{eq: FWnormalvector-Flow-alpha} become
\begin{align}
     \Dot{\w}(t)=&-\frac{\alpha(t)^L}{\|\w(t)\|}\left(\Id-\frac{\w(t)\w^{\top}(t)}{\|\w(t)\|^{2}}\right)\nabla \mathcal{L}(\bb(t)) \label{eq: FWnormalvector-Flow-u-poly}\\ \Dot{\alpha}(t)=&-L\alpha(t)^{L-1}\scal{\frac{\w(t)}{\|\w(t)\||}}{\nabla \mathcal{L}(\bb(t))}.
     \label{eq: FWnormalvector-Flow-alpha-poly}
\end{align}

\begin{proposition}\label{prop:radialpoly}
Let assumption \ref{A: D1} hold. Consider the dynamics on $(\alpha,w)$ defined in \eqref{eq: FWnormalvector-Flow-u-poly}-\eqref{eq: FWnormalvector-Flow-alpha-poly} with initialization $(\alpha_0,w_0)$ such that $\alpha_0^L>0$ and $\|\w_0\|=\gamma_0>0$ and assume that assumption \ref{A: D3} holds for $\bb_0=\alpha_0^Lw_0/\|w_0\|$ and $b=0$. Then the differential equation in the original variable $\bb(t)=\alpha(t)^L w(t)/\|w(t)\|$ is
\begin{align}
\label{eq: eqqq}
\Dot{\bb}(t)&=-\|\bb(t)\|^2\left(\Id -\dfrac{\bb(t)\bb^{\top}(t)}{\|\bb(t)\|^{2}}\right) \nabla\mathcal{L}(\bb(t))-L^2\dfrac{\bb(t)\bb^{\top}(t)}{\|\bb(t)\|^{2/L}}\nabla\mathcal{L}(\bb(t)), \quad\quad  t>0.
\end{align}
This equation has a unique solution $\bb(t)$ defined on $[0,+\infty)$. Moreover, the results of Theorem~\ref{T: Main result} hold.
\end{proposition}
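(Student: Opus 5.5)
The plan is to obtain Proposition~\ref{prop:radialpoly} as a direct instance of Theorem~\ref{T: dynamics of x(t) Fwn} with $h(\alpha)=\alpha^L$. The only real work is to exhibit a radial mirror profile $f$ satisfying the compatibility relation \eqref{A5: f-g-h assumption radial}, and then to check the regularity and growth hypotheses of that theorem. Once this is done, Theorem~\ref{T: dynamics of x(t) Fwn} gives the form of the $\bb$-dynamics, the invariance $\|\w(t)\|=\gamma_0$, and Assumptions~\ref{A: D2} and \ref{A: D4}. Theorems~\ref{T: Existence} and \ref{T: Main result} then yield global existence, uniqueness and the convergence and implicit-bias conclusions.

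\textbf{Step 1: the mirror profile.} First I reduce to $\alpha_0>0$. If $L$ is even and $\alpha_0<0$, the substitution $\alpha\mapsto-\alpha$ maps the flow \eqref{eq: FWnormalvector-Flow-u-poly}--\eqref{eq: FWnormalvector-Flow-alpha-poly} to itself and leaves $\bb$ unchanged. Writing $s=h(\alpha)=\alpha^L$, we have $h'(\alpha)=L s^{(L-1)/L}$, i.e.\ $\varphi(s)=Ls^{1-1/L}$ in the notation of the remark after Theorem~\ref{T: dynamics of x(t) Fwn}. The relation \eqref{A5: f-g-h assumption radial} then becomes the linear ODE
\[
\frac{f''(s)}{f'(s)}=\frac{s^{2/L-1}}{\gamma_0^2L^2}.
\]
Integrating gives $f'(s)=\exp\bigl(s^{2/L}/(2L\gamma_0^2)\bigr)$, and I take $f(s)=\int_0^s f'(u)\,du$.

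\textbf{Step 2: the $\bb$-equation.} Substituting into the formula of Theorem~\ref{T: dynamics of x(t) Fwn}, the radial coefficient is
\[
\frac{f'(s)s}{\gamma_0^2f''(s)}=L^2 s^{2-2/L}.
\]
Combined with the factor $\bb\bb^\top/\|\bb\|^2$, this produces exactly the term $L^2\,\bb\bb^\top\|\bb\|^{-2/L}\nabla\mathcal L$ in \eqref{eq: eqqq}. The tangential term carries the factor $\|\bb\|^2/\gamma_0^2$, which matches \eqref{eq: eqqq} under the normalization $\gamma_0=1$ (otherwise a factor $1/\gamma_0^2$ multiplies the first term). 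As a cross-check, I would also verify the equation directly from $\dot\bb=h'(\alpha)\dot\alpha\,\w/\|\w\|+h(\alpha)\frac{d}{dt}(\w/\|\w\|)$, using $\|\w(t)\|\equiv\gamma_0$. The invariance holds because $\dot\w\perp\w$ by \eqref{eq: FWnormalvector-Flow-u-poly}.

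\textbf{Step 3: hypotheses.} We need $f'>0$ and $f''=f's^{2/L-1}/(\gamma_0^2L^2)>0$ on $(0,\infty)$. Next, $1/f'$ and $1/f''=\gamma_0^2L^2s^{1-2/L}e^{-s^{2/L}/(2L\gamma_0^2)}$ must be locally Lipschitz on $(0,\infty)$; both are $C^1$ there, so this holds. Since $f'\ge1$, for $s>r$ we get $f'(s)\ge 1>r/s$, so the lower-bound condition holds with $c=r$. Consequently $g(s)=sf'(s)/\gamma_0^2\ge r/\gamma_0^2$ for $s>r$. Finally $f'(s)\to\infty$ as $s\to\infty$, so Theorem~\ref{T: dynamics of x(t) Fwn} gives Assumption~\ref{A: D4} as well. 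Together with \ref{A: D1} and \ref{A: D3}, Assumption~\ref{Assumption} holds with $b=0$, and Theorems~\ref{T: Existence} and \ref{T: Main result} apply.

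\textbf{Main obstacle.} The delicate point is that \eqref{A5: f-g-h assumption radial} is imposed only for $\alpha>0$. I must ensure that $\alpha(t)$ never reaches $0$, where $h'$ degenerates and $\bb$ hits the singular point $b=0$. This is supplied by the bound $h(\alpha(t))\ge\tilde r>0$ of Theorem~\ref{T: dynamics of x(t) Fwn}. By continuity of $\alpha$, this bound keeps $\alpha(t)$ in $(0,\infty)$ for all $t$, so the computations of Steps 1--2 are valid along the whole trajectory.
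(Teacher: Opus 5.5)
Your proposal is correct and follows the paper's proof. Like the paper, you instantiate Theorem~\ref{T: dynamics of x(t) Fwn} with $h(\alpha)=\alpha^L$ and the profile $f'(s)=\exp\bigl(s^{2/L}/(2L\gamma_0^2)\bigr)$, check the positivity, local-Lipschitz and growth hypotheses, and invoke Theorems~\ref{T: Existence} and \ref{T: Main result}. Your side remarks also hold: the tangential coefficient is really $\|\beta\|^2/\gamma_0^2$, so the displayed equation in the statement tacitly takes $\gamma_0=1$, which the paper's proof passes over, and your choice $c=r$ and the sign reduction for even $L$ are harmless simplifications.
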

\begin{proof}
 Let $f\colon[0,+\infty)$ be such that
\begin{equation*}
  f'(s)=\exp\left(\frac{s^{2/L}}{2L\gamma_0^2}\right) \text{for every } s>0
\end{equation*} so that equation \eqref{A5: f-g-h assumption radial} is satisfied. 
Note that $f$ is twice differentiable on $(0,+\infty)$,  $f'(s)>0$, $f''(s)>0$ for all $s>0$, and $1/f''$ is locally Lipschitz. Let $r>0$. Then we have that $f'(s)>c/s$ for every $s>r$ with $c:=r\exp\left(\frac{r^{2/L}}{2L\gamma_0^2}\right)$.
Define $g(s)=sf'(s)$ for every $s\geq 0$. Then, Theorem \ref{T: dynamics of x(t) Fwn} implies that Assumptions~\ref{A: D2} is satisfied. Since $\lim_{s\to+\infty}f'(s)=+\infty$, also Assumption~\ref{A: D4} holds so we can apply Theorem \ref{T: Existence} and Theorem \ref{T: Main result}.  \hfill
\end{proof}

As we showed in the proof of the previous Proposition, Assumption~\ref{A: D4} is satisfied in this setting, so that we can derive an implicit bias by specializing the statement of Corollary~\ref{L: CWNS}. 

\paragraph{Implicit bias for $L=1$}
 The choice $L=1$, which corresponds to the weight normalization proposed by \cite{salimans2016weight}, in the setting of Corollary \ref{L: CWNS}, we obtain
\begin{align}
\bb_{\infty}=\bb_{*}+P(\bb_{0})\frac{\|\bb_{\infty}\|}{\|\bb_{0}\|}e^{\frac{\|\bb_{0}\|^2-\|\bb_{\infty}\|^{2}}{2\gamma_0^2}},
    \label{F: EFWN1}
\end{align}
where we recall that $P$ is the projection operator on the kernel of $X$ and that $\|\beta_0\|=\alpha_0$ . Note that when $\bb_{0}\in\ran(X^{\top})=\ker(X)^{\perp}$, then $\bb_{\infty}$ is equal to the minimal norm solution $\bb_{*}$. Our results guarantee that the trajectory exists for $t\to+\infty$, which is something usually assumed, and not proved.
The implicit bias for this example has been studied in \cite{wu2020implicit} and \cite{morwani2022inductive} for the least squares and the logistic loss, respectively. In \cite{wu2020implicit},  the convergence to a stationary point (zero loss or zero norm ) is analyzed in the continuous and discrete settings, and  the expression in \eqref{F: EFWN1} is derived. 

\paragraph{Implicit bias for $L=2$}
 The case $L=2$ is called squared weight normalization. 
 In the setting of Proposition~\ref{prop:radialpoly} we can explicitly compute the functions $f$ and $g$ since in this case 
$f'(s)=\exp{\frac{s}{4\gamma_0^2}}$.
 In the setting of Corollary \ref{L: CWNS}, we get that 
\begin{align*}
\bb_{\infty}=\bb_{*}+P(\bb_{0})\frac{\|\bb_{\infty}\|}{\|\bb_{0}\|}e^{\frac{\|\bb_{0}\|-\|\bb_{\infty}\|}{4\gamma_0^2}}.
\end{align*}  

\begin{remark}
    Note that in this case, to determine the implicit bis, not only the choice of $\beta_0$ matters, but also that of $\alpha_0$ and $w_0$. 
\end{remark}

\subsection{Exponential weight normalization.} \label{expweight} 
In this subsection we consider the case
$
h(\alpha)=e^{\frac{\alpha}{\tau}}
$
for some $\tau>0$. The following proposition holds.
\begin{proposition}\label{prop:radialpolyI}
 Let assumption \ref{A: D1} hold. Consider the dynamics on $(\alpha,w)$ defined in \eqref{eq: FWnormalvector-Flow-u}-\eqref{eq: FWnormalvector-Flow-alpha} with initialization $(\alpha_0,w_0)$ such that $\|\w_0\|=\gamma_0>0$ and assume that assumption \ref{A: D3} holds for $\bb_0=e^{\alpha_0/\tau}w_0/\|w_0\|$ and $b=0$. Then the differential equation in the original variable $\bb(t)=e^{\alpha(t)/\tau} w(t)/\|w(t)\|$ is
\begin{align}\label{eq:punto}
\Dot{\bb}(t)&=-\left[\frac{\|\bb(t)\|^2}{\gamma_0^2}\Id -\frac{\tau^2-\gamma_0^2}{\tau^2\gamma_0^2}{\bb(t)\bb^{\top}(t)}\right] \nabla\mathcal{L}(\bb(t)), \quad\quad  t>0.
\end{align} 
This equation has a unique solution $\bb(t)$ defined on $[0,+\infty)$, and the results of Theorem~\ref{T: Main result} hold.
\end{proposition}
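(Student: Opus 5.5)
The plan is to reduce everything to Theorem~\ref{T: dynamics of x(t) Fwn}, exactly as in the proof of Proposition~\ref{prop:radialpoly}. The only real work is to find a radial mirror function $f$ satisfying the compatibility relation \eqref{A5: f-g-h assumption radial} for $h(\alpha)=e^{\alpha/\tau}$. Since $h(\alpha)>0$ for every $\alpha$, the requirement $h(\alpha_0)>0$ holds automatically.

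\textbf{Step 1: construct $f$.} Because $h'(\alpha)=h(\alpha)/\tau$, we are in the situation of item (iv) of the remark following Theorem~\ref{T: dynamics of x(t) Fwn}, with $\varphi(s)=s/\tau$. The resulting ODE is
\[
\gamma_0^2 \frac{s^2}{\tau^2} f''(s)=s f'(s),
\qquad\text{that is,}\qquad
\frac{f''(s)}{f'(s)}=\frac{\kappa}{s},
\quad \kappa:=\frac{\tau^2}{\gamma_0^2}>0 .
\]
I will take the solution $f'(s)=s^{\kappa}$, so that $f(s)=s^{\kappa+1}/(\kappa+1)$. This $f$ maps $[0,+\infty)$ into $[0,+\infty)$ and is twice differentiable on $(0,+\infty)$.

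\textbf{Step 2: identify the dynamics.} Substituting into the formula of Theorem~\ref{T: dynamics of x(t) Fwn}, with $s=\|\bb\|$, the radial coefficient is
\[
\frac{f'(s)\,s}{\gamma_0^2 f''(s)}=\frac{s^2}{\gamma_0^2\kappa}=\frac{s^2}{\tau^2}.
\]
The right-hand side therefore becomes
\[
-\Big[\tfrac{s^2}{\gamma_0^2}\big(\Id-\tfrac{\bb\bb^\top}{s^2}\big)+\tfrac{1}{\tau^2}\bb\bb^\top\Big]\nabla\mathcal{L}(\bb).
\]
Regrouping gives $-\big[\tfrac{s^2}{\gamma_0^2}\Id-\big(\tfrac{1}{\gamma_0^2}-\tfrac{1}{\tau^2}\big)\bb\bb^\top\big]\nabla\mathcal{L}(\bb)$, and this is exactly \eqref{eq:punto}.

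\textbf{Step 3: verify the remaining hypotheses.} I need to check each condition of Theorem~\ref{T: dynamics of x(t) Fwn}.
\begin{enumerate}
\item Positivity: $f'(s)=s^\kappa>0$ and $f''(s)=\kappa s^{\kappa-1}>0$ on $(0,+\infty)$.
\item Regularity: $1/f'(s)=s^{-\kappa}$ and $1/f''(s)=s^{1-\kappa}/\kappa$ are $C^1$ on $(0,+\infty)$, hence locally Lipschitz there.
\item Growth: for $s>r$ we have $s f'(s)=s^{\kappa+1}>r^{\kappa+1}$, so the bound $f'(s)>c/s$ holds with $c:=r^{\kappa+1}$.
\end{enumerate}
Theorem~\ref{T: dynamics of x(t) Fwn} then gives three conclusions: $\|\w(t)\|=\gamma_0$, the flow \eqref{eq: x(t)flow} holds with $F(\bb)=\|\bb\|^{\kappa+1}/(\kappa+1)$ and $G(\bb)=\|\bb\|^{\kappa+1}/\gamma_0^2$, and Assumption~\ref{A: D2} holds with $b=0$. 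Since $f'(s)=s^\kappa\to+\infty$ as $s\to+\infty$, the last part of that theorem also yields Assumption~\ref{A: D4}. Together with the assumed \ref{A: D1} and \ref{A: D3}, Theorems~\ref{T: Existence} and \ref{T: Main result} apply and give the global unique solution and the asymptotic results.

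I expect no step to be a real obstacle; the only delicate point is solving the compatibility ODE with the correct exponent. As a minor remark, when $\kappa<1$ the function $f''$ blows up at $0$, but this is harmless because every condition is required only on $(0,+\infty)$, and Theorem~\ref{T: dynamics of x(t) Fwn} keeps the trajectory at norm at least $\tilde r>0$.
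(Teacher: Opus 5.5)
Your proposal is correct and follows the paper's proof. You choose the same mirror function $f(s)=\frac{\gamma_0^2}{\tau^2+\gamma_0^2}s^{(\tau^2+\gamma_0^2)/\gamma_0^2}$ and time warping $g(s)=s^{(\tau^2+\gamma_0^2)/\gamma_0^2}/\gamma_0^2$, make the same hypothesis checks, and appeal to Theorem~\ref{T: dynamics of x(t) Fwn}, with $f'\to+\infty$ giving Assumption~\ref{A: D4}, before invoking Theorems~\ref{T: Existence} and~\ref{T: Main result}; your explicit regrouping into \eqref{eq:punto} simply spells out a step the paper leaves implicit.
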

\begin{proof}
By solving equation \eqref{A5: f-g-h assumption radial}, we derive
\begin{equation*}
f(s)=\frac{\gamma_0^2}{\tau^{2}+\gamma_0^2}s^{(\tau^{2}+\gamma_0^2)/\gamma_0^2}, \quad \text{and}\quad g(s)=\frac{1}{\gamma_0^2}s^{(\tau^{2}+\gamma_0^2)/\gamma_0^2}.
\end{equation*}
On the interval $(0,+\infty)$, we have $f'>0$, $f''>0$,  $1/f'$ and $1/f''$ locally Lipschitz. In addition, $g(s)>r^{(\tau^2+\gamma_0^2)/\gamma_0^2}$ for every $s>r$. Then it follows from Theorem \ref{T: dynamics of x(t) Fwn} that $\bb(t)=h(\alpha(t))\frac{w(t)}{\norm{w(t)}}$ solves the differential equation \eqref{eq:punto}.
To apply Theorem \ref{T: Existence}, we note that assumption \ref{A: D4} is satisfied since $\lim_{s\to+\infty} f'(s)=+\infty$.\hfill \end{proof}

In the setting of Corollary \ref{L: CWNS}, we obtain that
\begin{align*}    \bb_{\infty}=\bb_{*}+P(\bb_{0})\left(\frac{\|\bb_{0}\|}{\|\bb_{\infty}\|}\right)^{(\tau^2-\gamma_0^2)/\gamma_0^2}.
\end{align*}

\begin{remark} When $(\tau^2+\gamma_0^2)/\gamma_0^2=3/2$, the mirror function $F$ is the same function obtained in Example \ref{Ex: MV}, showing that different reparameterizations may have the same implicit bias. Notice that the  dynamics for $\bb$ are different, since the time warping $G$ is not the same in the two cases.
\end{remark}

\subsection{Weight normalization of a fully connected network}\label{Sec: app WN2}In this section, we combine the two main examples presented in this article: fully connected linear networks and weight normalization.

 We consider a reparameterization involving a decomposition of the vector $\bb$ into the product of three terms: a scalar that represents the norm of $\bb$, and the product of a matrix by a vector, product that represents the direction of $\bb$ with unitary norm:
\begin{align}\label{eq:fullypolar} \bb=h(\alpha)\frac{\W\w}{\|\W\w\|},
\end{align}
where $h\colon \R\to\RR_{+}$ is a given function, while $\alpha\in\R$, $\W\in\RR^{p\times m}$ and $\w\in\RR^{m}$ are the new variables. The
associated gradient flow on the reparameterization is given by the following dynamical system:
\begin{align}
    \Dot{\alpha}(t)=&-\nabla_{\alpha}\mathcal{L}(\bb(t))=-h'(\alpha(t))\scal{\frac{\bb(t)}{\|\bb(t)\|}}{\nabla\mathcal{L}(\bb(t))}
     \label{eq: WMatrixVector-Flow-a}\\
      \Dot{\W}(t)=&-\nabla_{\W}\mathcal{L}(\bb(t))=-\frac{h(\alpha(t))}{\|\W(t)\w(t)\|}\left(\Id-\frac{\bb(t)\bb^{\top}(t)}{\|\bb(t)\|^{2}}\right)\nabla\mathcal{L}(\bb(t))\w^{\top}(t)\label{eq: WMatrixVector-Flow-U}\\
     \Dot{\w}(t)=&-\nabla_{\w}\mathcal{L}(\bb(t))=-\frac{h(\alpha(t))}{\|\W(t)\w(t)\|}\W^{\top}(t)\left(\Id-\frac{\bb(t)\bb^{\top}(t)}{\|\bb(t)\|^{2}}\right)\nabla\mathcal{L}(\bb(t)).\label{eq: WMatrixVector-Flow-v}
    \end{align}
 We consider, for simplicity of computations, an initialization $(\alpha_0,W_0,w_0)$ such that $\|\W_0\w_0\|=1$ and $\w_0\w_0^{\top}=\W_0^{\top}\W_0$.
 In the next theorem, we derive a time warping mirror flow on $\bb$ as in \eqref{eq: x(t)flow} from the gradient flow defined in \eqref{eq: WMatrixVector-Flow-a}-\eqref{eq: WMatrixVector-Flow-v} over $\alpha$, $\W$, and $\w$, respectively. We then apply Theorems \ref{T: Existence} and \ref{T: Main result}, providing the existence and uniqueness of solution for the dynamical system and convergence in value and in trajectory. In the case where the loss is a composition of a convex function and a linear operator, the limit point can be characterized as the Bregman projection on the solution set for a given mirror map, which depends on the choice of the reparameterization. 
\begin{theorem}\label{T: dynamics of x(t) aUV}
Let assumption \ref{A: D1} hold. Consider the overparameterized model defined in \eqref{eq:fullypolar} and the dynamic defined in \eqref{eq: WMatrixVector-Flow-a}-\eqref{eq: WMatrixVector-Flow-v} with initialization such that $\|\W_0\w_0\|=1$ and $\w_0\w_0^{\top}=\W_0^{\top}\W_0$ and assume that assumption \ref{A: D3} holds for $\bb_0=h(\alpha_0)(W_0w_0)/\|W_0w_0\|$.
Assume that there exists a function $f\colon [0,+\infty)\to[0,+\infty)$ twice differentiable on $(0,+\infty)$ such that $f''(s)>0$ for every $s>0$, $1/f''$ is locally Lipschitz on $(0,+\infty)$ and satisfying the following equality
\begin{align}
(h'(\alpha))^2=\frac{f'(h(\alpha))h(\alpha)}{f''(h(\alpha))}.
\label{A5: f-g-h assumption radial1}
\end{align}
Suppose in addition that for every $r>0$ there exists $c>0$ such that $f'(s)>c/s$ for every $s>r$  and 
define $g\colon \ \RR_{+}\to\RR$ as $g(s)=s f'(s)$. Then  assumption \ref{A: D2} is satisfied, and for every $t>0$, $\|\W(t)\w(t)\|=1$, $h(\alpha(t))>0$, and $\bb(t)$ satisfies \eqref{eq: x(t)flow}, with $F(\bb)=f(\|\bb\|)$ and $G(\|\bb\|)=g(\|\bb\|)$. Finally, if $\lim_{s\to+\infty} f'(s)=+\infty$, then Assumption~\ref{A: D4} holds.\end{theorem}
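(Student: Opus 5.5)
The plan is to reduce the combined model to the computations already carried out for the depth-two network and for weight normalization. There are four steps, taken in order.

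\textbf{Step 1: conservation laws.} Write $u(t):=\W(t)\w(t)$ and $\Pi(t):=\Id-\bb\bb^\top/\|\bb\|^2$. Since $\bb/\|\bb\|=u/\|u\|$, this is also the projector orthogonal to $u$.

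First, the balance identity. From \eqref{eq: WMatrixVector-Flow-U}--\eqref{eq: WMatrixVector-Flow-v},
\[
\W^\top\dot\W=-\tfrac{h}{\|u\|}\W^\top\Pi\nabla\mathcal{L}\,\w^\top=\dot\w\w^\top .
\]
Arguing exactly as in the proof for the fully connected network, this gives $\frac{d}{dt}(\W^\top\W-\w\w^\top)=0$, hence $\W^\top\W=\w\w^\top$ for all $t$.

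Second, the norm of $u$. We have
\[
\dot u=\dot\W\w+\W\dot\w=-\tfrac{h}{\|u\|}\bigl(\|\w\|^2\Id+\W\W^\top\bigr)\Pi\nabla\mathcal{L}.
\]
Balance gives $\W\W^\top=uu^\top/\|u\|$ and $\|\w\|^2=\|u\|$. Hence
\[
\dot u=-h\bigl(\Id+uu^\top/\|u\|^2\bigr)\Pi\nabla\mathcal{L}=-h\,\Pi\nabla\mathcal{L},
\]
because $u^\top\Pi=0$. Therefore $\frac{d}{dt}\|u\|^2=0$, and $\|u(t)\|=1$.

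\textbf{Step 2: the dynamics of $\bb$.} With $\|u\|=1$ we have $\bb=h(\alpha)u$. Differentiating,
\[
\dot\bb=h'(\alpha)\dot\alpha\,u+h\dot u=-(h')^2\frac{\bb\bb^\top}{\|\bb\|^2}\nabla\mathcal{L}-h^2\Pi\nabla\mathcal{L}.
\]
Now use \eqref{A5: f-g-h assumption radial1} with $h=\|\bb\|$, valid while $h(\alpha)>0$. The result is exactly \eqref{eq: MF-Norm} with $g(s)=sf'(s)$: indeed $g s/f'=s^2$ and $g/f''=sf'/f''=(h')^2$. This is the same form as in Theorem~\ref{T: dynamics of x(t) Fwn} with $\gamma_0=1$. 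Lemma~\ref{L: Hessian-Norm} then shows that $\bb$ solves \eqref{eq: x(t)flow} with $F=f(\|\cdot\|)$ and $G=g(\|\cdot\|)$.

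\textbf{Step 3: Assumption~\ref{A: D2}.} I will verify the hypotheses of Lemma~\ref{L: Hessian-Norm}. Positivity $f'>0$ follows from \eqref{A5: f-g-h assumption radial1}, or from $f'(s)>c/s$. Local Lipschitz continuity of $1/f'$ follows from $f''>0$ together with $f'\ge c/s$ on $[r,\infty)$. Also $g(s)=sf'(s)>c$ for $s>r$. These give Assumption~\ref{A: D2} with $b=0$.

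\textbf{Step 4: $h(\alpha(t))>0$ for all $t$ (the main obstacle).} The computation in Step 2 is only valid while $\bb\neq0$, so I will argue on the maximal interval where $h(\alpha(t))>0$. On that interval $\bb$ solves \eqref{eq: x(t)flow}, and $\mathcal{L}(\bb(t))$ is non-increasing by Theorem~\ref{T: Existence}. Using convexity and local Lipschitz continuity of $\nabla\mathcal{L}$ exactly as in the lower bound $h(\alpha(t))\ge\tilde r$ of Theorem~\ref{T: dynamics of x(t) Fwn}, I will show that
\[
\mathcal{L}(\bb)\le\mathcal{L}(\bb_0)<\mathcal{L}(0)
\]
forces $\|\bb\|\ge\tilde r>0$. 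A continuity argument then extends the interval to $[0,\infty)$. The main care needed is ruling out any escape before the flow is known to be well posed on $(\alpha,\W,\w)$. Local Lipschitz continuity of the right-hand side near $\|\W\w\|=1$ gives local existence, and the conserved quantities together with the bound on $\|\bb\|$ prevent blow-up.

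\textbf{Coercivity.} Finally, if $f'(s)\to\infty$, then
\[
D_F(z,\bb)\ge f(\|z\|)-f(\|\bb\|)+f'(\|\bb\|)\bigl(\|\bb\|-\|z\|\bigr).
\]
Convexity of $f$ gives $f(s)-f(0)\le sf'(s)$, so the right-hand side is at least $f(\|z\|)-f(0)+f'(\|\bb\|)\bigl(\cdot\bigr)\cdots$; more precisely,
\[
D_F(z,\bb)\ge f(\|z\|)-f(0)-f'(\|\bb\|)\|z\|+\dots .
\]
The cleanest route is to reuse the argument from Theorem~\ref{T: dynamics of x(t) Fwn}, giving $D_F(z,\bb)\ge f'(\|\bb\|/2)\|\bb\|/2-C\to\infty$, which is Assumption~\ref{A: D4}.
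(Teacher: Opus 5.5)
Your Steps 1--2 are the paper's proof essentially line by line: the balance law $\W^\top\W=\w\w^\top$, the identities $\W\W^\top=uu^\top/\|u\|$ and $\|\w\|^2=\|u\|$, the simplification $\dot u=-h\,\Pi\nabla\mathcal{L}$ giving $\|u\|\equiv 1$, and then \eqref{A5: f-g-h assumption radial1} together with Lemma~\ref{L: Hessian-Norm}. The paper stops there. It leaves positivity of $h(\alpha(t))$, Assumption~\ref{A: D2} and Assumption~\ref{A: D4} implicit, by analogy with Theorem~\ref{T: dynamics of x(t) Fwn}. Your Steps 3--4 supply them along the same lines, which is good, but three small repairs are needed.

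First, monotonicity of $\mathcal{L}(\bb(t))$ should not be taken from Theorem~\ref{T: Existence}, which presupposes Assumption~\ref{A: D4}. Use Lemma~\ref{L: Loss-Decreasing} instead, or simply $\frac{d}{dt}\mathcal{L}(\bb(t))=-\|\dot\theta(t)\|^2$ for the gradient flow in $\theta=(\alpha,\W,\w)$. That identity needs only $\W\w\neq 0$, so it also makes your bootstrap on $\{h(\alpha)>0\}$ unnecessary.

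Second, local Lipschitz continuity of $1/f'$ needs $f''$ bounded on compacts. This comes from $1/f''$ being continuous and positive, not from $f''>0$.

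Third, a lower bound on $\|\bb\|$ does not keep $\alpha$ from escaping. For global existence of the $\theta$-flow, use $\|\W\|_F^2=\|\w\|^2=1$ together with $\|\theta(T)-\theta_0\|\le\sqrt{T(\mathcal{L}(\bb_0)-\mathcal{L}^*)}$.

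The one genuine error is in the coercivity paragraph. Your intermediate estimate $D_F(z,\bb)\ge f(\|z\|)-f(0)-f'(\|\bb\|)\|z\|$ tends to $-\infty$, so it proves nothing. The bound $D_F(z,\bb)\ge f'(\|\bb\|/2)\|\bb\|/2-C$ is false in general. Take $f(s)=(1+s)\log(1+s)-s$, which satisfies every hypothesis imposed on $f$ and has $f'(s)=\log(1+s)\to+\infty$. Then $sf'(s)-f(s)=s-\log(1+s)$, hence $D_F(z,\bb)\le f(\|z\|)+\|\bb\|+\|z\|\log(1+\|\bb\|)$. This is $O(\|\bb\|)$, whereas $\tfrac{\|\bb\|}{2}\log\bigl(1+\tfrac{\|\bb\|}{2}\bigr)$ is superlinear, so your bound fails for large $\|\bb\|$ whatever $C$ is.

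Nor is this what the proof of Theorem~\ref{T: dynamics of x(t) Fwn} gives: that proof shows $F(\bb)/\|\bb\|\to+\infty$ and then invokes a lemma of Bauschke, Borwein and Combettes. Either reuse that argument as is, or argue directly from your first inequality. Fix $s_1>\|z\|$. For $\|\bb\|>s_1$, convexity of $f$ gives $f(\|\bb\|)\le f(s_1)+f'(\|\bb\|)(\|\bb\|-s_1)$, so
\[
D_F(z,\bb)\ \ge\ f(\|z\|)-f(s_1)+f'(\|\bb\|)\,(s_1-\|z\|)\ \longrightarrow\ +\infty .
\]
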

\noindent The proof is given in the Appendix in Subsection \ref{P: dynamics of x(t) aUV}.
\begin{remark}
\label{R: WN1} Choosing $h$, $f$, and $g$ as in Examples \ref{R: WN} and \ref{expweight}, the assumptions of Theorem~\ref{T: dynamics of x(t) aUV} are satisfied. Consequently, $\bb(t)$ satisfies the properties derived in Theorem \ref{T: Existence}, Theorem \ref{T: Main result} and Corollary \ref{Remark: Saving energy}.
\end{remark}

\section{Conclusion and Future Work}\label{sec:conclusion}
In this article, we studied a unified framework encompassing several reparemeterization schemes from the existing literature. We provided assumptions that guarantee the well-posedness of the dynamical system and convergence of the trajectory. This convergence is both in functional value and in the variable itself, that tends to a minimizer of the loss function avoiding the additional stationary point caused by the reparameterization. In the case where the function is composed by a linear operator, an explicit expression of the implicit bias is given. This comprehensive approach allows us to gain insights into the behavior of the time warped mirror flow, providing a more comprehensive understanding of its properties.\\ \\We also provide a criterion to determine, for a certain function that depends only on the norm, a suitable weight normalization parameterization. Finally, we apply the obtained results to a general multi-neuron fully connected linear network of depth 2, different schemes of weight normalization, and two different extensions of these two previous models. \\ \\
In the future, we would like to extend this work to models that are more closely related to neural networks and that introduce non-smooth activation functions. Possible extensions are: 1) Extend the results to more general models, such as neural networks; 2) Study new ways to train the overparameterization, for example Stochastic Gradient Descent or second-order methods; 3) Extend this approach to differential inclusions; 4) Study the problem in the matricial case.

\section{Acknowledgement}\label{s: Conclusion}
This project has been supported by the TraDE-OPT project, which received funding from the European Union’s Horizon 2020 research and innovation program under the Marie Skłodowska-Curie grant agreement No 861137. L. R. acknowledges the financial support of the European Research Council (grant SLING 819789), the AFOSR projects FA9550-18-1-7009 (European Office of Aerospace Research and Development), the EU H2020-MSCA-RISE project NoMADS - DLV-777826, and the Center for Brains, Minds and Machines (CBMM), funded by NSF STC award CCF-1231216. S. V. and L. R. acknowledge the support of the AFOSR project FA8655-22-1-7034. The research by C.V., S. V. and C. M. has been supported by the MIUR Excellence Department Project awarded to Dipartimento di Matematica, Universit\'a di Genova, CUP D33C23001110001. C.V. acknowledges the financial support of the Universidad de Tarapac\'a internal research project Fortalecimientos Grupos de Investigaci\'on C\'odigo 8802-25. S. V. and C. M. are members of the Gruppo Nazionale per l’Analisi Matematica, la Probabilità e le loro Applicazioni (GNAMPA) of the Istituto Nazionale di Alta Matematica (INdAM). This work represents only the view of the authors. The European Commission and the other organizations are not responsible for any use that may be made of the information it contains.

\printbibliography
\appendix
\section{Preliminaries}
\label{sec:NB}
We begin by introducing some notation used throughout this paper. \\ \\Matrices are denoted with uppercase letters and vectors with lowercase letters. For any positive integer $p$, we denote by $[p]$ the set of all integers from $1$ to $p$. The Hadamard product of two vectors $a$ and $b$ is represented as $a \astrosun b$, where each component is obtained by multiplying the corresponding components of $a$ and $b$: $(a \astrosun b)_{i} = a_i b_i$. For a positive scalar $L$, the vector $a^{\astrosun L}$ is defined as $\left(a^{L}_{i}\right)_{i=1}^{p}$, where each component is raised to the power of $L$. We denote by $\vec{\alpha}$ the $p$-dimensional vector which all components are equal to $\alpha$. The diagonal matrix $\Diag(a)$ is a square matrix with the components of the vector $a\in\RR^{p}$ on its main diagonal and zeros elsewhere. If $M$ is an invertible $p \times p$ matrix and $u$ and $v$ are two $p$-dimensional vectors such that $v^{\top}M^{-1}u \neq -1$, the Sherman–Morrison formula can be used to calculate $(M + uv^{\top})^{-1}$ as
\begin{align}\label{Sherman}
(M+uv^{\top})^{-1}=M^{-1}-\frac{M^{-1}uv^{\top}M^{-1}}{1+v^{\top}M^{-1}u}.
\end{align}
We denote by $M^{\top}$, $\ran(M)$, and $\ker(M)$, the transpose, the range, and the kernel of $M$, respectively.
A matrix $M\in \RR^{p\times p}$ is called positive definite if and only if $a^{\top}Ma>0$ for every $a\in\RR^{p}\backslash\{0\}$.
\\ \\The Euclidean norm in $\RR^{p}$ is denoted by $\|\cdot\|$.  For a given set $\mathcal{S}$, we denote by $\overline{\mathcal{S}}$, the closure of $\mathcal{S}$ and $\mathcal{S}^{\perp}$ the orthogonal complement if $\mathcal{S}$ is a linear subspace.

The Hessian of a function $F\hspace{1mm}\colon\dom(F)\mapsto \RR$ is denoted by $\nabla^{2}F$ and the Jacobian of $q:\RR^{d}\to \RR^{p}$ at $\theta \in \R^d$ is denoted as $J_{q}(\theta)$, which is a matrix in $\RR^{p\times d}$.

\begin{definition}\cite[Definition 1.17]{barreira2012ordinary} A function $f\colon U\to \RR^{d}$ in a set $U\subset\RR^{p}$ is said to be locally Lipschitz in $U$ if for each compact set $K\subset U$ there exists $L>0$ such that
 \begin{align}
     (\forall x,y\in K)\hspace{5mm}\|f(x)-f(y)\|\leq L\|x-y\|.\nonumber
 \end{align}
    \end{definition}
   
% \begin{lemma}(Descent Lemma) \cite[Lemma 1.30]{peypouquet2015convex}
%   If $f\colon \RR^{p}\mapsto \RR$ is G\^ateaux-differentiable and $\nabla f$ is
% Lipschitz-continuous with constant $L$, then
% \begin{align}
%     f(\bb)\leq f(z)+\scal{\nabla f(z)}{\bb-z}+\frac{L}{2}\|z-\bb\|^{2},\nonumber
% \end{align}
% for each $(\bb,z)\in\RR^{p}\times\RR^{p}.$
% \end{lemma}
\hspace{2mm}

The next Lemma provides a necessary condition for a vector field to be a gradient.
\begin{lemma}\cite[Corollary 10.6]{apostol1969multi}\label{poincare} Let $f\colon U \subseteq \RR^n \to \RR^n$, $f=(f_1,\ldots,f_{n})$ be a continuously differentiable vector field on an open set $U$. If there exists $g\colon U\to\RR$ such that $f=\nabla g$, then the partial derivatives of the components of $f$ are related by the equations:
\begin{align}\centering
  (\forall  i,j \in\{1,\ldots,n\}) (\forall \bb\in U) \qquad \frac{\partial f_{i}}{\partial \bb_{j}}(\bb)= \frac{\partial f_{j}}{\partial \bb_{i}}(\bb).
\end{align}
 \end{lemma}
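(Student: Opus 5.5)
This lemma is cited from Apostol, so the proof I have in mind is the standard one through equality of mixed partial derivatives. Suppose $g\colon U\to\RR$ satisfies $f=\nabla g$ on the open set $U$, so that $f_i=\partial g/\partial \bb_i$ for every $i\in\{1,\ldots,n\}$. Since each $f_i$ is continuously differentiable on $U$, the second partial derivatives of $g$ exist and are continuous on $U$:
\[
\frac{\partial^2 g}{\partial \bb_j\partial \bb_i}=\frac{\partial f_i}{\partial \bb_j}.
\]
Hence $g\in C^2(U)$.

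The key step is Schwarz's (Clairaut's) theorem on the symmetry of mixed partial derivatives for $C^2$ functions. Fix $\bb\in U$ and indices $i\neq j$ (the case $i=j$ is trivial). Because $U$ is open, there is a small square in the $(\bb_i,\bb_j)$-plane through $\bb$ that is contained in $U$. On that square, consider the second difference
\[
\Delta(h,k)=g(\bb+he_i+ke_j)-g(\bb+he_i)-g(\bb+ke_j)+g(\bb).
\]
Applying the mean value theorem twice, first in the $i$-direction and then in the $j$-direction, gives $\Delta(h,k)=hk\,\partial_j\partial_i g(\xi)$ for some point $\xi$ in the rectangle. Applying it in the opposite order gives $\Delta(h,k)=hk\,\partial_i\partial_j g(\eta)$ for some other point $\eta$ in the rectangle. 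Dividing by $hk$ and letting $(h,k)\to 0$, continuity of the second partials yields
\[
\partial_j\partial_i g(\bb)=\partial_i\partial_j g(\bb),
\]
which is exactly $\partial f_i/\partial\bb_j=\partial f_j/\partial\bb_i$. The only point that needs care is the continuity of the mixed partials, and this is guaranteed by the $C^1$ hypothesis on $f$. There is no real obstacle; the argument is classical.

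This is the form in which the paper uses the lemma. If one had $\nabla F=\psi$ with $\nabla\psi=K^{-1}$, then $K(\bb)^{-1}$ would be the Jacobian of the gradient field $\nabla F$ and would therefore have to be symmetric with symmetric derivatives. Equivalently, each row of $K^{-1}$ must satisfy the curl-free condition above, and a violation of this condition rules out the existence of a mirror function $F$.
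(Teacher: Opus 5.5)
Your proof is correct and matches the intended argument: the paper gives no proof of its own and simply cites Apostol, where the result is obtained exactly as you do, by noting that $f_i=\partial g/\partial \bb_i$ makes $g$ of class $C^2$ and then invoking the symmetry of mixed partial derivatives. Your second-difference and mean-value sketch of Schwarz's theorem just spells out the classical step that the cited source takes from an earlier theorem.
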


\section{Examples}\label{ex: UV and MV}
The following examples have been adapted from \cite[Theorem 1]{chou2023more}, \cite[Theorem 1]{arora2018optimization}, respectively. The main objective is to illustrate how to use equation \eqref{eq:MF0} and its limitations. First, in example \ref{Ex: uL}, we study the case when the reparameterization is separable and find the mirror map is trivial. Second, in example \ref{Ex: UV}, we give a scenario where the derivation of the mirror flow is not straightforward and requires a technical effort. 
\begin{example}\label{Ex: uL}
Consider a $L$-layer's overparameterized vector factorization, which can be mathematically expressed as $\bb=q(\theta)=\theta^{\astrosun L}\in\RR^{p}$ with $L\geq 2$. We define the flow over $\theta$ as:
\begin{align}\Dot{\theta}(t)= -\nabla_\theta \cL (q(\theta(t)))=-L\nabla\mathcal{L}(\bb(t))\astrosun \theta^{\astrosun (L-1)}(t),\nonumber\end{align}
where the last equality comes from the chain rule.\\ \\
Note that $q$ satisfies \eqref{eq:MF0} with $F$ equal to:
\begin{align}
    F\colon\hspace{2mm} & \bb\mapsto\left\{\begin{array}{cc}
       \frac{1}{4}\sum\limits_{n=1}^{k}\left(\bb_{n}\log(\bb_{n})-\bb_{n}\right)  &  \text{If }L=2\\
       \frac{1}{2L(2-L)}\sum\limits_{n=1}^{k}\bb^{\frac{2}{L}}_{n}  & \text{If }L>2.
    \end{array}\right., \label{eq:had}
\end{align}
Indeed, if we compute $J_{q}(\theta)J_{q}(\theta)^{T}$, we get:
\begin{align}J_{q}(\theta)J_{q}(\theta)^{T}=\left(q'(\theta)\right)^{\astrosun 2}=L^{2}\theta^{\astrosun (2L-2)}=L^{2}\bb^{\astrosun (2-\frac{2}{L})}=\left[\nabla^{2}F(\bb)\right]^{-1}. \nonumber\end{align}Then, $\bb(t)$ follows the dynamic:
\begin{align}\Dot{\bb}(t)=-\left[\nabla^{2}F(\bb(t))\right]^{-1}\nabla\mathcal{L}(\bb(t)).\nonumber\end{align}
The previous flow coincides with the one derived in \cite{chou2023more,woodworth2020kernel}.  
\end{example}
\begin{example}
\label{Ex: UV} 
Consider the two layers vector-vector parameterization; i.e., $\bb=q(\theta)$ with $\theta=(\theu,\thed)\in \RR^k \times \RR^k$ and $q(\theta)=\theu\astrosun\thed$, which is indeed an overparameterization because there are more learnable parameters than variables to predict. Then, the gradient flow on the reparameterization is given by 
\begin{align}
     \Dot{\theu}(t)&=-\nabla_{\theu}\mathcal{L}(\theu(t) \astrosun \thed(t))=-\nabla\mathcal{L}(\bb(t))\astrosun \thed(t);\nonumber\\ 
     \Dot{\thed}(t)& =-\nabla_{\thed}\mathcal{L}(\theu(t) \astrosun \thed(t))=-\nabla\mathcal{L}(\bb(t))\astrosun \theu(t) \hspace{12mm}\text{ with }\theu^{\astrosun 2}(0)-\thed^{\astrosun 2}(0)=\overrightarrow{\gamma},
     \nonumber
\end{align}
 where $\overrightarrow{\gamma}\in\RR^p$. Let us compute the Jacobian of $q$
\begin{align}J_{q}(\theta)=\left[\Diag(\thed) \hspace{2mm}\Diag(\theu)\right]
\text{ and }\hspace{2mm}
    J_{q(\theta)}(\theta)J^{\top}_{q(\theta)}(\theta)=\Diag\left(\theu^{\astrosun2}+\theu^{\astrosun2}\right).\nonumber
\end{align} 
It is not straightforward to express the previous in terms of $\bb$. However, if we multiply the equation of $\Dot{\theu}(t)$ component-wise by $\theu(t)$ and we do the same for $\Dot{\thed}(t)$ by $\thed(t)$, we get:
\begin{align}
     \theu(t)\astrosun\Dot{\theu}(t)=-\nabla_{
     \bb}\mathcal{L}(\bb(t))\astrosun \theu(t)\astrosun \thed(t)=\thed(t)\astrosun\Dot{\thed}(t).\nonumber
\end{align}
From the previous equality and the initialization $\theu^{\astrosun 2}(0)-\thed^{\astrosun 2}(0)=\overrightarrow{\gamma}$, we deduce that, for every $t\geq 0$,
\begin{align}
 \theu^{\astrosun 2}(t)=\thed^{\astrosun 2}(t)+\overrightarrow{\gamma}.
 \label{theta1}
 \end{align}
 Multiplying the latter component-wise by $\thed^{\astrosun 2}(t)$ and recalling that $\bb^{\astrosun 2}(t)=\theu(t)\astrosun\thed(t)$, we obtain the following biquadratic equation:
 \begin{align}
      \thed^{\astrosun 4}(t)+\overrightarrow{\gamma}\astrosun \thed^{\astrosun 2}(t)-\bb^{\astrosun 2}(t)=0.\nonumber
 \end{align}
Since $\thed^{\astrosun 2}(t)\geq 0$, the previous equation has only one positive solution and from \eqref{theta1}, we have that:
\begin{align}
         \thed^{\astrosun 2}(t)=\frac{-\overrightarrow{\gamma}+\sqrt{\overrightarrow{\gamma}^{\astrosun 2}+4\bb^{\astrosun 2}(t)}}{2}\hspace{2mm} \text{ and } \hspace{2mm} \theu^{\astrosun 2}(t)=\frac{\overrightarrow{\gamma}+\sqrt{\overrightarrow{\gamma}^{\astrosun 2}+4\bb^{\astrosun 2}(t)}}{2}.\nonumber
\end{align}
Consider the following entropy function:
\begin{align}
    F(\bb):=\frac{1}{4}\sum\limits_{n=1}^{k}\left(2\bb_{n}\log\left(2\bb_{n}+\sqrt{\overrightarrow{\gamma}^{2}+4\bb_{n}^{2}}\right)-\sqrt{\overrightarrow{\gamma}^{2}+4\bb_{n}^{2}}\right). \nonumber
\end{align}
From the definition of the derivative of the product and the computation of the gradient it follows  that
\begin{align}
    \dot{\bb}(t)=\Dot{\theu}(t)\astrosun\thed(t)+\theu(t)\astrosun\Dot{\thed}(t)=-\left(\theu^{\astrosun 2}+\thed^{\astrosun 2}\right)\astrosun \nabla\mathcal{L}(\bb(t))=&-\sqrt{\overrightarrow{\gamma}^{2}+4\bb^{2}} \ \astrosun \ \nabla\mathcal{L}(\bb(t))\nonumber\\=&-\left[\nabla^{2}F(\bb(t))\right]^{-1}\nabla\mathcal{L}(\bb(t)).\nonumber
\end{align}
For more precise results about convergence and implicit bias in this example, see \cite{li2022implicit, varre2023spectral, kunin2024rich, azulay2021implicit}.
\end{example}

\section{Proofs}
\subsection{Proof of Theorem \ref{T: Existence}}\label{Append: Well-posedness}

In order to prove Theorem \ref{T: Existence}, we establish an auxiliary result. It states that both the Bregman distance and the loss function, along the trajectory of the flow, do not increase. 
\begin{lemma}\label{L: Loss-Decreasing} Suppose that there exists a solution $\bb(t)$ of \ref{eq: x(t)flow} for $t \in \left[0,T\right)$. Suppose that Assumptions \ref{A: D2} and \ref{A: D1} hold, $\bb(t) \in \RR^p \setminus\left\{b\right\}$ for every $t \in \left[0,T\right)$. Then, both the Bregman divergence $D_{F}(z,\bb(t))$ for every $z\in \mathcal{S}$ and the functional value $\mathcal{L}(\bb(t))$  are non-increasing functions for $t\in [0,T)$.
\end{lemma}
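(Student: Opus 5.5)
The plan is to differentiate both quantities along the trajectory and read off the sign of each derivative from the dynamics \eqref{eq: x(t)flow}, together with convexity of $\mathcal{L}$ and positive definiteness of $\nabla^2 F$ away from $b$.

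Since $\bb(t)\neq b$ on $[0,T)$, Assumption~\ref{A: D2} makes $F$ twice differentiable at $\bb(t)$ with $\nabla^2F(\bb(t))$ invertible. The flow can therefore be written explicitly as
\[
\dot\bb(t)=-G(\bb(t))\left[\nabla^2F(\bb(t))\right]^{-1}\nabla\mathcal{L}(\bb(t)),
\]
and $\bb$ is $C^1$ on $[0,T)$.

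\textbf{Bregman divergence.} Fix $z\in\mathcal{S}$. By definition \eqref{Def:BDA},
\[
\frac{d}{dt}D_F(z,\bb(t))=-\scal{\nabla F(\bb)}{\dot\bb}-\scal{\nabla^2F(\bb)\dot\bb}{z-\bb}+\scal{\nabla F(\bb)}{\dot\bb}=-\scal{\nabla^2F(\bb)\dot\bb}{z-\bb},
\]
where the second term uses the symmetry of the Hessian. Substituting the equation gives
\[
\frac{d}{dt}D_F(z,\bb(t))=G(\bb)\scal{\nabla\mathcal{L}(\bb)}{z-\bb}.
\]
By convexity, $\scal{\nabla\mathcal{L}(\bb)}{z-\bb}\le \mathcal{L}(z)-\mathcal{L}(\bb)=\mathcal{L}^*-\mathcal{L}(\bb)\le 0$, and $G>0$, so the derivative is $\le 0$.

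\textbf{Loss.} Along the flow,
\[
\frac{d}{dt}\mathcal{L}(\bb(t))=\scal{\nabla\mathcal{L}(\bb)}{\dot\bb}=-G(\bb)\scal{\nabla\mathcal{L}(\bb)}{[\nabla^2F(\bb)]^{-1}\nabla\mathcal{L}(\bb)}.
\]
This is $\le 0$ provided $[\nabla^2F(\bb)]^{-1}$ is positive semidefinite.

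\textbf{Main obstacle.} The only delicate point is showing that $\nabla^2F(\bb)$ is positive definite, not merely invertible, for $\bb\neq b$. Strict convexity gives $\nabla^2F\succeq 0$ on the open set $\RR^p\setminus\{b\}$, via the second-order characterization of convexity on a small ball around $\bb$ (which avoids $b$). A positive semidefinite invertible symmetric matrix is positive definite, so its inverse is positive definite as well. Hence $\frac{d}{dt}\mathcal{L}(\bb(t))\le0$. Finally, monotonicity follows from the sign of these derivatives on $[0,T)$ by the mean value theorem.
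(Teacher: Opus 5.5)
Your proof is correct and follows the paper's argument. You differentiate $D_F(z,\bb(t))$ to get $G(\bb)\scal{\nabla\mathcal{L}(\bb)}{z-\bb}\le 0$ by convexity, and you differentiate $\mathcal{L}(\bb(t))$ to get $-G(\bb)\scal{\nabla\mathcal{L}(\bb)}{[\nabla^2F(\bb)]^{-1}\nabla\mathcal{L}(\bb)}\le 0$; your added justification that the inverse Hessian is positive definite (convexity on a ball avoiding $b$, plus invertibility) fills in a step the paper only asserts parenthetically.
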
\vspace{3mm}
\begin{proof}
The proof is the same in both cases. To prove that $t\mapsto D_{F}(z,\bb(t))$ is non-increasing, it is sufficient to note that, for every $t\in [0,T)$,
\begin{align}
\frac{d}{dt} D_{F}(z,\bb(t))=&\hspace{1mm}-\scal{\nabla F(\bb(t))}{\dot{\bb}(t)} -\scal{\nabla^{2} F(\bb(t))\dot{\bb}(t)}{z-\bb(t)}+\scal{\nabla F(\bb(t))}{\dot{\bb}(t)}\nonumber\\
=&\hspace{1mm}-\scal{\nabla^{2} F(\bb(t))\dot{\bb}(t)}{z-\bb(t)}\nonumber\\
=&\hspace{1mm}G(\bb(t))\scal{\nabla\mathcal{L}(\bb(t))}{z-\bb(t)}\label{eq: derivative DF}\\ 
\leq& \ G(\bb(t))(\mathcal{L}(z)-\mathcal{L}(\bb(t)))\nonumber\\ \leq&\hspace{1mm} 0,\nonumber
\end{align}
where we used that $\bb$ is a solution of \eqref{eq: x(t)flow}, the gradient inequality for the convex function $\mathcal{L}$ and the fact that, for every $t\in [0,T)$,
 $\mathcal{L}(\bb(t))\geq \mathcal{L}(z)$ and $G(\bb(t))\geq 0$.
On the other hand, for $t\in [0,T)$, $\nabla^2 F(\bb(t))$ is invertible (and the inverse is positive definite). Then,
\begin{align}
    \frac{d}{dt} \mathcal{L}(\bb(t))=&\hspace{1mm}\scal{\nabla\mathcal{L}(\bb(t))}{\dot{\bb}(t)}\nonumber  \\=&\hspace{1mm}- G(\bb(t))\scal{\nabla\mathcal{L}(\bb(t))}{ \left[\nabla^2 F(\bb(t))\right]^{-1}\nabla\mathcal{L}(\bb(t))}\nonumber\\ \leq &\hspace{1mm}0,\nonumber
\end{align}
which completes the proof.\hfill\end{proof}

We state another preliminary result, which will be crucial for proving Theorem~\ref{T: Existence} under Assumption~\ref{Assumption}. It consists in a lower bound for the norm of $\bb-b$ along the trajectory.

\begin{lemma}\label{L: Bounded away x(t)}
Suppose that assumptions \ref{A: D2}, \ref{A: D1}, \ref{A: D3} hold. Suppose that there exists a solution $\bb(t)$ of \ref{eq: x(t)flow} for $t \in \left[0,T\right)$. Set $$r:=\frac{\mathcal{L}(b)-\mathcal{L}(\bb_0)}{2 \|\nabla \mathcal{L}(b)\|}.$$ 
Then, for every $t\in[0,T)$,
\begin{equation}\label{Eq: Bounded BelowG}
    \|\bb(t)-b\| > r \quad \text{and} \quad G(\bb(t))\geq C(r).
\end{equation}
\end{lemma}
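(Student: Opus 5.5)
We combine the monotonicity of the loss from Lemma \ref{L: Loss-Decreasing} with a local Lipschitz-type estimate of $\mathcal{L}$ near $b$.

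First, note that a solution of \eqref{eq: x(t)flow} on $[0,T)$ must take values in $\RR^p\setminus\{b\}$, since $\nabla^2F$ and $G$ are only defined there. Lemma \ref{L: Loss-Decreasing} then gives $\mathcal{L}(\bb(t))\le\mathcal{L}(\bb_0)$ for every $t\in[0,T)$. Hence it suffices to show that every point $\bb$ with $\|\bb-b\|\le r$ satisfies $\mathcal{L}(\bb)>\mathcal{L}(\bb_0)$.

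To bound $\mathcal{L}$ from below near $b$, we cannot use convexity: the gradient inequality at $b$ bounds $\mathcal{L}$ from below by $\mathcal{L}(b)+\langle\nabla\mathcal{L}(b),\bb-b\rangle$ only in the wrong direction for our purpose. What we need is a lower bound
\[
\mathcal{L}(\bb)\ \ge\ \mathcal{L}(b)-\|\bb-b\|\sup_{\xi\in[b,\bb]}\|\nabla\mathcal{L}(\xi)\|,
\]
which follows from the mean value theorem. On a ball around $b$, continuity of $\nabla\mathcal{L}$ (it is locally Lipschitz by \ref{A: D1}) gives $\|\nabla\mathcal{L}(\xi)\|\le 2\|\nabla\mathcal{L}(b)\|$ for $\xi$ close enough to $b$. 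Here $\nabla\mathcal{L}(b)\neq0$, because $b\notin\mathcal{S}$. With this bound, for $\|\bb-b\|\le r$,
\[
\mathcal{L}(\bb)\ \ge\ \mathcal{L}(b)-2r\|\nabla\mathcal{L}(b)\| \;=\; \mathcal{L}(\bb_0).
\]
Strictness would come from the strict inequality $\|\nabla\mathcal{L}(\xi)\|<2\|\nabla\mathcal{L}(b)\|$ near $b$.

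\textbf{Main obstacle.} The constant $2$ in the gradient bound is only valid on some neighbourhood of $b$, of radius $\delta$ fixed by the modulus of continuity of $\nabla\mathcal{L}$ at $b$. That radius need not exceed $r$. The fix I would try first is to work with $\min(r,\delta)$ in place of $r$, or to use the local Lipschitz constant $L_b$ on $\mathbb{B}(b,1)$ to make the estimate explicit. If the statement must hold with exactly this $r$, I would attempt a continuity argument instead. Let $t^*$ be the first time with $\|\bb(t^*)-b\|=r$; one then needs the sublevel set $\{\mathcal{L}\le\mathcal{L}(\bb_0)\}$ to avoid the sphere of radius $r$ around $b$. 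I expect this step to be the delicate one, and I would check whether the authors' proof implicitly uses a Lipschitz bound on $\nabla\mathcal{L}$ near $b$.

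Once $\|\bb(t)-b\|>r$ is established, the second claim is immediate: $\bb(t)\notin\mathbb{B}(b,r)$, so \ref{A: D2} gives $G(\bb(t))\ge C(r)$.
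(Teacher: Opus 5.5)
\textbf{There is a genuine gap, and it starts where you set convexity aside.} You say the gradient inequality points the wrong way, but it does not. The inequality at $b$,
\[
\mathcal{L}(\bb)\ \ge\ \mathcal{L}(b)+\scal{\nabla\mathcal{L}(b)}{\bb-b},
\]
is a lower bound on $\mathcal{L}(\bb)$ valid for every $\bb\in\RR^p$, which is exactly the direction you need. Combined with Cauchy--Schwarz, it gives for every $\bb$ with $\|\bb-b\|\le r$
\[
\mathcal{L}(\bb)\ \ge\ \mathcal{L}(b)-r\|\nabla\mathcal{L}(b)\|\ =\ \frac{\mathcal{L}(b)+\mathcal{L}(\bb_0)}{2}\ >\ \mathcal{L}(\bb_0),
\]
where the strict inequality is \ref{A: D3}. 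This is the whole of the paper's argument. The factor $2$ in $r$ is there to produce strictness, not to absorb growth of $\|\nabla\mathcal{L}\|$. No information about $\nabla\mathcal{L}$ away from $b$ is needed. The rest of your plan matches the paper: $\bb(t)\neq b$, monotonicity of $\mathcal{L}(\bb(t))$ from Lemma~\ref{L: Loss-Decreasing}, the contradiction, and then $G(\bb(t))\ge C(r)$ from \ref{A: D2}.

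\textbf{Why your route does not reach the stated $r$.} Replacing the gradient inequality by the mean value theorem with $\sup_{\xi\in[b,\bb]}\|\nabla\mathcal{L}(\xi)\|$ forces you to need $\|\nabla\mathcal{L}\|\le 2\|\nabla\mathcal{L}(b)\|$ on all of $\mathbb{B}(b,r)$. Nothing in the assumptions guarantees this. As you note yourself, it holds only on a ball of some radius $\delta$, possibly smaller than $r$. Your fallback with $\min(r,\delta)$ proves a weaker statement: a different radius, and hence a different constant $C(\cdot)$ feeding into the rate of Theorem~\ref{T: Main result}. The continuity argument you mention has no ingredient that would keep the sublevel set $\{\mathcal{L}\le\mathcal{L}(\bb_0)\}$ off the sphere of radius $r$. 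So as written, the proposal does not prove the lemma.

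\textbf{How to fix it inside your framework.} Convexity repairs the argument even within the mean-value setup. By monotonicity of $\nabla\mathcal{L}$, the map $s\mapsto\scal{\nabla\mathcal{L}(b+s(\bb-b))}{\bb-b}$ is nondecreasing on $[0,1]$. Hence
\[
\mathcal{L}(\bb)-\mathcal{L}(b)=\int_0^1\scal{\nabla\mathcal{L}(b+s(\bb-b))}{\bb-b}\,ds\ \ge\ \scal{\nabla\mathcal{L}(b)}{\bb-b},
\]
which is the gradient inequality again. With it, the supremum over the segment, and with it the obstacle, disappears.
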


\begin{proof}
Let $\bb \in \R^p$ be such that $\|\bb-b\|\leq r$. Then, the gradient inequality for the convex function $\mathcal{L}$ at the point $0$, the Cauchy-Schwarz inequality, and Assumptions~\ref{A: D3} imply
\begin{equation}\label{eq: Bounded away L1}
\mathcal{L}(\bb) \geq \mathcal{L}(b)+\scal{\nabla \mathcal{L}(b)}{\bb-b}  \geq \ \mathcal{L}(b) - r \norm{\nabla \mathcal{L}(b)} \geq \frac{\mathcal{L}(b)+\mathcal{L}(\bb_0)}{2} > \mathcal{L}(\bb_0). 
\end{equation}
Next, by contradiction, suppose that there exists a $\hat{t}\in [0,T)$ such that $\|\bb\big(\hat{t}\big)-b\|\leq r$. 
Then, by \eqref{eq: Bounded away L1}, we have that $\mathcal{L}(\bb_0) ~<~\mathcal{L}(\bb(\hat{t}))$. This is a contradiction since by Lemma~\ref{L: Loss-Decreasing} $t\mapsto\mathcal{L}(\bb(t))$ is non-increasing on $[0,T)$. Then $\|\bb\left(t\right)-b\|>r$ for every $t\in[0,T)$. The second part of the statement in \eqref{Eq: Bounded BelowG} is a consequence of the condition on $G$ assumed in \ref{A: D3}. 
\hfill\end{proof}
\ \\

\begin{proof}[Theorem \ref{T: Existence}]\\
\ \\
\textbf{Existence.}
Denote by $T_{M}$ the maximal time for the existence of a solution of \eqref{eq: x(t)flow} in $\RR^{p}\setminus\left\{b\right\}$; namely,
    \begin{align}
        T_{M}:=\sup\left\{T>0\mid \exists \text{ a } \, C^1 \text{ function } \bb\colon [0,T)\to \RR^{p}\setminus\left\{b\right\}  \text{ solution of \eqref{eq: x(t)flow} }\right\}.\nonumber
    \end{align} 
First we show that $T_M>0$. Since $\nabla^2 F$ is invertible on $\RR^{p}\setminus\left\{b\right\}$, the dynamical system in \eqref{eq: x(t)flow} is equivalent to the following:
\begin{align}
    \label{eq: x(t)flowI}
\begin{cases}
\dot{\bb}(t)=-G(\bb(t))
\left [\nabla^{2}F(\bb(t))\right]^{-1}\nabla\mathcal{L}(\bb(t)), \quad\quad  0<t<T_M\\
\bb(0)=\bb_0\in \RR^{p}\setminus\left\{b\right\}.
\end{cases}
\end{align}
 Since $G(\bb)$, $\left[\nabla^{2}F(\bb)\right]^{-1}$, and $\nabla \LLL(\bb)$ are locally Lipschitz on $\RR^{p}\setminus\left\{b\right\}$, they are continuous and bounded on bounded subsets and their product is also locally Lipschitz. Since $\bb_{0}\in \RR^{p}\setminus\left\{b\right\}$, it follows from \cite[Theorem 1.18]{barreira2012ordinary} that there exists a unique solution of the initial value problem \eqref{eq: x(t)flowI}, and so to \eqref{eq: x(t)flow}, in some time-interval. Then, $T_M>0$. Now we proceed to prove the claim that $T_M=+\infty$; namely, that the solution of the dynamical system~\eqref{eq: x(t)flow} is globally defined. Lemma~\ref{L: Bounded away x(t)} yields $\norm{\bb(t)-b}>r$ for every $t\in\left[0,T_M\right)$. Lemma \ref{L: Loss-Decreasing} implies that $D_{F}(z,\bb(t))$ is non-increasing for every $z\in \mathcal{S}$ and, by Assumption \ref{A: D4}, $D_{F}(\bar{z},\cdot)$ is coercive for some $\bar{z}\in\mathcal{S}$. Then, $\bb([0,T_{M}))$ is bounded. Therefore, $\bb([0,T_{M}))$ is contained in a compact subset of $\RR^{p}\setminus\left\{b\right\}$. We then derive from \cite[Proposition 1.57]{barreira2012ordinary} that $T_M=+\infty$. \\
\textbf{Uniqueness.} Since the right-hand side of \eqref{eq: x(t)flowI} is locally Lipschitz
at every point, local uniqueness holds for every initial state, and therefore globally.\hfill 
\end{proof}
\vspace{4mm}
\subsection{Proof of Theorem \ref{T: Main result}}\label{Append: Main result}

Let $z\in \mathcal{S}$. By the Fundamental Theorem of Calculus, inequality \eqref{eq: derivative DF}, we have that, for all $t\geq 0$,
\begin{align}\label{breg_div}
   D_{F}(z,\bb(t))-D_{F}(z,\bb_0)=&\int_{0}^{t}\frac{d}{ds}D_{F}(z,\bb(s)) \ ds\\
   \leq& -\int_{0}^{t}G(\bb(s))(\LLL(\bb(s))-\LLL^{*})ds \nonumber \\
   \leq& -C(r)\int_{0}^{t}(\LLL(\bb(s))-\LLL^{*})ds
   \leq0\nonumber,
   \end{align} 
where the function $C$ is the one in Assumption \ref{A: D2} and $r$ is defined as in Lemma \ref{L: Bounded away x(t)}.
Since $D_{F}(z,\bb(t))$ is non-negative, from the above inequality we derive that, for every $t\geq 0$,
\begin{equation*}
   \int_{0}^{t}(\LLL(\bb(s))-\LLL^{*})ds\leq \frac{D_{F}(z,\bb_0)}{C(r)}.
  \end{equation*}
In particular, $t\mapsto \left[\LLL(\bb(t))-\LLL^{*}\right]$ belongs to $L^1\left(\left[0,+\infty\right)\right)$. Moreover, since $t\mapsto \LLL(\bb(t))$ is non-increasing by Lemma \ref{L: Loss-Decreasing}, we have that $\left(\LLL(\bb(t))-\LLL^{*}\right)\cdot t \leq \int_{0}^t (\LLL(\bb(s))-\LLL^{*})ds\leq D_{F}(z,\bb_0)/C(r)$ for every $t\geq 0$. From the latter, we obtain the rate stated in the theorem.\\
Next, we prove the convergence of the trajectory $(\bb(t))_{t>0}$. Let $\bar{z}$ be the point in Assumption \ref{A: D4}. By Lemma \ref{L: Loss-Decreasing}, $D_{F}(\bar{z},\bb(t))$ is (non-negative and) non-increasing, and therefore there exists $\lim\limits_{t\rightarrow +\infty}D_{F}(\bar{z},\bb(t))\in\RR$. Since $D_{F}(\bar{z},\cdot)$ is coercive, we have that $\bb(t)$ is bounded. Then, there exist a sequence $\left(t_{k}\right)_{k\geq0}$ and a point $\bb_*$ such that, for $k\to +\infty$, $t_k\to +\infty$ and $\bb(t_k)\to\bb_*$. Since $\LLL$ is continuous and $\LLL(\bb(t))-\LLL^{*} \leq D_{F}(z,\bb_0)/(ct)$, we deduce that 
\begin{equation*}\LLL(\bb_*)=\lim\limits_{k\to + \infty} \LLL(\bb(t_k))= \lim\limits_{t\to +\infty} \LLL(\bb(t))=\LLL^*;\end{equation*} 
i.e., that $\bb_*\in\mathcal{S}$. We are going to show that indeed $\lim\limits_{t\rightarrow \infty}\bb(t) =  \bb_*$. First notice that $\beta_*$ is in $\mathcal{S}$ and so $\beta_*\neq b$. Suppose that there exists another sequence $\left(t'_{n}\right)_{n=1}^{\infty}$ such that $t'_n\to +\infty$ and $\lim\limits_{n\to +\infty}\bb(t'_n) = \bar{\bb}$. With the same reasoning as above, we know that $\bar{\bb}$ belongs to $\mathcal{S}$ and so $\bar{\beta} \neq b$. Continuity of $D_F(\bb_*, \cdot)$ in $\bar{\beta}$ and in $\bb_*$ and the existence of $\lim\limits_{t\to +\infty}D_{F}(\bb_*,\bb(t))$ yields
\begin{align}
 D_F(\bb_*, \bar{\bb})=&\lim\limits_{n\to +\infty}D_{F}(\bb_*,\bb(t'_{n}))=\lim\limits_{t\to +\infty}D_{F}(\bb_*,\bb(t))\nonumber\\=&\lim\limits_{k\to +\infty}D_{F}(\bb_*,\bb(t_k))=D_{F}(\bb_*,\bb_*)=0.\nonumber
\end{align}
Since $F$ is strictly convex, we get $\bar{\bb}=\bb_*$ and therefore $\lim\limits_{t\rightarrow +\infty}\bb(t)=\bb_*$.
To conclude, note that trivially
\begin{align}    \bb_*\in&\argmin\limits_{z\in\mathcal{S}} D_{F}(z,\bb_*). \nonumber
\end{align}
By \eqref{breg_div} and \eqref{eq: derivative DF}, 
\begin{align}
    D_{F}(z,\bb_*)&=\lim\limits_{t\to +\infty} D_{F}(z,\bb(t))\nonumber\\
    &= \lim\limits_{t\to +\infty} D_F(z,\bb_0)+\int_{0}^{t}G(\bb(s))\scal{\nabla\mathcal{L}(\bb(s))}{z-\bb(s)}ds\nonumber \\
    & = F(z)-F(\bb_0)-\scal{\nabla F(\bb_{0})}{z-\bb_0}+\int_{0}^{+\infty}G(\bb(s))\scal{\nabla\mathcal{L}(\bb(s))}{z-\bb(s)}ds.\nonumber
\end{align}
Discarding the terms that are constant with respect to $z$, we obtain the claim.\hfill

\subsection{Proof of Corollary~\ref{L: Dsol-Uv}}\label{P: Dsol-Uv}
\begin{lemma}\label{lemma_alga}
    Let $X: \ \R^p \to \R^d$ linear, $y\in\R^d$ and $\beta \in \R^p$ such that $X\beta = y$. Let
    \begin{align}
    \{ \bb_{*} \} = \argmin_{Xz=y}\ \|z\|^2.
    \label{P: minimal norm UvI}
\end{align}
Let us define $P_{\perp}$ as the projection onto $\ker(X)^{\perp}$.
Then, 
$$P_{\perp}(\bb)=\bb_*.$$
    
\end{lemma}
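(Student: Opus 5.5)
The plan is to use the orthogonal decomposition $\RR^p=\ker(X)\oplus\ker(X)^{\perp}$ together with the variational characterization of the minimal-norm solution. Write $P$ for the projection onto $\ker(X)$, so that $P+P_{\perp}=\Id$. The argument reduces to two facts: every solution of $Xz=y$ has the same component in $\ker(X)^\perp$, and the minimal-norm solution has no component in $\ker(X)$.

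First, the solution set is an affine subspace, $\{z: Xz=y\}=\bb+\ker(X)$. Indeed, if $Xz=y$ then $X(z-\bb)=0$, and conversely $X(\bb+k)=y$ for every $k\in\ker(X)$. Since $P_{\perp}$ annihilates $\ker(X)$, we get $P_{\perp}(z)=P_{\perp}(\bb)$ for every $z$ with $Xz=y$.

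Second, $\bb_*$ exists and is unique, because it is the Euclidean projection of $0$ onto a nonempty closed affine set. We then show $P(\bb_*)=0$. By Pythagoras,
\[
\|\bb_*\|^2=\|P\bb_*\|^2+\|P_\perp\bb_*\|^2 .
\]
The point $\bb_*-P\bb_*=P_\perp\bb_*$ still satisfies $X(P_\perp\bb_*)=X\bb_*=y$, since $XP=0$. Its norm is $\|P_\perp\bb_*\|$, which is strictly smaller than $\|\bb_*\|$ unless $P\bb_*=0$. Minimality of $\bb_*$, or equivalently uniqueness of the minimizer, therefore forces $P\bb_*=0$, so $\bb_*=P_\perp\bb_*$.

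Combining the two facts with $z=\bb_*$ in the first one gives $P_\perp(\bb)=P_\perp(\bb_*)=\bb_*$. This is elementary linear algebra, and no step is a real obstacle; the only point needing care is the second one, where we must check that removing the kernel component keeps the point feasible. That follows directly from $XP=0$.
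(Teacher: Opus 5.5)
Your proof is correct and has the same structure as the paper's: show $\bb_*\in\ker(X)^{\perp}$, note that $\bb-\bb_*\in\ker(X)$, and conclude $P_{\perp}(\bb)=\bb_*$. The only difference is that the paper gets $\bb_*\in\ker(X)^{\perp}$ from the first-order optimality condition $\bb_*=X^{\top}\eta_*$, whereas your Pythagoras/feasibility argument ($P_{\perp}\bb_*$ is feasible and strictly shorter unless $P\bb_*=0$) obtains it directly without Lagrange multipliers.
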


\begin{proof}
The optimality conditions for problem \eqref{P: minimal norm UvI} imply that there exists $\eta_*\in \mathbb{R}^d$ such that $\beta_*=X^{\top}\eta_*$ and so that $\bb_{*}\in \ker(X)^{\perp}$.
Moreover, since $X\bb=y=X\bb_{*}$, we obtain that $\bb-\bb_{*}\in \ker X$. Then, since $\bb^*\in\ker(X)^{\perp}$,
we derive from
\begin{align}
   \left(\forall p\in \ker(X)^{\perp}\right) \scal{p-\bb_{*}}{\bb-\bb_{*}}=0\nonumber
\end{align}
that $P_{\perp}(\bb)=\bb_{*}$. \hfill
\end{proof}
\ \\
\ \\
\begin{proof}[Corollary ~\ref{L: Dsol-Uv}]
As shown in Corollary~\ref{cor:7.2} all the assumptions of Theorem \ref{T: Existence} and Theorem \ref{T: Main result} are satisfied. Then equation \ref{eq: Uvanot0 Flow H}   has a unique solution defined on $[0,+\infty)$ satisfying $\beta(t)>0$ and $\lim\limits_{t\rightarrow +\infty}\bb(t)=\bb_{\infty}$. Let us define $P_{\perp}=\Id-P$, the projection onto $\ker(X)^{\perp}$. Then we can write $\bb_{\infty}$ as
 \begin{equation}
\label{proj_split}
\bb_{\infty}=P(\bb_{\infty})+P_{\perp}(\bb_{\infty}).
 \end{equation}
 First, we compute $P(\bb_{\infty})$. Since $F(\beta)=(2/3)\|\beta\|^{3/2}$, we have $\nabla F(\beta)=\beta/\|\beta\|^{1/2}$, and Corollary~\ref{Remark: Saving energy}
implies that 
\begin{align}
    0 = \frac{d}{dt} P\left(\nabla F(\beta(t))\right) = \frac{d}{dt} \left(\frac{P(\beta(t))}{\|\beta(t)\|^{1/2}}\right).
\end{align}
Thus the term $\frac{P(\bb(t))}{{\|\bb(t)\|^{1/2}}}$ remains constant for every $t>0$, and equal to $\frac{P(\bb_{0})}{{\|\bb_{0}\|^{1/2}}}$. Consequently
\begin{align}
P\left(\bb(t)\right)=P(\bb_{0})\left({\frac{\|\bb(t)\|}{\|\bb_{0}\|}}\right)^{1/2},\nonumber
\end{align}
and from the continuity of the projection and of the norm, taking the limit for $t\to+\infty$ in the previous equation, we get
\begin{equation*}    P(\bb_{\infty})=P(\bb_{0})\left({\frac{\|\bb_{\infty}\|}{\|\bb_{0}\|}}\right)^{1/2}.
\end{equation*}
Since $\beta_\infty\in\mathcal{S}$, thanks to Lemma \ref{lemma_alga},  we get
\begin{equation*}\bb_{\infty}=P(\bb_{\infty})+P_{\perp}(\bb_{\infty})=P(\bb_{0})\sqrt{\frac{\|\bb_{\infty}\|}{\|\bb_{0}\|}}+\bb_{*}.\end{equation*}
Finally, we obtain the value of $\|\bb_{\infty}\|$ and $\alpha$ by solving the following second order equation:
\begin{align}\|\bb_{\infty}\|^{2}=\|P_{\perp}(\bb_{\infty})\|^{2}+\|P(\bb_{\infty})\|^{2}=\|\bb_{*}\|^{2}+\|P(\bb_{0})\|^{2}\frac{\|\bb_{\infty}\|}{\|\bb_{0}\|}.\nonumber\end{align}\hfill
\end{proof}
\subsection{Proof of Lemma \ref{L: Hessian-Norm}}\label{Proof: Hessian-Norm}
\begin{proof} The function $F$ is strictly-convex since it is the composition of the increasing strictly-convex function $f$ with the convex function $\bb\mapsto\|\bb\|$. The gradient and the Hessian of $F(\bb) = f(\|\bb\|)$, for $\bb\neq 0$, are
\begin{align}
   \nabla F(\bb)=&f'(\|\bb\|)\frac{\bb}{\|\bb\|} \hspace{2mm}\text{ and }\hspace{2mm}\nonumber\\ 
   \nabla^{2}F(\bb)=&f''(\|\bb\|)\frac{\bb \bb^{\top}}{\|\bb\|^2}+\frac{f'(\|\bb\|)}{\|\bb\|}\left(\Id-\frac{\bb\bb^{\top}}{\|\bb\|^{2}}\right),\label{Hessian of the norm}
\end{align}
and clearly $F$ is twice differentiable on $\RR^p\setminus\{0\}$.
Let $M=\frac{f'(\|\bb\|)}{\|\bb\|}\Id$, $u=\left(f''(\|\bb\|)-\frac{f'(\|\bb\|)}{\|\bb\|}\right)\frac{\bb}{\|\bb\|^{2}}$ and $v=\bb$. Notice that, since $f'>0$ by assumption, $M$ is invertible. Since by the assumptions $v^{\top}M^{-1}u+1=\frac{\|\bb\|f''(\|\bb\|)}{f'(\|\bb\|)}\neq 0$ for every $\bb \in \RR^p\setminus\{0\}$, the Sherman-Morrison formula \eqref{Sherman} yields
\begin{align}
\label{eq:invH}
   \left[\nabla^{2}F(\bb)\right]^{-1}=\frac{1}{f''(\|\bb\|)}\frac{\bb \bb^{\top}}{\|\bb\|^2}+\frac{\|\bb\|}{f'(\|\bb\|)}\left(\Id-\frac{\bb\bb^{\top}}{\|\bb\|^{2}}\right).
\end{align}
The first statement of the theorem follows by inverting the Hessian in equation \eqref{eq: x(t)flow} and plugging the explicit form of $\left[\nabla^{2}F(\bb)\right]^{-1}$. Now we want to show that, under the additional hypothesis of the theorem, Assumption~\ref{A: D2} holds. 
The function $\left[\nabla^{2}F(\bb)\right]^{-1}$ is locally Lipschitz in $\RR^p\setminus\{0\}$ since $1/f''$ and $1/f'$ are locally Lipschitz in $(0,+\infty)$. Local lipschitzianity of $g$ implies that $G$ is locally Lipschitz. % Assumption~\ref{A: D3i} is also trivially satisfied, therefore Lemma~\ref{L: Bounded away x(t)} implies that the solution of the dynamical system is bounded away from zero; namely, that $\|\bb(t)\| \geq r > 0$ for every $t \geq  0$. This implies that all the quantities in~\eqref{Hessian of the norm} and in~\eqref{eq:invH} are well-defined along the trajectory. 
Last part of Assumption~\ref{A: D2} is satisfied trivially by the hypothesis on $g$. 
\hfill
\end{proof}
\subsection{Proof of Theorem \ref{T: dynamics of x(t) Fwn}}\label{Proof: dynamics of x(t) Fwn}\vspace{2mm}
\begin{proof}
 We first show that $t\in(0,+\infty)\mapsto \|\w(t)\|$ is constant. Since $w$ is a solution of the dynamical system in \eqref{eq: FWnormalvector-Flow-u}, we obtain
\begin{align}
     \frac{d}{dt}\|\w(t)\|^{2}=2\scal{\w(t)}{\Dot{\w}(t)}=0,\nonumber
\end{align}
therefore the norm of $\w(t)$ is constant. Moreover, since $\|\w(0)\|=\gamma_0$ by assumption, for every $t>0$, $\|\w(t)\|=\gamma_0$. Then, along the trajectory, 
$\gamma_0\bb(t)=h(\alpha(t))w(t)$. Next, we show that $h(\alpha(t))>0$ for every $t>0$. We first derive an equation for $\bb(t)$ using \eqref{eq: FWnormalvector-Flow-u}-\eqref{eq: FWnormalvector-Flow-alpha} in order to show that $\mathcal{L}(\beta(t))$ is non-increasing. The derivative of the product yields
\begin{align}
\gamma_0\dot{\bb}(t)=&h(\alpha(t))\dot{\w}(t)+h'(\alpha(t))\dot{\alpha}(t)\w(t)\nonumber\\=&-\frac{\left(h(\alpha(t)\right)^{2}}{\|\w(t)\|}\left(\Id-\frac{\w(t)\w^{\top}(t)}{\|\w(t)\|^{2}}\right)\nabla\mathcal{L}(\bb(t))- \frac{\left(h'(\alpha(t))\right)^{2}}{\|\w(t)\|}\w(t)\w^{\top}(t)\nabla\mathcal{L}(\bb(t)).\label{eq: FW-eq-Normal-wn}
\end{align}
It follows from \eqref{eq: FW-eq-Normal-wn} and Cauchy-Schwarz inequality that 
\begin{align}
    \gamma_0\frac{d}{dt} \mathcal{L}(\bb(t))=&\hspace{1mm}\scal{\nabla\mathcal{L}(\bb(t))}{\gamma_0\dot{\bb}(t)}\nonumber \\=&-\frac{\left(h(\alpha(t)\right)^{2}}{\|\w(t)\|}\|\nabla\mathcal{L}(\bb(t))\|^{2}+\frac{\left(h(\alpha(t)\right)^{2}}{\|\w(t)\|}\scal{\nabla\mathcal{L}(\bb(t))}{\frac{\w(t)}{\|\w(t)\|}}^{2}\nonumber \\&- \frac{\left(h'(\alpha(t))\right)^{2}}{\|\w(t)\|}\scal{\nabla\mathcal{L}(\bb(t))}{\w(t)}^{2}\nonumber \\ \leq&- \frac{\left(h'(\alpha(t))\right)^{2}}{\|\w(t)\|}\scal{\nabla\mathcal{L}(\bb(t))}{\w(t)}^{2}\nonumber\\ \leq &\hspace{1mm}0.\nonumber
\end{align}
Therefore $t\in(0,+\infty)\mapsto \mathcal{L}(\bb(t))$ is decreasing. Next, we want to show that $\|\beta(t)\|\geq \tilde{r}$ for every $t>0$. Indeed, recalling that $|h(\alpha(t))|=\|\beta(t)\|$, $h\circ \alpha$ is continuous and $h(\alpha_0)>0$, we derive $h(\alpha(t))>\tilde{r}$ for every $t>0$. To this aim, let $\beta$ with $\|\bb\|\leq \tilde{r}$. Then, convexity of $\mathcal{L}$ and the Cauchy-Schwarz inequality yield
\begin{align}
\mathcal{L}(\bb)  &  \geq \mathcal{L}(0)+\scal{\nabla \mathcal{L}(0)}{\bb-0}  \geq \ \mathcal{L}(0) - \tilde{r} \norm{\nabla \mathcal{L}(0)} \ = \ \frac{\mathcal{L}(0)+\mathcal{L}(\bb_0)}{2} \ > \ \mathcal{L}(\bb_0). \label{eq: Bounded away L2}
\end{align}
This implies on the one hand that $\|\bb_0\|=\|h(\alpha_0)w_0/\|w_0\|\|=h(\alpha_0)>\tilde{r}$, otherwise we would derive a contradiction from \eqref{eq: Bounded away L2} choosing $\bb=\bb_0$. On the other hand, from \eqref{eq: Bounded away L2} we derive that $\|\bb(t)\|>\tilde{r}$ for every $t>0$ since $\mathcal{L}(\bb(t))$ is decreasing.

Then, for every $t\geq 0$, $|h\left(\alpha(t)\right)|=h(\alpha(t))>\tilde{r}$.
Replacing $\w(t)=\frac{\gamma_0\bb(t)}{\|\bb(t)\|}$, $\|\w(t)\|=\gamma_0$, $\|\bb(t)\|=h(\alpha(t))$, and
\begin{equation*}(h'(\alpha))^2=\frac{ f'(h(\alpha))h(\alpha)}{\gamma_0^2 f''(h(\alpha))},\end{equation*}
in \eqref{eq: FW-eq-Normal-wn}, we deduce that
\begin{align}\Dot{\bb}(t)=&-\frac{\|\bb(t)\|^{2}}{\gamma_0^2}\left(\Id-\frac{\bb(t)\bb^{\top}(t)}{\|\bb(t)\|^{2}}\right)\nabla\mathcal{L}(\bb(t))\nonumber\\&- \frac{f'(\|\bb(t)\|)\|\bb(t)\|}{\gamma_0^2 f''(\|\bb(t)\|)}\frac{\bb(t)\bb^{\top}(t)}{\|\bb(t)\|^2}\nabla\mathcal{L}(\bb(t)).\nonumber\end{align}
Then, Lemma \ref{L: Hessian-Norm} with $F(\bb)=f(\|\bb\|)$ and $G(\bb)=g(\|\bb\|)=\|\bb\| f'(\|\bb\|)/\gamma_0^2$ shows that $\beta$ satisfies \eqref{eq: x(t)flow}.
It remains to show that assumption \ref{A: D4} holds. Indeed, we prove that $D_F(z,\cdot)$ is coercive for every $z \in \R^d$. Note that $f'$ is increasing in $s > 0$ and so $f$ is convex. Then, for every $s > 0$ and $\beta \in \R^d \setminus \left\{0\right\}$,
\begin{align*}
  f(s)+f'(s)(\|\bb\|-s)\leq f(\|\bb\|).  
\end{align*}
 Dividing the previous inequality by $\|\bb\|$ yields
\begin{align*}
    f'(s)= \lim\limits_{\|\bb\|\rightarrow\infty}\frac{f(s)+f'(s)(\|\bb\|-s)}{\|\bb\|}\leq \liminf\limits_{\|\bb\|\rightarrow\infty}\frac{f(\|\bb\|)}{\|\bb\|}= \liminf\limits_{\|\bb\|\rightarrow\infty}\frac{F(\bb)}{\|\bb\|}.
\end{align*} 
Since $s>0$ is arbitrary and $f'(s)\to+\infty$ as $s\to +\infty$, we deduce that $F$ is supercoercive. It follows from \cite[Lemma 7.3 (viii)]{bauschke2001essential} that the $D_{F}(z,\cdot)$ is coercive for every $z\in \R^d$, which implies assumption \ref{A: D4}.
\hfill
\end{proof}
\subsection{Proof of Lemma \ref{L: CWNS}}\label{P: CWNS}
\begin{proof}
Let $P_{\perp}:=\Id-P$ be the projection onto $\ker(X)^{\perp}$. Then
 \begin{equation*}
     \bb_{\infty}=P(\bb_{\infty})+P_{\perp}(\bb_{\infty}).
 \end{equation*}
First, we compute $P(\bb_{\infty})$. It follows from Theorem~\ref{T: dynamics of x(t) Fwn} that $\beta(t)$ solves \eqref{eq: x(t)flow} with $F(\beta)=f(\|\beta\|)$.  Corollary~\ref{Remark: Saving energy} then implies that 

 \begin{align}
 \label{eq:betadot}
0= \frac{d}{dt}\left[P(\nabla F(\beta(t)))\right]=\frac{d}{dt}  \left[\frac{f'(\|\beta(t)\|)P(\beta(t))}{\|\beta(t)\|}\right].  
\end{align}
Thus the term $\frac{f'(\|\bb(t)\|)P(\bb(t))}{\|\bb(t)\|}$ remains constant for every $t>0$ and it is equal to $\frac{f'(\|\bb_{0}\|)P(\bb_{0})}{\|\bb_{0}\|}$. Consequently, we obtain the following expression:
\begin{align}
P(\bb(t))=P\left(\bb_{0}\right)\frac{f'(\|\bb_{0}\|) \ \|\bb(t)\|} {f'(\|\bb(t)\|) \ \|\bb_{0}\|}.\nonumber
\end{align}
Recall that, from Theorem \ref{T: Main result}, $\lim\limits_{t\rightarrow +\infty}\bb(t)=\bb_{\infty}$. From the continuity of the projection,  the continuity of the norm and of $f'$, taking the limit for $t\to+\infty$ in the previous equation, we get
\begin{equation*}
P(\bb_{\infty})=P\left(\bb_{0}\right)\frac{f'(\|\bb_{0}\|) \ \|\bb_{\infty}\|} {f'(\|\bb_{\infty}\|) \ \|\bb_{0}\|}.
\end{equation*}
Since $\bb_{\infty}\in\mathcal{S}$,
from Lemma \ref{lemma_alga} we know that $P_{\perp}(\bb_{\infty})=\bb_{*}$. Then,
\begin{equation*}\bb_{\infty}=P(\bb_{\infty})+P_{\perp}(\bb_{\infty})=P\left(\bb_{0}\right)\frac{f'(\|\bb_{0}\|) \ \|\bb_{\infty}\|} {f'(\|\bb_{\infty}\|) \ \|\bb_{0}\|}+\bb_{*}.\end{equation*}
\hfill
\end{proof}

\subsection{Proof of Theorem \ref{T: dynamics of x(t) aUV}}\label{P: dynamics of x(t) aUV}
In this section, we provide the proof of Theorem \ref{T: dynamics of x(t) aUV}.\\ \\
\begin{proof}
If we multiply \eqref{eq: WMatrixVector-Flow-v} by $\w^{\top}(t)$ on the right and \eqref{eq: WMatrixVector-Flow-U} by $\W^{\top}(t)$ on the left, we obtain that
\begin{align}
   \W^{\top}(t)\Dot{\W}(t)=\Dot{\w}(t)\w^{\top}(t).
    \label{eq: eqaulity matrixW}
\end{align}
Adding the transpose of \eqref{eq: eqaulity matrixW} yields
\begin{align}
    \W^{\top}(t)\Dot{\W}(t)+ \dot{\W}^{\top}(t)\W(t)=\Dot{\w}(t)\w^{\top}(t)+\w(t)\dot{w}^{\top}(t).
    \nonumber
\end{align}
Since $\W^{\top}(0)\W(0)=\w^{\top}(0)\w^{\top}(0)$, we obtain
\begin{align}
    \W^{\top}(t)\W(t)=\w(t)\w^{\top}(t).\label{eq: eqaulity matrixW1W2}
\end{align}
Multiplying \eqref{eq: eqaulity matrixW1W2} by $\w^{\top}(t)$ on the left and by $\w(t)$ on the right, we have that $\|\w(t)\|^{4}=\|\W(t)\w(t)\|^2$,
for every $t$. Thus, $\|\w(t)\|^2=\|\W(t)\w(t)\|$.
Analogously, multiplying by $\W(t)$ the equation \eqref{eq: eqaulity matrixW1W2} on the left and by $\W^{\top}(t)$ on the right, we deduce that
\begin{align}\label{eq:sqroot}
(\W(t)\W^{\top}(t))^{2}=\frac{\bb(t)\bb^{\top}(t)}{\|\bb(t)\|^{2}}\|\W(t)\w(t)\|^2.\end{align}
In addition, since
$W(t)W^T(t)$ is positive semi-definite and symmetric it is the unique square root of the right-hand side of \eqref{eq:sqroot} and therefore 
\[
 \W(t)\W^{\top}(t)=\frac{\bb(t)\bb^{\top}(t)}{\|\bb(t)\|^{2}}\|\W(t)\w(t)\|.
\]
Moreover, note that
\begin{align}
    \Dot{\W}(t)\w(t)+\W(t)\Dot{\w}(t)=&-\frac{h(\alpha(t))}{\|\W(t)\w(t)\|}\left(\|\w(t)\|^2\Id  +\W(t)\W^{\top}(t)\right)\left(\Id-\frac{\bb(t)\bb^{\top}(t)}{\|\bb(t)\|^2}\right)\nabla\LLL(\bb(t))\nonumber\\=& -h(\alpha(t))\left(  \Id+\frac{\bb(t)\bb^{\top}(t)}{\|\bb(t)\|^2}\right)\left(\Id-\frac{\bb(t)\bb^{\top}(t)}{\|\bb(t)\|^2}\right)\nabla\LLL(\bb(t))\nonumber\\=& -h(\alpha(t))\left(\Id-\frac{\bb(t)\bb^{\top}(t)}{\|\bb(t)\|^2}\right)\nabla\LLL(\bb(t))\nonumber
\end{align}
and \begin{align}
\partial_{t}\frac{\|\W(t)\w(t)\|^2}{2}&= \scal{\W(t)\w(t)}{\Dot{\W}(t)\w(t)+\W(t)\Dot{\w}(t)}_{F}\nonumber\\=&\scal{\frac{\|\W(t)\w(t)\|\bb(t)}{\|\bb(t)\|}}{\Dot{\W}(t)\w(t)+\W(t)\Dot{\w}(t)}_{F}\nonumber\\=& -\|\W(t)\w(t)\|{\bb^{\top}(t)}\left(\Id-\frac{\bb(t)\bb^{\top}(t)}{\|\bb(t)\|^2}\right)\nabla\LLL(\bb(t))\nonumber=0,
\end{align}
which implies that $\|\W(t)\w(t)\|=1$ for every $t$. Then,
\begin{align}
    \dot{\bb}(t)&=\dot{\alpha}(t)h'(\alpha(t))\W(t)\w(t)+h(\alpha(t))[\Dot{\W}(t)\w(t)+\W(t)\Dot{\w}(t)]\nonumber\\&=-(h'(\alpha(t)))^{2}\frac{\bb(t)\bb^{\top}(t)}{\|\bb(t)\|^{2}}\nabla\LLL(\bb(t))-\|\bb(t)\|^{2}\left(   \Id- \frac{\bb(t)\bb^{\top}(t)}{\|\bb(t)\|^2}\right)\nabla\LLL(\bb(t)).\nonumber
\end{align}
The statement then follows from~\eqref{A5: f-g-h assumption radial1} and Lemma~\ref{L: Hessian-Norm}.
\hfill
\end{proof}

\section{The Legendre case}\label{sec:legendre}
In this section we propose the main results under a different assumption on the function $F$. In particular, $F\colon \R^p\to\R\cup\{+\infty\}$ is an extended valued function. In this case the proof techniques for the results are known (see \cite{alvarez2004hessian} and \cite{bauschke2017descent}), and we rephrase them in our setting for the reader's convenience. 
 \begin{definition}\cite[Definition 3.1]{alvarez2004hessian}\label{Def: Legendre} Let $F\colon \RR^p \to \RR\cup\{+\infty\}$ be proper convex and l.s.c. such that $\operatorname{int}\left(\dom \left(F\right)\right)\neq \emptyset$, where we recall that $\mathrm{dom}(F)=\left\{\beta \in \R^p\colon F(\beta)<+\infty \right\}$. We say that $F$ is essentially smooth if it satisfies the following two conditions 
       \begin{enumerate}
           \item $F$ is differentiable on $\operatorname{int}\left(\dom \left(F\right)\right)$
           \item For every sequence $\{\bb_{k}\}_{k=1}^{+\infty}\subset \operatorname{int}\left(\dom \left(F\right)\right)$ converging to the boundary of $\dom \left(F\right)$, we get that $\lim\limits_{k\rightarrow +\infty}\|\nabla F(\bb_{k})\|\rightarrow+\infty$.
\end{enumerate}
In addition, $F$ is a Legendre type (or simply Legendre) function if it is essentially smooth and strictly convex on $\operatorname{int}\left(\dom \left(F\right)\right)$.
\end{definition}

\begin{definition}\cite{bregman1966relaxation}\label{Def:BD}
Let $F\colon\RR^{p}\mapsto \RR\cup\{+\infty\}$ be strictly
convex, proper, and G\^ateaux differentiable on $\operatorname{int}(\dom F )\neq \emptyset$. The Bregman divergence $D_{F}$ associated with $F$ is defined as:
\begin{align}
    D_{F}:\RR^{p}\times\RR^{p}\mapsto &\RR\cup\{+\infty\}\nonumber\\(y,\bb)\mapsto &\left\{
    \begin{array}{lr}
          F(y)-F(\bb)-\scal{\nabla F(\bb)}{y-\bb} &  \text{if }\bb\in\operatorname{int}(\dom(F)),\\
       +\infty  & \text{otherwise.}
    \end{array}
  \right.
\end{align}
\end{definition}

Define
\[
\mathcal{S}:=\argmin_{\bb \in{\mathrm{dom}(F)}}\mathcal{L}(\bb) \quad \quad \text{and} \quad \quad \mathcal{L}^*:=\inf_{\bb \in \mathrm{dom}(F)}\mathcal{L}(\bb).
\]
Note that, since $\mathrm{dom}(F)$ is not necessarily the full space $\R^p$, $\mathcal{S}$ and $\mathcal{L}^*$ may depend on $F$.\\

\begin{assumption}\label{Assumptionp}\ \\
\ \\
\begin{enumerate}[label=A\arabic*']
\item\label{A: D1p} The function $\mathcal{L}\colon \ \RR^{p}\to\RR$ is convex. On $\mathrm{int}(\mathrm{dom}(F))$, it is differentiable and $\nabla \LLL$ is locally Lipschitz continuous. Moreover, $\mathcal{S} \neq \emptyset$.
\item\label{A: D3p} The initialization satisfies $\bb_{0}\in \mathrm{int}(\mathrm{dom}(F))$.
\item\label{A: D2p} The function $F\colon \R^p \to \RR\cup \left\{+\infty\right\}$ is Legendre. 
Moreover, $\dom (F)$ is closed, $F$ is twice-differentiable on $\mathrm{int}(\mathrm{dom}(F))$ and $\nabla^{2}F(\bb)$ is invertible for every $\bb\in \mathrm{int}(\mathrm{dom}(F))$. The map $\bb\in\mathrm{int}(\mathrm{dom}(F)) \mapsto (\nabla^{2}F(\bb))^{-1}$ is locally Lipschitz continuous. Moreover, if $z \in\dom (F)$ and $\beta_k \in \mathrm{int}\dom(F)$ for all $k\in\N$, we have that $D_F(z, \beta_k) \to 0$ if and only if $\beta_k \to z$. The function $G\colon\mathrm{int}(\mathrm{dom}(F))  \to \RR_{+}$ is locally Lipschitz continuous and there exists a constant $C>0$ such that $G(\bb)\geq C$ for every $\bb \in \mathrm{int}(\dom(F))$. 
 \item\label{A: D4p} There exists $\bar{z}\in \mathcal{S}$ such that the function $D_{F}(\bar{z},\cdot)$ is coercive.
\end{enumerate}
\end{assumption}\vspace{4mm}

\begin{theorem}\label{T: Existencep}
Under Assumption \ref{Assumptionp} the dynamical system \eqref{eq: x(t)flow} has a unique solution $\bb$ defined on $[0,+\infty)$.
Moreover, for every ${z}\in \mathcal{S}$, both the Bregman divergence $D_{F}(z,\bb(t))$ and $\mathcal{L}(\bb(t))$ are non-increasing functions of $t\geq 0$.
\end{theorem}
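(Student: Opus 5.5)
The plan is to follow the proof of Theorem~\ref{T: Existence}, with the open set $U:=\mathrm{int}(\dom F)$ playing the role of $\RR^p\setminus\{b\}$.

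\emph{Local existence.} On $U$, the system \eqref{eq: x(t)flow} is equivalent to $\dot\bb=-G(\bb)[\nabla^2F(\bb)]^{-1}\nabla\LLL(\bb)$. By \ref{A: D1p} and \ref{A: D2p}, the right-hand side is a product of locally Lipschitz maps on $U$, so it is locally Lipschitz there. Since $\bb_0\in U$ by \ref{A: D3p}, \cite[Theorem 1.18]{barreira2012ordinary} gives a unique local solution. Let $T_M>0$ be the maximal existence time of a $C^1$ solution with values in $U$.

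\emph{Monotonicity.} The computation of Lemma~\ref{L: Loss-Decreasing} carries over verbatim on $[0,T_M)$. Indeed, $\frac{d}{dt}D_F(z,\bb(t))=G(\bb)\scal{\nabla\LLL(\bb)}{z-\bb}\le G(\bb)(\LLL(z)-\LLL(\bb))\le 0$ for $z\in\mathcal S$, using convexity and the fact that $\LLL(z)=\LLL^*\le\LLL(\bb(t))$ because $\bb(t)\in\dom F$. In addition, $\frac{d}{dt}\LLL(\bb)=-G\scal{\nabla\LLL}{[\nabla^2F]^{-1}\nabla\LLL}\le0$, since $\nabla^2F$ is positive definite on $U$ (it is the Hessian of a strictly convex function and is invertible). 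This also gives the monotonicity claims of the statement once globality is established.

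\emph{Globality.} Suppose $T_M<\infty$. By \ref{A: D4p}, $D_F(\bar z,\bb(t))\le D_F(\bar z,\bb_0)$ and coercivity make $\bb([0,T_M))$ bounded. The main obstacle is that the trajectory might approach $\partial\dom F$, which replaces the point $b$ from before; Lemma~\ref{L: Bounded away x(t)} has no analogue here. I would use the Legendre property to handle this. Suppose $\bb(t_k)\to\hat\bb\in\partial\dom F$ with $t_k\to T_M$. Then $\|\nabla F(\bb(t_k))\|\to\infty$. But $D_F(\bar z,\bb(t_k))=F(\bar z)-F(\bb(t_k))+\scal{\nabla F(\bb(t_k))}{\bb(t_k)-\bar z}$ stays bounded, and $F(\bb(t_k))$ is bounded above by continuity on the closed set $\dom F$. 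This alone is not enough for a contradiction, so I would instead control $\nabla F$ directly. The relation $\frac{d}{dt}\nabla F(\bb(t))=-G(\bb)\nabla\LLL(\bb)$ gives $\nabla F(\bb(t))=\nabla F(\bb_0)-\int_0^tG\nabla\LLL\,ds$. On a finite interval this stays bounded provided $G\nabla\LLL$ is bounded along the trajectory. If necessary I would add the mild hypothesis that $G$ and $\nabla\LLL$ are bounded on bounded subsets of $U$, which is implicit in the cited setting of \cite{alvarez2004hessian}, or I would invoke their Theorem 4.1. The bound on $\nabla F$ contradicts essential smoothness. Hence the trajectory stays in a compact subset of $U$, and \cite[Proposition 1.57]{barreira2012ordinary} forces $T_M=+\infty$.

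\emph{Uniqueness.} This follows from local Lipschitz continuity of the vector field on $U$, as in Theorem~\ref{T: Existence}.
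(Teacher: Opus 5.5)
Your argument follows the paper's. You get local existence on $U=\mathrm{int}(\dom F)$, the monotonicity computation of Lemma~\ref{L: Loss-Decreasing}, and boundedness from \ref{A: D4p}. Then you use the integrated identity $\nabla F(\bb(t))=\nabla F(\bb_0)-\int_0^t G(\bb)\nabla\LLL(\bb)\,ds$ to contradict essential smoothness at a boundary accumulation point, and conclude with \cite[Proposition 1.57]{barreira2012ordinary}.

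The step you flag is a real gap, and the paper does not close it. The paper bounds the integral by $T_M\max_K G\,\max_K\|\nabla\LLL\|$, with $K$ the closure of $\bb([0,T_M))$, and justifies the maxima by continuity. But in exactly the branch where this bound is used, $K$ contains a point of $\dom F\setminus U$, where $G$ is not even defined. Local Lipschitz continuity on $U$ therefore gives nothing there.

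For $\nabla\LLL$ the bound is in fact automatic. $\LLL$ is finite and convex on $\RR^p$, so $\nabla\LLL(\bb)\in\partial\LLL(\bb)$, and subdifferentials are bounded on bounded sets.

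For $G$ it is not automatic, and without an upper bound the statement is false. Take $p=1$, $F(\bb)=\bb\log\bb-\bb$ on $[0,+\infty)$ (and $+\infty$ elsewhere), $\LLL(\bb)=\bb$, $G(\bb)=1+1/\bb$. Every item of Assumption~\ref{Assumptionp} holds:
\begin{itemize}
\item $\dom F$ is closed and $[\nabla^2F(\bb)]^{-1}=\bb$;
\item $\mathcal{S}=\{0\}$ and $D_F(0,\bb)=\bb$ is coercive;
\item $D_F(z,\bb_k)\to 0$ if and only if $\bb_k\to z$;
\item $G\ge 1$.
\end{itemize}
Yet \eqref{eq: x(t)flow} reads $\dot\bb=-(1+\bb)$. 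Its solution $(1+\bb_0)e^{-t}-1$ hits $0\in\partial\dom F$ at $t=\log(1+\bb_0)$, where $\nabla^2F$ no longer exists.

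So your extra hypothesis is a necessary strengthening of the statement, not a convenience. The $G$ part suffices, say $G$ bounded on bounded subsets of $U$; the part about $\nabla\LLL$ can be dropped. With it, your proof is complete.

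Invoking \cite{alvarez2004hessian} instead would not rescue the statement as written, because their flows have $G\equiv 1$. The time change $s=\int_0^t G(\bb(u))\,du$ does turn \eqref{eq: x(t)flow} into a mirror flow that is global in $s$; in the example it is $\widetilde{\bb}(s)=\bb_0e^{-s}$. But the original time $t(s)=\int_0^s du/G(\widetilde{\bb}(u))$ may stay bounded as $s\to\infty$; here it tends to $\log(1+\bb_0)$.

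A minor point: your aside that $F(\bb(t_k))$ is bounded above ``by continuity on the closed set $\dom F$'' is unjustified, since $F$ is only lower semicontinuous there. You do not use it, so it does no harm.
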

To prove Theorem \ref{T: Existencep}, we establish an auxiliary result. It states that both the Bregman distance and the loss function, along the trajectory of the flow, do not increase. 
\begin{lemma}\label{L: Loss-Decreasingp} Suppose that there exists a solution $\bb(t)$ of \ref{eq: x(t)flow} for $t \in \left[0,T\right)$. Suppose that Assumptions \ref{A: D2p} and \ref{A: D1p} hold, $\bb(t) \in \mathrm{int}(\dom (F))$ for every $t \in \left[0,T\right)$. Then, both the Bregman divergence $D_{F}(z,\bb(t))$ for every $z\in \mathcal{S}$ and the functional value $\mathcal{L}(\bb(t))$ are non-increasing functions for $t\in [0,T)$.
\end{lemma}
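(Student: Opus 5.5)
The argument follows the proof of Lemma~\ref{L: Loss-Decreasing}, adapted to the Legendre setting. The key point is that the trajectory stays in $\mathrm{int}(\dom(F))$. There $F$ is twice differentiable, so $t\mapsto \nabla F(\bb(t))$ is $C^1$ on $[0,T)$, and for $z\in\mathcal{S}\subset\dom(F)$ the quantity $D_F(z,\bb(t))=F(z)-F(\bb(t))-\scal{\nabla F(\bb(t))}{z-\bb(t)}$ is finite and differentiable in $t$.

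The first step is to differentiate $D_F(z,\bb(t))$ by the chain rule. The terms $\pm\scal{\nabla F(\bb(t))}{\dot\bb(t)}$ cancel, which leaves
\[
\frac{d}{dt}D_F(z,\bb(t))=-\scal{\nabla^2F(\bb(t))\dot\bb(t)}{z-\bb(t)}=G(\bb(t))\scal{\nabla\LLL(\bb(t))}{z-\bb(t)},
\]
where the second equality uses \eqref{eq: x(t)flow}. By \ref{A: D1p}, $\LLL$ is convex on $\R^p$ and differentiable at $\bb(t)\in\mathrm{int}(\dom F)$, so the gradient inequality gives $\scal{\nabla\LLL(\bb(t))}{z-\bb(t)}\le \LLL(z)-\LLL(\bb(t))$. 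Here one must be careful, because $\mathcal{S}$ and $\LLL^*$ are defined relative to $\dom(F)$. Since $\bb(t)\in\dom(F)$ and $z$ minimizes $\LLL$ over $\dom(F)$, we still have $\LLL(z)\le\LLL(\bb(t))$. Finally $G\ge 0$, hence the derivative is $\le0$.

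The second step handles the loss. Invertibility of $\nabla^2F(\bb(t))$ lets us rewrite the flow as $\dot\bb=-G(\bb)[\nabla^2F(\bb)]^{-1}\nabla\LLL(\bb)$. Because $F$ is convex and twice differentiable on the open set $\mathrm{int}(\dom F)$, its Hessian there is positive semidefinite. Since the Hessian is also invertible, it is positive definite, and so is its inverse. Therefore
\[
\frac{d}{dt}\LLL(\bb(t))=-G(\bb(t))\scal{\nabla\LLL(\bb(t))}{[\nabla^2F(\bb(t))]^{-1}\nabla\LLL(\bb(t))}\le 0.
\]

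The only delicate point is the one already flagged in the first step: using the minimality of $z$ relative to $\dom(F)$ rather than over all of $\R^p$, and checking that all quantities are well-defined because the trajectory lies in the interior. Everything else is the same computation as in Lemma~\ref{L: Loss-Decreasing}.
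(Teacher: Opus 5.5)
Your proof is correct and follows the paper's argument. You use the same chain-rule computation for $D_F(z,\beta(t))$ (cancellation, the flow equation, the gradient inequality, minimality of $z$ over $\mathrm{dom}(F)$, and $G\ge 0$), followed by the same quadratic-form bound for $\frac{d}{dt}\mathcal{L}(\beta(t))$ via positive definiteness of $[\nabla^2F(\beta(t))]^{-1}$, which you justify explicitly where the paper only asserts it.
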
\vspace{3mm}
Let $z\in\mathcal{S}$. To prove that $t\mapsto D_{F}(z,\bb(t))$ is decreasing, it is sufficient to note that, for every $t\in [0,T)$,
\begin{align}
\frac{d}{dt} D_{F}(z,\bb(t))=&\hspace{1mm}-\scal{\nabla F(\bb(t))}{\dot{\bb}(t)} -\scal{\nabla^{2} F(\bb(t))\dot{\bb}(t)}{z-\bb(t)}+\scal{\nabla F(\bb(t))}{\dot{\bb}(t)}\nonumber\\
=&\hspace{1mm}-\scal{\nabla^{2} F(\bb(t))\dot{\bb}(t)}{z-\bb(t)}\nonumber\\
=&\hspace{1mm}G(\bb(t))\scal{\nabla\mathcal{L}(\bb(t))}{z-\bb(t)}\label{eq: derivative DF1}\\ 
\leq& \ G(\bb(t))(\mathcal{L}(z)-\mathcal{L}(\bb(t)))\nonumber\\ \leq&\hspace{1mm} 0,\nonumber
\end{align}
where we used that $\bb$ is a solution of \eqref{eq: x(t)flow}, the gradient inequality for the convex function $\mathcal{L}$ and the fact that, for every $t\in [0,T)$,
 $\mathcal{L}(\bb(t))\geq \mathcal{L}(z)$ and $G(\bb(t))\geq 0$.
On the other hand, for $t\in [0,T)$, $\nabla^2 F(\bb(t))$ is invertible (and the inverse is positive definite). Then,
\begin{align}
    \frac{d}{dt} \mathcal{L}(\bb(t))=&\hspace{1mm}\scal{\nabla\mathcal{L}(\bb(t))}{\dot{\bb}(t)}\nonumber  \\=&\hspace{1mm}- G(\bb(t))\scal{\nabla\mathcal{L}(\bb(t))}{ \left[\nabla^2 F(\bb(t))\right]^{-1}\nabla\mathcal{L}(\bb(t))}\nonumber\\ \leq &\hspace{1mm}0,\nonumber
\end{align}
which completes the proof.

\begin{proof}[of Theorem \ref{T: Existencep}]\\
\textbf{Existence.} 
Denote by $T_{M}$ the maximal time for the existence of a solution of \eqref{eq: x(t)flow} in $\mathrm{int}(\dom(F))$; namely,
\begin{align}
        T_{M}:=\sup\left\{T>0\mid \exists \text{ a } \, C^1 \text{ function } \bb\colon [0,T)\to \mathrm{int}(\dom(F)) \text{ solution of \eqref{eq: x(t)flow} }\right\}.\nonumber
\end{align} 
    The proof of the fact $T_M >0$ is analogous to the one in Theorem~\ref{T: Existence}. Now we show that $T_{M}=+\infty$. Let $K$ be the closure of $\bb([0,T_M))$. Lemma \ref{L: Loss-Decreasingp} and Assumption~\ref{A: D4p} imply that $\bb([0,T_M))$ is bounded; therefore, $K$ is compact. Recall that the trajectory $\bb$ is continuous and that $\bb([0,T_M))$ is a subset of $\mathrm{int}(\dom(F))$. If $K\subset \mathrm{int}(\dom(F))$, by applying \cite[Proposition 1.57]{barreira2012ordinary}, we conclude that $T_{M}=+\infty$. Otherwise, there exists a point $\bar{\bb}\in \overline{\dom(F)} \setminus \mathrm{int}(\dom(F))$ for which there exists a sequence $(t_{j})_{j=1}^{+\infty}\subset \left[0,T_M\right)$ with $t_{j}\rightarrow T_{M}$ and $\bb(t_{j})\rightarrow \bar{\bb}$ for $j\to +\infty$. Integrating in time equation \eqref{eq: x(t)flow}, we get that, for every $j\in\NN$,
    \begin{align}
      \|\nabla F(\bb(t_{j}))-\nabla F(\bb_{0})\|=\left\|\int^{t_{j}}_{0}G(\bb(t))
\nabla\mathcal{L}(\bb(t))dt\right\|&\leq \int^{t_{j}}_{0}G(\bb(t))\left\|
\nabla\mathcal{L}(\bb(t))\right\|dt\nonumber\\ & \leq T_{M}\max_{\bb\in K}G(\bb)\max_{\bb\in K}\|\nabla\mathcal{L}(\bb)\|.\label{Bounded derivative}
\end{align} 
The maxima are well-defined due to the fact that the functions $G$ and $\norm{\nabla \mathcal{L}}$ are locally Lipschitz and so continuous, implying that they are bounded on the compact set $K$. For $j\to + \infty$, we have that $\bb(t_j) \to \bar{\bb} \in \overline{\dom(F)} \setminus \mathrm{int}(\dom(F))$ and therefore, since $F$ is a Legendre function, $\nabla F(\bb(t_{j}))\to+\infty$. Thus, the left-hand side of \eqref{Bounded derivative} tends to infinity as well, and this implies that $T_{M}=+\infty$.
 \ \\
\textbf{Uniqueness.} Since the right-hand side of \eqref{eq: x(t)flowI} is locally Lipschitz
at every point, local uniqueness holds for every initial state, and therefore globally.\hfill 
\end{proof}

\begin{remark}
    Note that in the previous theorem some of the hypothesis in Assumption \ref{Assumptionp} have not been used. For instance, we do not use that $\dom(F)$ is closed neither the continuity assumption on the Bregman divergence.
\end{remark}

\begin{theorem}\label{T: Main resultp}
Assume that Assumption \ref{Assumptionp} holds, then $\mathcal{L}(\bb(t))$ is non-increasing and $\mathcal{L}(\bb(t))\to \mathcal{L}^*$ for $t\to+\infty$. In addition, $\left[\mathcal{L}(\bb(t))-\mathcal{L}^{*}\right]\in L^{1}([0,+\infty))$, and, for every ${z}\in \mathcal{S}$ and $t > 0$,
\begin{align*}
  \LLL(\bb(t))-\LLL^{*} \quad \leq \quad \frac{D_{F}(z,\bb_0)}{C t }.
\end{align*}
Moreover, $\lim_{t\rightarrow +\infty}\bb(t)=\bb_{\infty}$ where
\begin{align}\label{minbreg}
\left\{\bb_{\infty}\right\} \ = \ \argmin\limits_{z\in\mathcal{S}} \left\{ D_F(z,\beta_0)+\int_{0}^{+\infty}G(\bb(t))\scal{\nabla\mathcal{L}(\bb(t))}{z}dt\right\}.
\end{align}
\end{theorem}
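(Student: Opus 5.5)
The plan is to mirror the proof of Theorem~\ref{T: Main result}, with $C(r)$ replaced by the uniform constant $C$ from Assumption~\ref{A: D2p}. The bounded-away-from-singularity step (Lemma~\ref{L: Bounded away x(t)}) is not needed, because $G\geq C$ on all of $\mathrm{int}(\dom(F))$ and Theorem~\ref{T: Existencep} already provides a global solution in $\mathrm{int}(\dom(F))$. Fix $z\in\mathcal{S}$.

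\textbf{Rate and $L^1$ bound.} Integrating \eqref{eq: derivative DF1} and using the gradient inequality gives
\[
D_F(z,\bb(t))-D_F(z,\bb_0)\leq -C\int_0^t(\LLL(\bb(s))-\LLL^*)\,ds .
\]
Here $\LLL(\bb(s))\geq\LLL^*$ because $\bb(s)\in\dom(F)$. Nonnegativity of $D_F$ yields the $L^1$ bound. Since $t\mapsto\LLL(\bb(t))$ is non-increasing (Lemma~\ref{L: Loss-Decreasingp}), we get $t(\LLL(\bb(t))-\LLL^*)\leq D_F(z,\bb_0)/C$. Hence $\LLL(\bb(t))\to\LLL^*$.

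\textbf{Convergence of the trajectory.} By Lemma~\ref{L: Loss-Decreasingp} and Assumption~\ref{A: D4p}, $\bb$ is bounded, so cluster points exist. Any cluster point $\bb_*=\lim\bb(t_k)$ lies in $\dom(F)$, since $\dom(F)$ is closed. Continuity of $\LLL$ gives $\LLL(\bb_*)=\LLL^*$, so $\bb_*\in\mathcal{S}$.

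This is the main obstacle: unlike in Theorem~\ref{T: Main result}, $\bb_*$ may lie on the boundary of $\dom(F)$, where $D_F(\bb_*,\cdot)$ is not continuous. I would instead use the assumed equivalence $D_F(z,\bb_k)\to0\iff\bb_k\to z$ from Assumption~\ref{A: D2p}.

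Since $\bb_*\in\mathcal{S}$, $t\mapsto D_F(\bb_*,\bb(t))$ is non-increasing, hence has a limit. Along $t_k$ we have $\bb(t_k)\to\bb_*$, so $D_F(\bb_*,\bb(t_k))\to0$ by that equivalence. Therefore $D_F(\bb_*,\bb(t))\to0$ along the whole trajectory. Applying the equivalence again gives $\bb(t)\to\bb_*=:\bb_\infty$.

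\textbf{Variational characterization.} Because $\bb_\infty\in\mathcal{S}$, for every $z\in\mathcal{S}$ we have, from the integrated identity \eqref{eq: derivative DF1},
\[
\lim_{t\to\infty}D_F(z,\bb(t))=D_F(z,\bb_0)+\int_0^\infty G(\bb(s))\scal{\nabla\LLL(\bb(s))}{z-\bb(s)}\,ds .
\]
The integral converges because the integrand is nonpositive and the left-hand side is finite. Call this quantity $\Phi(z)$; it is $\geq0$.

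For $z=\bb_\infty$ the limit is $0$, by the previous step. So $\bb_\infty$ minimizes $\Phi$ over $\mathcal{S}$.

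For uniqueness, suppose $z\in\mathcal{S}$ satisfies $\Phi(z)=0$. Then $D_F(z,\bb(t))\to0$, so $\bb(t)\to z$ by the equivalence, and hence $z=\bb_\infty$.

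Finally, drop the term $-\int_0^\infty G\scal{\nabla\LLL}{\bb}\,ds$, which is independent of $z$, to obtain \eqref{minbreg}. One point needs checking here: the two integrals must be finite separately. They are, because $\bb$ is bounded and $G$, $\nabla\LLL$ are continuous on the closure of the trajectory. If needed, I would keep the combined form of the integrand instead.
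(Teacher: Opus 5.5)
Your proof follows the paper's argument essentially step by step. It uses the same ingredients:
\begin{itemize}
\item the integrated Bregman inequality with the uniform constant $C$;
\item boundedness of the trajectory via \ref{A: D4p};
\item closedness of $\dom(F)$ to place cluster points in $\mathcal{S}$;
\item the equivalence $D_F(z,\bb_k)\to 0 \iff \bb_k\to z$ from \ref{A: D2p} in place of continuity of $D_F(\bb_*,\cdot)$;
\item the limit function $z\mapsto\lim_{t}D_F(z,\bb(t))$, which vanishes at $\bb_*$.
\end{itemize}
Your uniqueness step (if $\Phi(z)=0$ then $\bb(t)\to z$, so $z=\bb_\infty$) is a genuine addition. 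The paper asserts $\{\bb_\infty\}=\argmin$ but only shows $\bb_\infty\in\argmin$.

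The one incorrect point is your justification for splitting the integral. There are two problems with it:
\begin{itemize}
\item A bounded integrand does not make an integral over $[0,+\infty)$ finite.
\item The closure of the trajectory contains $\bb_\infty$, which may lie on the boundary of $\dom(F)$. There $G$ is not defined and $\nabla\LLL$ is not assumed to exist.
\end{itemize}

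The right tool is the equation itself: $\int_0^t G(\bb(s))\nabla\LLL(\bb(s))\,ds=\nabla F(\bb_0)-\nabla F(\bb(t))$.

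If $\bb_\infty\in\operatorname{int}(\dom(F))$, this converges by continuity of $\nabla F$. Hence $\int_0^{+\infty}G(\bb)\scal{\nabla\LLL(\bb)}{z}\,dt$ is finite for every $z$. The integral against $\bb(t)$ is also finite, since it differs from the one against $\bb_\infty$ by your convergent combined integral with $z=\bb_\infty$.

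If $\bb_\infty$ is a boundary point, essential smoothness gives $\|\nabla F(\bb(t))\|\to+\infty$, and the split is no longer justified. Only the differences $\int_0^{+\infty}G(\bb)\scal{\nabla\LLL(\bb)}{z-z'}\,dt$ with $z,z'\in\mathcal{S}$ are guaranteed finite. That is still enough to define the argmin.

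So your fallback, keeping the combined integrand $\scal{\nabla\LLL(\bb)}{z-\bb}$, is the safe formulation. The paper drops the $z$-independent term without comment, so this caveat applies to its statement as well.
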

\vspace{2mm}
\begin{proof}
Let $z\in \mathcal{S}$. By the Fundamental Theorem of Calculus and inequality \eqref{eq: derivative DF1}, we have that, for all $t\geq 0$,
\begin{align}\label{breg_div1}
   D_{F}(z,\bb(t))-D_{F}(z,\bb_0)=&\int_{0}^{t}\frac{d}{ds}D_{F}(z,\bb(s)) \ ds\\
   \leq& -\int_{0}^{t}G(\bb(s))(\LLL(\bb(s))-\LLL^{*})ds \nonumber \\
   \leq& -C\int_{0}^{t}(\LLL(\bb(s))-\LLL^{*})ds
   \leq0\nonumber,
   \end{align} 
where the last inequality holds due to Assumption \ref{A: D2p}.
Since $D_{F}(z,\bb(t))$ is non-negative, from the above inequality we derive that, for every $t\geq 0$,
   \begin{equation*}
   \int_{0}^{t}(\LLL(\bb(s))-\LLL^{*})ds\leq \frac{D_{F}(z,\bb_0)}{C}.
  \end{equation*}
In particular, $t\mapsto \left[\LLL(\bb(t))-\LLL^{*}\right]$ belongs to $L^1\left(\left[0,+\infty\right)\right)$. Moreover, since $t\mapsto \LLL(\bb(t))$ is non-increasing by Lemma \ref{L: Loss-Decreasingp}, we have that $\left(\LLL(\bb(t))-\LLL^{*}\right)\cdot t \leq \int_{0}^t (\LLL(\bb(s))-\LLL^{*})ds\leq D_{F}(z,\bb_0)/C$ for every $t\geq 0$. From the latter, we obtain the rate stated in the theorem.\\

Next, we prove the convergence of the trajectory $(\bb(t))_{t>0}$. Let $\bar{z}$ be the point in Assumption \ref{A: D4p}. By Lemma \ref{L: Loss-Decreasingp}, $D_{F}(\bar{z},\bb(t))$ is (non-negative and) non-increasing, and therefore there exists $\lim\limits_{t\rightarrow +\infty}D_{F}(\bar{z},\bb(t))\in\RR$. Since $D_{F}(\bar{z},\cdot)$ is coercive by Assumption~\ref{A: D4p}, we have that $\bb(t)$ is bounded. Then, there exist a sequence $\left(t_{k}\right)_{k\geq0}$ and a point $\bb_*$ such that, for $k\to +\infty$, $t_k\to +\infty$ and $\bb(t_k)\to\bb_*$. Since $\bb(t_k)\in\mathrm{int}(\dom(F))$, and $\dom(F)$ is closed by assumption  \ref{A: D2p}, $\bb_*\in\dom(F)$. Moreover, since $\LLL$ is continuous and $\LLL(\bb(t))-\LLL^{*} \leq D_{F}(z,\bb_0)/(Ct)$, we deduce that 
\begin{equation*}\LLL(\bb_*)=\lim\limits_{k\to + \infty} \LLL(\bb(t_k))= \lim\limits_{t\to +\infty} \LLL(\bb(t))=\LLL^*;\end{equation*} 
   i.e., that $\bb_*\in\mathcal{S}$. We are going to show that indeed $\lim\limits_{t\rightarrow \infty}\bb(t)=\bb_*$. \\
Suppose that there exists another sequence $\left(t'_{n}\right)_{n=1}^{\infty}$ such that $t'_n\to +\infty$ and $\lim\limits_{n\to +\infty}\bb(t'_n) = \bar{\bb}$. With the same reasoning as above, we know that $\bar{\bb}$ belongs to $\mathcal{S}$. The existence of $\lim\limits_{t\to +\infty}D_{F}(\bb_*,\bb(t))$ yields
\begin{align}
 &\lim\limits_{n\to +\infty}D_{F}(\bb_*,\bb(t'_{n}))=\lim\limits_{t\to +\infty}D_{F}(\bb_*,\bb(t))=\lim\limits_{k\to +\infty}D_{F}(\bb_*,\bb(t_k))=0,\nonumber
\end{align}
where the last equality is given by Assumption \ref{A: D2p}. Again by Assumption \ref{A: D2p}, $\lim\limits_{n\to +\infty}D_{F}(\bb_*,\bb(t'_{n}))=0$ gives $\bar{\bb}=\bb_*$ and therefore $\lim\limits_{t\rightarrow +\infty}\bb(t)=\bb_*$.

\ \\
To conclude, note that, defining $\phi(z):=\lim\limits_{t\to +\infty} D_{F}(z,\bb(t))$, $\phi(z) \geq 0$ and $\phi(\beta_*)=0$. Then,
\begin{align}    \bb_*\in&\argmin\limits_{z\in\mathcal{S}} \ \phi(z). \nonumber
\end{align}
By \eqref{breg_div1} and \eqref{eq: derivative DF1}, 
\begin{align}
    \phi(z)\nonumber&=\lim\limits_{t\to +\infty} D_{F}(z,\bb(t))\nonumber= \lim\limits_{t\to +\infty} D_F(z,\bb_0)+\int_{0}^{t}G(\bb(s))\scal{\nabla\mathcal{L}(\bb(s))}{z-\bb(s)}ds\nonumber \\
    & = F(z)-F(\bb_0)-\scal{\nabla F(\bb_{0})}{z-\bb_0}+\int_{0}^{+\infty}G(\bb(s))\scal{\nabla\mathcal{L}(\bb(s))}{z-\bb(s)}ds.\nonumber
\end{align}
Discarding the terms that are constant with respect to $z$, we obtain the claim.\hfill
\end{proof}
\ \\
\ \\
An analogous result to Corollary \ref{Remark: Saving energy} is given by the following corollary. 
\begin{corollary}
Suppose that Assumptions \ref{Assumptionp} hold and let  $\bb$ be the unique solution of the dynamical system \eqref{eq: x(t)flow}. Let $X\in \RR^{d\times p}$ and let $\ell\colon\RR^d\to \RR$ be differentiable. Assume that $\LLL=\ell \circ X$ and $\mathcal{S} \subseteq \left\{\bb\in\RR^{p}\mid X\bb=y\right\}$ for some $y\in\RR^{d}$. Let $P: \ \RR^p \to \RR^p$ be the projection operator onto the kernel of $X$. Then, for every $t>0$,
\begin{align}
\frac{d}{dt}\left(P(\nabla F\left(\bb(t)\right)\right)&=0.
\nonumber
\end{align}
Moreover, $\lim\limits_{t\rightarrow+\infty}\bb(t)=\bb_{\infty}$ with
\begin{align*} \{ \bb_{\infty} \} \ = \ \argmin\limits_{z\in\mathcal{S}} \ D_F(z,\bb_0). 
\end{align*}
\end{corollary}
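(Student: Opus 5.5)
The plan is to repeat the argument of Corollary~\ref{Remark: Saving energy}, with Theorem~\ref{T: Main resultp} in place of Theorem~\ref{T: Main result}. Two points need extra care: the inclusion $\mathcal{S}\subseteq\{X\bb=y\}$ is now only an inclusion, and all computations must stay inside $\mathrm{int}(\dom F)$.

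\textbf{Conservation law.} By Theorem~\ref{T: Existencep}, the trajectory is a $C^1$ curve in $\mathrm{int}(\dom F)$, where $F$ is twice differentiable. The chain rule and \eqref{eq: x(t)flow} therefore give
\[
\frac{d}{dt}\nabla F(\bb(t))=\nabla^2F(\bb(t))\dot\bb(t)=-G(\bb(t))X^\top\nabla\ell(X\bb(t)).
\]
Since $\ran(X^\top)=\ker(X)^\perp$, we have $PX^\top=0$. The linearity of $P$ then yields $\frac{d}{dt}P(\nabla F(\bb(t)))=0$.

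\textbf{Characterization of the limit.} Theorem~\ref{T: Main resultp} gives $\bb(t)\to\bb_\infty$, and its proof identifies $\bb_\infty$ as the unique minimizer over $\mathcal{S}$ of
\[
\phi(z)=D_F(z,\bb_0)+\int_0^{+\infty}G(\bb(t))\scal{\nabla\mathcal{L}(\bb(t))}{z-\bb(t)}\,dt .
\]
It suffices to show that the integral is independent of $z\in\mathcal{S}$. Every $z\in\mathcal{S}$ satisfies $Xz=y$, so
\[
\scal{X^\top\nabla\ell(X\bb(t))}{z-\bb(t)}=\scal{\nabla\ell(X\bb(t))}{y-X\bb(t)},
\]
which does not depend on $z$. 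Hence $\phi(z)=D_F(z,\bb_0)+c$ on $\mathcal{S}$ for a constant $c$, and $\{\bb_\infty\}=\argmin_{z\in\mathcal{S}}D_F(z,\bb_0)$.

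\textbf{Main obstacle.} The delicate step is making sure the integral is a finite real constant, so that dropping it does not involve $\infty-\infty$. For this I would use \eqref{breg_div1}: the integral up to time $t$ equals $D_F(z,\bb(t))-D_F(z,\bb_0)$. The integrand is $\leq 0$ by convexity, because $G\geq 0$ and
\[
\scal{\nabla\mathcal{L}(\bb)}{z-\bb}\leq \mathcal{L}(z)-\mathcal{L}(\bb)\leq 0 .
\]
So the partial integrals are monotone and bounded below by $-D_F(z,\bb_0)$, which is finite for some $z\in\mathcal{S}$. The limit therefore exists in $\mathbb{R}$, and since it is $z$-independent, it is finite for every $z$.

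It also remains to check that $D_F(\cdot,\bb_0)$ has a unique minimizer on $\mathcal{S}$, possibly on the boundary of $\dom F$. This follows from the identity above: $\bb_\infty$ is the unique minimizer of $\phi$, and on $\mathcal{S}$ the function $\phi$ differs from $D_F(\cdot,\bb_0)$ only by the constant $c$.
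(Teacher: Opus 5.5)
Your proposal is correct and follows the same route as the paper, which simply transposes the proof of Corollary~\ref{Remark: Saving energy}. You use the conservation law coming from $PX^{\top}=0$, and the observation that for $z\in\mathcal{S}$ the integral term in the characterization of Theorem~\ref{T: Main resultp} reduces to $\int_0^{+\infty}G(\bb(t))\scal{\nabla\ell(X\bb(t))}{y-X\bb(t)}\,dt$, which does not depend on $z$. Your extra check that this integral converges to a finite constant, via the monotonicity of $D_F(z,\bb(t))$, is a worthwhile detail that the paper leaves implicit.
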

The point $\bb_{\infty}$ is the (unique) Bregman projection, with respect to the entropy $F$, of $\bb_0$ on $\mathcal{S}$ (see \cite{bauschke1997legendre}). In particular, $\bb_{\infty}$ belongs to the domain of the entropy $F$, which therefore can also implicitly enforce some constraints. For instance, 
the parameterization $\bb=\theta\odot\theta$ implicitly enforces the positivity of the solution: in this case the domain of $F$ is the positive orthant, see the case $L=2$ in equation~\eqref{eq:had}. 

\end{document}